\DeclarePairedDelimiterXPP\EV[1]{E}[]{}{

#1}
\newtheorem{theorem}{Theorem}[section]
\newtheorem{proposition}[theorem]{Proposition}
\newtheorem{lemma}[theorem]{Lemma}
\newtheorem{corollary}[theorem]{Corollary}
\theoremstyle{definition}
\newtheorem{defn}[theorem]{Definition}
\theoremstyle{remark}
\newcommand{\BE}{\mathbb E}
\newcommand{\BI}{\mathbb I}
\newcommand{\BP}{\mathbb P}
\newcommand{\BR}{\mathbb R}
\newcommand{\cA}{\mathcal{A}}
\newcommand{\cD}{\mathcal{D}}
\newcommand{\cN}{\mathcal{N}}
\newcommand{\cO}{\mathcal{O}}
\newcommand{\cX}{\mathcal{X}}
\newcommand{\eps}{\varepsilon}
\newcommand{\indep}{\perp\!\!\!\perp} 
\DeclareMathOperator{\Var}{Var}
\DeclareMathOperator{\Tr}{Tr}
\DeclareMathOperator{\poly}{poly}
\title{Better and Simpler Lower Bounds for Differentially Private Statistical Estimation}
\author{Shyam Narayanan\thanks{Massachusetts Institute of Technology. Email: \texttt{shyamsn@mit.edu}. Research supported by an NSF Graduate Fellowship and a Google Fellowship.}}
\date{\today}
\begin{document}

\maketitle

\vspace{-0.5cm}
\begin{abstract}
    We provide optimal lower bounds for two well-known parameter estimation (also known as statistical estimation) tasks in high dimensions with approximate differential privacy. First, we prove that for any $\alpha \le O(1)$, estimating the covariance of a Gaussian up to spectral error $\alpha$ requires $\tilde{\Omega}\left(\frac{d^{3/2}}{\alpha \eps} + \frac{d}{\alpha^2}\right)$ samples, which is tight up to logarithmic factors. This result improves over previous work which established this for $\alpha \le O\left(\frac{1}{\sqrt{d}}\right)$, and is also simpler than previous work. Next, we prove that estimating the mean of a heavy-tailed distribution with bounded $k$th moments requires $\tilde{\Omega}\left(\frac{d}{\alpha^{k/(k-1)} \eps} + \frac{d}{\alpha^2}\right)$ samples. Previous work for this problem was only able to establish this lower bound against pure differential privacy, or in the special case of $k = 2$.
    
    Our techniques follow the method of fingerprinting and are generally quite simple. Our lower bound for heavy-tailed estimation is based on a black-box reduction from privately estimating identity-covariance Gaussians. Our lower bound for covariance estimation utilizes a Bayesian approach to show that, under an Inverse Wishart prior distribution for the covariance matrix, no private estimator can be accurate even in expectation, without sufficiently many samples.
\end{abstract}

\section{Introduction} \label{sec:intro}

Mean and covariance estimation are two of the most fundamental tasks in algorithmic statistics. Simply put, the goals of these tasks, respectively, are: given i.i.d. samples $X_1, \dots, X_n$ from an unknown distribution $\mathcal{D}$, can we estimate the mean (resp., covariance) of the distribution? This question is especially worthy of investigation for data in high-dimensional Euclidean space, as this setting not only captures many real-world data problems but also has led to numerous theoretically and practically interesting algorithms. 

In many practical use cases, the data samples come from humans, and unfortunately, naive empirical mean and covariance estimates of the data may reveal highly sensitive information about an individual. Hence, one wishes to approximately compute the mean and covariance while protecting the privacy of the individuals that contribute the data. The goal of \emph{provably} protecting privacy in algorithm design led to the notion of differential privacy~\cite{DP}, which has become the gold standard of ensuring privacy both in theory and in practice. Formally, differential privacy is defined as follows.
\begin{defn} \cite{DP}
    Let $\cX$ be some domain (for instance, $\cX$ could be $\BR^d$), and let $0 \le \eps, \delta \le 1$ be parameters. A randomized algorithm $\cA: \cX^n \to \cO$ that takes in $n$ datapoints $X_1, X_2, \dots, X_n \in \cX$ and outputs some $o \in \cO$ is $(\eps, \delta)$-\emph{differentially private} ($(\eps, \delta)$-DP) if for any two datasets $\textbf{X} = (X_1, \dots, X_i, \dots X_n)$ and $\textbf{X}' = (X_1, \dots, X_i', \dots, X_n)$ that differ in only a single data point, and any subset $O \subset \cO$,
\[\BP\left[\cA(\textbf{X}') \in O\right] \le e^{\eps} \cdot \BP\left[\cA(\textbf{X}) \in O\right] + \delta.\]
\end{defn}
Intuitively, if an external adversary sees the output of an $(\eps, \delta)$-DP algorithm $\cA$, then with at most $\delta$ failure probability, it is impossible to distinguish between the $i^{\text{th}}$ data point being either some $X_i$ or some other $X_i'$ with more than an $\eps$ advantage, even if the adversary had unbounded computational power. Hence, differential privacy is an information theoretic way of masking any individual data point, and keeping each data point safe from adversaries. The $\delta$ additive error can sometimes result in a complete leakage of a data point (for instance, an algorithm that outputs $X_1$ with $\delta$ probability and nothing otherwise is $(0, \delta)$-DP). Therefore, in differential privacy, one wishes for $\delta$ to be very small: usually one wishes for $\delta= n^{- \omega(1)}$, i.e., $\delta$ decays \emph{super-polynomially} with the dataset size. In fact, one may even wish for $\delta = 0$: this is often called \emph{pure differential privacy} (pure-DP), as opposed to \emph{approximate differential privacy} (approximate-DP) when $\delta > 0$.

\medskip

From the perspective of differential privacy, algorithmic statistics has enjoyed a significant amount of work over the past several years, with numerous papers studying differentially private mean~\cite{KarwaV18,KamathLSU19,BunKSW19,KamathSU20,AdenAliAK21,LiuKKO21,BrownGSUZ21,LevySAKKMS21,HuangLY21,HopkinsKM22,NarayananME22,TsfadiaCKMS22,CummingsFMT22,HopkinsKMN23,DuchiHK23,BrownHS23} and covariance~\cite{KarwaV18,AminDKMV19,KamathLSU19,BunKSW19,AdenAliAK21,LiuKO22,KamathMSSU22,KothariMV22,AshtianiL22,TsfadiaCKMS22,KamathMS22,DongLY22,HopkinsKMN23,AlabiKTVZ23} estimation in high dimensions. Much of this work has focused on the setting where the samples are drawn i.i.d. from a Gaussian distribution. This has led to optimal sample complexity bounds for estimating both identity-covariance Gaussians and arbitrary Gaussians~\cite{KamathLSU19,AdenAliAK21,KamathMS22} in total variation distance, as well as matching polynomial-time algorithms~\cite{KamathLSU19,AshtianiL22,HopkinsKMN23}. Recently, there has also been work on private ``covariance-aware mean estimation'', where one wishes to estimate the mean of an unknown-covariance Gaussian: for this problem, we have optimal sample complexity bounds~\cite{BrownGSUZ21} and nearly matching efficient algorithms~\cite{DuchiHK23, BrownHS23}. Other problems that have been studied include private mean estimation for heavy-tailed distributions~\cite{KamathLSU19,HopkinsKM22} and private mean/covariance estimation for arbitrary bounded data~\cite{AminDKMV19, LevySAKKMS21, HuangLY21, NarayananME22, DongLY22}. 
In addition to being an extremely fundamental problem, private mean estimation has proven to be a valuable subroutine in numerous other private algorithms, most notably in optimization tasks requiring private stochastic gradient descent (e.g.,~\cite{AbadiCGMMTZ16, BassilyFTT19}).

\medskip

Despite the large body of work on mean and covariance estimation, we still do not have a full understanding of these problems. One such problem is heavy-tailed mean estimation with bounded $k^{\text{th}}$ moments. Namely, we are promised that for some fixed constant $k \ge 2$, the (high-dimensional) data comes from a distribution $\mathcal{D}$ with unknown mean $\mu$, but with bounded $k^{\text{th}}$ moment around $\mu$ in every direction, i.e., $\BE_{X \sim \mathcal{D}} \left|\langle X-\mu, v \rangle\right|^k \le O(1)$ for every unit vector $v \in \BR^d$. We wish to privately learn $\hat{\mu}$ such that $\|\hat{\mu}-\mu\|_2 \le \alpha.$
The second is that while we understand the complexity of private Gaussian covariance estimation up to small Frobenius error (which corresponds to the notation of total variation distance), we do not yet understand the complexity of estimation up to spectral error. In Frobenius error, given samples from $\cN(\mu, \Sigma)$, we wish to privately learn some $\hat{\Sigma}$ such that $\|\Sigma^{-1/2} \hat{\Sigma} \Sigma^{-1/2} - I\|_F \le \alpha,$ whereas in spectral error, we wish to privately learn $\hat{\Sigma}$ such that $\|\Sigma^{-1/2} \hat{\Sigma} \Sigma^{-1/2} - I\|_{op} \le \alpha,$ or equivalently, $(1-\alpha) \Sigma \preccurlyeq \hat{\Sigma} \preccurlyeq (1+\alpha) \Sigma$, where $\preccurlyeq$ represents the Loewner ordering.

\subsection{This work}

In this work, we prove optimal lower bounds for both private heavy-tailed mean estimation and private Gaussian covariance estimation in spectral error, matching known upper bounds.
Our lower bounds are against approximate-DP algorithms (and thus automatically hold against pure-DP algorithms).
We now state our lower bounds, starting with our result on covariance estimation in spectral error.

\begin{theorem}[Informal, see \Cref{thm:covariance-formal}] \label{thm:covariance}
    For any $\alpha, \eps \le O(1),$ and any $\delta \le (\frac{\alpha \cdot \eps}{d})^{O(1)}$, any $(\eps, \delta)$-DP algorithm that solves covariance estimation up to spectral error $\alpha$ for Gaussians in $d$ dimensions requires sample complexity
\[n \ge \tilde{\Omega}\bigg(\underbrace{\frac{d}{\alpha^2}}_{\clap{\scriptsize required even for non-private algorithms}} + \frac{d^{3/2}}{\alpha \eps}\bigg).\]
\end{theorem}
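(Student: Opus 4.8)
The information-theoretic term $\tilde\Omega(d/\alpha^2)$ is classical and not the interesting part: it follows from a standard two-point/packing argument comparing $\cN(0,I_d)$ with $\cN(0,\Sigma)$ for a random operator-norm-$\Theta(\alpha)$ perturbation $\Sigma$ of the identity, using that against a sufficiently flat such prior one cannot beat the empirical covariance (whose operator error is $\Theta(\sqrt{d/n})$ once $n\gtrsim d$). So I would concentrate on the private term $n\ge\tilde\Omega\big(d^{3/2}/(\alpha\eps)\big)$, which I would establish by fingerprinting in its Bayesian form.

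The plan is to put an Inverse Wishart prior $\Sigma\sim\mathrm{IW}(I_d,\nu)$ on the covariance, draw $X_1,\dots,X_n$ i.i.d.\ from $\cN(0,\Sigma)$ given $\Sigma$, and feed them to the purported $(\eps,\delta)$-DP estimator to obtain $\hat\Sigma$. Two features of this prior drive (and simplify) the argument. First, \emph{conjugacy}: the posterior of $\Sigma$ given any subset of the samples is again Inverse Wishart, with shape matrix $I+\sum_i X_iX_i^\top$ (sum over the subset) and correspondingly more degrees of freedom, so every ``posterior-mean correction'' appearing in the fingerprinting calculation is available in closed form. Second, \emph{calibration}: one picks $\nu$ so that the perturbation $\Sigma-\EV{\Sigma}$ has operator norm $\Theta(1)$ — hence Frobenius norm $\Theta(\sqrt d)$ — while $\Sigma$ is, with constant probability, well conditioned. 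This places the instance \emph{outside} the ``Frobenius regime'' $\|\Sigma-I\|_F=O(1)$, which is where earlier work was confined and which corresponds precisely to $\alpha=O(1/\sqrt d)$; working outside it is exactly what should let the bound hold for all $\alpha\le O(1)$, at the price of genuinely having to handle operator-norm (not Frobenius) loss.

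Writing $X_{-i}=(X_j)_{j\ne i}$ and $\Sigma_{-i}:=\EV{\Sigma\given X_{-i}}$ for the leave-one-out posterior mean, consider the aggregate correlation
\[
Z\;:=\;\sum_{i=1}^{n}\;\EV{\big\langle M(\hat\Sigma),\; X_iX_i^\top-\Sigma_{-i}\big\rangle},
\]
for a matrix-valued test function $M(\cdot)$ of the output, morally $M(\hat\Sigma)\propto\hat\Sigma-\Sigma_{-i}$. The argument has the usual two halves. \emph{(a) Accuracy forces $Z$ to be large.} Via a Stein-type integration-by-parts identity for the Gaussian family in the parameter $\Sigma$ (whose score is $\tfrac12(\Sigma^{-1}XX^\top\Sigma^{-1}-\Sigma^{-1})$) together with the closed-form posteriors, rewrite $Z$ in terms of the Bayes risk of $\hat\Sigma$ and an explicit prior-dependent term, and conclude that if $\cA$ solves spectral covariance estimation to error $\alpha$ — even only in expectation over the prior — then $Z$ is at least the corresponding ``total information'' ($\asymp d^2$) of the Inverse Wishart prior. \emph{(b) Privacy forces $Z$ to be small.} Condition on the good event that $\Sigma$ is well conditioned and every $\|X_i\|_2\lesssim\sqrt{d\log(n/\delta)}$, which fails with probability only $\poly(\delta)$ — this is the role of the hypothesis $\delta\le(\alpha\eps/d)^{O(1)}$. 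Resampling $X_i$ from the leave-one-out posterior predictive makes the resampled analogue of each summand vanish in expectation, so $(\eps,\delta)$-DP bounds the $i$-th summand by $\tilde O(\eps)\cdot\sqrt{\EV{\langle M(\hat\Sigma),X_iX_i^\top-\Sigma\rangle^2}}+\poly(\delta)$, with the variance factor of order $\|M(\hat\Sigma)\|_F\le\tilde O(\alpha\sqrt d)$ on the good event (operator error $\alpha$ implies Frobenius error $O(\alpha\sqrt d)$). Hence $Z\le\tilde O(n\,\eps\,\alpha\sqrt d)$, and balancing this against the lower bound from (a) — with the prior-information quantity pinned down by a direct Inverse Wishart moment computation — yields $n\ge\tilde\Omega\big(d^{3/2}/(\alpha\eps)\big)$.

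The main obstacle, I expect, is part (a) executed against operator-norm loss and intertwined with the calibration of $\nu$: the perturbation must be large (operator norm $\Theta(1)$, not $\Theta(\alpha)$) so that no fixed matrix is an accurate output, yet structured enough that the fingerprinting identity and its prior correction remain tractable. Operator norm is precisely what defeats the naive reduction to the Frobenius-error lower bound — which would both restrict to $\alpha=O(1/\sqrt d)$ and only yield the (larger) Frobenius complexity — so the crux is to show that $\alpha$-accuracy in operator norm still compels the estimator to resolve $\Omega(d)$ independent directions of the perturbation. The fact that the relevant per-sample sensitivity is then $\tilde O(\alpha\sqrt d)$ rather than the $\tilde O(\alpha)$ governing Frobenius-error estimation is the quantitative counterpart of the same phenomenon, and is why the exponent of $d$ comes out as $3/2$ here rather than $2$, as befits the easier operator-norm problem. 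The remaining ingredients — Gaussian truncation, excising the ill-conditioned event, and the Inverse Wishart moment estimates — I expect to be routine.
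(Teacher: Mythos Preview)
Your proposal is essentially the paper's approach: Inverse Wishart prior, fingerprinting via matrix inner products, conjugacy for the posterior, and the per-sample sensitivity $O(\alpha\sqrt d)$ coming from the operator-to-Frobenius conversion. The arithmetic you sketch ($Z\gtrsim d^2$ versus $Z\lesssim n\eps\alpha\sqrt d$) is exactly what the paper balances.

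Two simplifications in the paper are worth flagging, and your diagnosis of the ``main obstacle'' is slightly off. First, the statistic is simply $Z_i=\langle M(X)-\Sigma,\,X_iX_i^\top-\Sigma\rangle$, centered at the true $\Sigma$ rather than the leave-one-out posterior mean $\Sigma_{-i}$; the resampled version has mean zero for the trivial reason that $\BE[X_iX_i^\top\mid\Sigma]=\Sigma$ and $X_i\indep M(X_{\sim i})\mid\Sigma$, so no posterior-predictive machinery is needed. Second, part (a) uses no Stein identity or score: one writes $\sum_i Z_i=n\,\langle M(X)-\Sigma,\hat\Sigma-\Sigma\rangle$ with $\hat\Sigma=\tfrac1n\sum_i X_iX_i^\top$, splits off $\BE\|\hat\Sigma-\Sigma\|_F^2\asymp d^2/n$, and bounds the cross term $\BE\langle M(X)-\hat\Sigma,\Sigma-\hat\Sigma\rangle$ by Cauchy--Schwarz after replacing $\Sigma$ by $\BE[\Sigma\mid X]$; conjugacy enters only to show $\BE\|\BE[\Sigma\mid X]-\hat\Sigma\|_F^2=O(d^3/n^2)$. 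Finally, the paper \emph{does} make the ``naive reduction'' to Frobenius error that you say is defeated: it sets $\gamma=\alpha\sqrt d$ and proves the Frobenius lower bound $n\ge\Omega(d^2/(\gamma\eps))$ for all $\gamma\le c\sqrt d$. The role of the Inverse Wishart prior (with $\nu=2d$, so eigenvalues $\Theta(1)$) is precisely to push the admissible range of $\gamma$ from the previously known $\gamma=O(1)$ up to $\gamma=O(\sqrt d)$ --- there is no separate operator-norm argument, and the ``crux'' you anticipate is handled by this one calibration choice together with the $\hat\Sigma$-pivot.
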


\Cref{thm:covariance} improves over the best-known lower bound of~\cite{KamathMS22}, which had a matching sample complexity bound but only held for $\alpha \le O(\frac{1}{\sqrt{d}})$.
Moreover, \Cref{thm:covariance} matches known algorithms of~\cite{KamathLSU19, BrownHS23}, up to logarithmic factors in $d, \frac{1}{\alpha}, \frac{1}{\eps}, \frac{1}{\delta}$. Hence, up to logarithmic factors, this completes the picture for private Gaussian covariance estimation up to spectral error. We also remark that our proof generalizes Theorem 1.1 in~\cite{KamathMS22} while also being simpler.

Next, we state our lower bound for heavy-tailed mean estimation.
\begin{theorem}[Informal, see~\Cref{thm:heavy-tailed-formal}] \label{thm:heavy-tailed}
    For any $\alpha, \eps \le O(1),$ and any $\delta \le (\frac{\alpha \cdot \eps}{d})^{O(1)}$, any $(\eps, \delta)$-DP algorithm that solves mean estimation up to error $\alpha$ for heavy-tailed distributions with bounded $k^{\text{th}}$ moment in $d$ dimensions requires sample complexity
\[n \ge \tilde{\Omega}\bigg(\underbrace{\frac{d}{\alpha^2}}_{\clap{\scriptsize required even for non-private algorithms}} + \frac{d}{\alpha^{k/(k-1)} \eps}\bigg).\]
\end{theorem}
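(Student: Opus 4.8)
\emph{Plan.} I would prove the two terms separately, with only the second requiring privacy. The $\tilde\Omega(d/\alpha^2)$ term is purely statistical: a spherical Gaussian $\cN(\mu,\sigma^2 I_d)$ with $\sigma$ a small enough absolute constant has $\BE|\langle X-\mu,v\rangle|^k=\sigma^k\BE|\cN(0,1)|^k=O(1)$ in every unit direction $v$ and every fixed $k$, so it lies in the promised class, and estimating its mean to $\ell_2$-error $\alpha$ requires $\Omega(\sigma^2 d/\alpha^2)=\Omega(d/\alpha^2)$ samples even with no privacy constraint (a standard Fano/packing bound over an $\alpha$-net of means in a radius-$\Theta(\alpha)$ ball). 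The real work is the private term $\tilde\Omega(\tfrac{d}{\alpha^{k/(k-1)}\eps})$, which I would obtain by a black-box reduction \emph{from} $(\eps,\delta)$-DP identity-covariance Gaussian mean estimation. The only fact I need about the latter is its coarsest form: any $(\eps,\delta)$-DP algorithm that, from $m$ samples of $\cN(\nu,I_d)$ with $\|\nu\|_2\le 1$, recovers $\hat\nu$ with $\|\hat\nu-\nu\|_2\le c_0$ (a suitable small constant) with probability $\ge 0.6$ must use $m\ge\tilde\Omega(d/\eps)$ samples (for $\delta$ polynomially small) --- the standard fingerprinting lower bound, used only at constant target error.

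\emph{The reduction.} Pick parameters $R\ge 1$ and $q\in(0,1)$, to be chosen so that $qR^k\asymp_k 1$ and $qR\asymp_k\alpha$ (which forces $R\asymp\alpha^{-1/(k-1)}$ and $q\asymp\alpha^{k/(k-1)}$). Given a promised-correct $(\eps,\delta)$-DP heavy-tailed estimator $\cA$ that uses $n$ samples and achieves error $\alpha$, and given $m:=\lceil 2qn\rceil$ i.i.d.\ samples $Y_1,\dots,Y_m\sim\cN(\nu,I_d)$ with $\|\nu\|_2\le 1$, I would produce $n$ i.i.d.\ ``heavy-tailed'' samples $X_1,\dots,X_n$ as follows: independently for each $i$, set $X_i=0$ with probability $1-q$, and $X_i=R\cdot Y_j$ with probability $q$, where $Y_j$ is the next so-far-unused Gaussian sample (aborting --- which happens with probability $e^{-\Omega(qn)}$ by a Chernoff bound --- if more than $m$ are requested). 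Then each $X_i\sim\cD_\mu:=(1-q)\delta_0+q\,\cN(R\nu,R^2 I_d)$ (one may replace $\delta_0$ by $\cN(0,\eta^2 I_d)$, $\eta$ tiny, to get a density, without affecting anything), and $\cD_\mu$ has mean $\mu:=qR\nu$. Two observations then close the reduction: the map $(Y_j)\mapsto(X_i)$ uses only data-independent internal randomness and, once that is fixed, sends neighboring inputs to neighboring inputs (each $Y_j$ feeds $\le 1$ of the $X_i$), so $\hat\nu:=\tfrac{1}{qR}\cA(X_1,\dots,X_n)$ is $(\eps,\delta)$-DP as a function of $(Y_1,\dots,Y_m)$; and $\|\cA(X)-\mu\|_2\le\alpha$ gives $\|\hat\nu-\nu\|_2=\tfrac{1}{qR}\|\cA(X)-\mu\|_2\le\tfrac{\alpha}{qR}\asymp_k 1$, which I would arrange to be $\le c_0$. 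Hence $\hat\nu$ is an $(\eps,\delta)$-DP Gaussian-mean estimator succeeding with probability $\ge 0.6$ on $m$ samples, so $m\ge\tilde\Omega(d/\eps)$ by the cited bound, i.e.\ $n\ge\tilde\Omega(\tfrac{d}{q\eps})=\tilde\Omega(\tfrac{d}{\alpha^{k/(k-1)}\eps})$.

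\emph{Main obstacle.} Everything hinges on choosing $(q,R)$ so that three competing demands hold at once: $\cD_\mu$ genuinely has bounded $k$th moment; the induced Gaussian problem stays non-trivial (target error a constant strictly below the prior radius $1$); and each heavy-tailed sample consumes a real Gaussian sample only with probability $q$ --- the factor $1/q$ is exactly how the bound beats $\tilde\Omega(d/\eps)$. The moment check is the crux: for any unit $v$, with $g\sim\cN(0,I_d)$,
\[
\BE_{X\sim\cD_\mu}\big|\langle X-\mu,v\rangle\big|^k\;\le\;(1-q)\big|\langle\mu,v\rangle\big|^k+q\,\BE\big|R(1-q)\langle\nu,v\rangle+R\langle g,v\rangle\big|^k\;\lesssim_k\;qR^k\big(1+\|\nu\|_2^k\big)\;\lesssim_k\;qR^k,
\]
using $\|\mu\|_2=qR\|\nu\|_2\le qR$ and $\|\nu\|_2\le 1$; so $qR^k=O_k(1)$ is enough, while non-triviality needs $qR\gtrsim\alpha$. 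Both hold with $R=\Theta_k(\alpha^{-1/(k-1)})$ and $q=\Theta_k(\alpha^{k/(k-1)})$ --- here $R\ge 1$ is precisely the hypothesis $\alpha\le O_k(1)$, and $q\le 1$ for small $\alpha$. It is worth noting that this balance is forced: making $R$ larger violates the moment bound, and making it smaller makes $\tfrac{\alpha}{qR}$ exceed the prior radius, so the Gaussian sub-problem becomes vacuous. The remaining points are routine: verifying the $X_i$ are i.i.d.\ from $\cD_\mu$ up to $e^{-\Omega(qn)}$ total-variation error despite the random stopping; absorbing the $O_k(1)$ moment constant into the promised one by rescaling $\alpha$ by a constant; and checking that $\delta\le(\alpha\eps/d)^{O(1)}$ meets the hypothesis of the Gaussian lower bound at $m=\Theta(d/\eps)$ samples.
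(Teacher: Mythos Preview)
Your proposal is correct and follows essentially the same approach as the paper: the same mixture $\cD_\mu=(1-q)\delta_0+q\,\cN(R\nu,R^2 I)$ with $q\asymp\alpha^{k/(k-1)}$ and $R\asymp\alpha^{-1/(k-1)}$, the same $k$th-moment verification, and the same padding reduction (each heavy-tailed sample consumes a real Gaussian sample with probability $q$) to the $(\eps,\delta)$-DP Gaussian mean-estimation lower bound. The only minor difference is in how that Gaussian bound is invoked: you cite it directly at constant target error under the constraint $\|\nu\|_2\le 1$, whereas the paper introduces an auxiliary polylogarithmic parameter $T$, targets error $1/T$, and uses a two-stage argument combining the known upper and lower bounds (its Lemma~5.3) to handle the bounded-mean restriction---your route is slightly more streamlined, but the two arguments are otherwise the same.
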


Theorem~\ref{thm:heavy-tailed} improves on the best-known lower bound, which had a matching sample complexity bound but only held for pure-DP algorithms~\cite{BarberD14, KamathSU20}. As pure-DP is more stringent than approximate-DP, it is more difficult to prove approximate-DP lower bounds: a matching approximate-DP lower bound is only known for Gaussian distributions or when $k=2$~\cite{KamathLSU19, KamathMS22}. Moreover, \Cref{thm:heavy-tailed} matches a known algorithm (upper bound) of~\cite{KamathSU20}, up to logarithmic factors in $d, \frac{1}{\alpha}, \frac{1}{\eps}, \frac{1}{\delta}$. Hence, up to logarithmic factors, this essentially completes the picture for private heavy-tailed mean estimation.

\paragraph{Implications:}
Theorem~\ref{thm:covariance} leads to two important implications. The first is a dimension-based separation between the sample complexity of robustness and privacy. Specifically, it is known that \emph{robustly} learning the covariance of a Gaussian up to spectral error $\alpha$ only requires $O(\frac{d}{\alpha^2})$ samples, though all known algorithms that run in polynomial time use $\Omega(d^2)$ samples (e.g., see~\cite[Section 6]{DiakonikolasKS17}). Hence, the sample complexity of approximate-DP spectral covariance estimation has a greater polynomial dependence on the dimension ($d^{3/2}$) than the sample complexity of robust spectral covariance estimation ($d$). To our knowledge, this is the first such \emph{dimension-based} separation known for a statistical estimation problem, where the sample complexity for robustness (ignoring runtime constraints) is \emph{strictly smaller} than the sample complexity for approximate differential privacy.

Second, Theorem~\ref{thm:covariance} leads to improved lower bounds for private empirical covariance estimation of arbitrary bounded data. Given data points $X = \{X_1, \dots, X_n\}$ that are promised to lie in a $d$-dimensional ball of radius $1$, one can privately estimate the empirical covariance $\hat{\Sigma}$ of $X$, up to error $\|\hat{\Sigma}-\Sigma\|_F \lesssim \min\left(\frac{d}{n}, \frac{d^{1/4}}{\sqrt{n}}\right),$ ignoring polynomial factors in $\eps, \log \frac{1}{\delta}$~\cite{NikolovTZ13,DworkNT15,DongLY22}. The best corresponding lower bound for the Frobenius error is a matching $\Omega\left(\frac{d}{n}\right)$ when $d \le \sqrt{n}$~\cite{KasiviswanathanRSU10}, but is only $\frac{1}{\sqrt{n}}$ for $\sqrt{n} \le d \le n$~\cite{KasiviswanathanRSU10} and $\frac{\sqrt{d}}{n}$ for $n \le d \le n^2$~\cite{KamathLSU19}  (see \cite[Figure 1]{DongLY22} for a depiction of both the upper and lower bounds). Our proof of Theorem~\ref{thm:covariance} can be used to improve the lower bound to a tight $\Omega\left(\frac{d}{n}\right)$ for $\sqrt{n} \le d \le n^{2/3}$, and an improved $\Omega(\frac{1}{n^{1/3}})$ for $n^{2/3} \le d \le n^{4/3}$.
The upper and lower bounds still do not match when $n^{2/3} \le d \le n^2$, and a natural follow-up question is to close this gap.

\subsection{Additional related work}

Our techniques are based on the technique of \emph{fingerprinting lower bounds} for differential privacy, which was first used in a work of Bun et al.~\cite{BunUV14}. Since then, there have been various other privacy lower bounds based on the fingerprinting technique~\cite{HardtU14,SteinkeU15,DworkSSUV15,SteinkeU16,SteinkeU17,BunSU19,KamathLSU19,CaiWZ20,NarayananME22,KamathMS22,CaiWZ23,PeterTU23,survey}. While fingerprinting lower bounds are mainly used in the approximate-DP setting, it is more common to use \emph{packing lower bounds} in the pure-DP setting (see, e.g.,~\cite{HardtT10}).

The most relevant works to ours are perhaps those of~\cite{KamathLSU19, KamathMS22}, which prove lower bounds for mean and covariance estimation of Gaussians with approximate DP. The works of \cite{KamathMS22, CaiWZ23} give a technique for lower bounds for general exponential families, although \cite{CaiWZ23} specifically considers other statistical problems and does not consider mean or covariance estimation of distributions. The main ``score attack'' statistic they use in their lower bound is also different from ours.

\subsection{Roadmap}

In \Cref{sec:overview}, we give a technical overview of the proofs of Theorems~\ref{thm:covariance} and~\ref{thm:heavy-tailed}. In \Cref{sec:prelim}, we note some useful facts and concentration bounds. In \Cref{sec:covariance}, we prove \Cref{thm:covariance}. In \Cref{sec:heavy-tailed}, we prove \Cref{thm:heavy-tailed}. Finally, in \Cref{sec:empirical}, we explain how our proof of \Cref{thm:covariance} implies an improved lower bound for empirical covariance estimation.

\section{Proof Overview} \label{sec:overview}

\paragraph{Fingerprinting overview:} We first describe a general approach explaining fingerprinting lower bounds. This approach mirrors the other fingerprinting lower bounds in private statistical estimation~\cite{KamathLSU19, survey, KamathMS22}.

Suppose we are trying to estimate a parameter $\theta$ that characterizes a distribution $\mathcal{D}_\theta$. (For covariance estimation, $\theta = \Sigma$ and $\mathcal{D}_\theta = \cN(0, \Sigma)$.)
We fix a $(\eps, \delta)$-DP mechanism $M$ with input $X_1, \dots, X_n \sim \mathcal{D}_\theta$ and with output some estimate $\hat{\theta}$. Consider drawing i.i.d. samples $X_1, \dots, X_n \sim \mathcal{D}_\theta$ and fresh i.i.d. samples $X_1', \dots, X_n'\sim \mathcal{D}_\theta$, and for each index $i \in [n]$, define the statistics
\begin{equation} \label{eq:main-statistic}
    Z_i := \langle f(X_1, \dots, X_i, \dots, X_n, \theta), g(X_i, \theta) \rangle \hspace{0.5cm} \text{and} \hspace{0.5cm} Z_i' := \langle f(X_1, \dots, X_i', \dots, X_n, \theta), g(X_i, \theta) \rangle,
\end{equation}
for some fixed functions $f, g$, where $f$ will depend only on $M(X_1, \dots, X_n)$ and $\theta$.
The idea is that $Z_i'$ is the inner product of two independent quantities (since $X_i$ is not in the set $\{X_1, \dots, X_i', \dots, X_n\}$), which makes it easier to bound the mean and variance of $Z_i'$. Moreover, if $M$ is a private algorithm, then the distribution of $Z_i$ and $Z_i'$, even for \emph{fixed} samples $\{X_i\}, \{X_i'\}$ and $\theta$, are close, which means the overall distribution of $Z_i$ and $Z_i'$ are similar after removing the conditioning on $\{X_i\}, \{X_i'\}$ and $\theta$. Hence, we can also bound the distribution of $Z_i$, and thus bound $\BE[Z_i]$.

Conversely, we will show that if $M$ is a reasonably accurate estimator, then $\BE\left[\sum_{i=1}^n Z_i\right]$ will have to be large compared to our bounds on each $\BE[Z_i]$, unless $n$ is sufficiently large. To actually prove this, we first carefully choose the functions $f$ and $g$ as well as the prior distribution on the parameter $\theta$. Then, we prove a ``fingerprinting'' lemma, which proves if $X_1, \dots, X_n \sim \mathcal{D}_\theta$, then either $M(X_1, \dots, X_n)$ is not a good estimate for $\theta$ with reasonable probability, or $\BE\left[\sum_{i=1}^n Z_i\right]$ is large. The main technical difficulties lie in choosing the functions and distributions, and then proving the fingerprinting lemma.

\paragraph{Spectral covariance estimation:} We will prove a stronger statement: namely, for any $\alpha \le O(\sqrt{d})$, there exists a distribution $\mathcal{P}$ on the covariance $\Sigma$ with the following two properties.
\begin{enumerate}
    \item With very high probability, $\Sigma \sim \mathcal{P}$ has all eigenvalues $\Theta(1)$.
    \item For any $(\eps, \delta)$-DP algorithm $M(X_1, \dots, X_n),$ if $\Sigma \sim \mathcal{P}$ and $X_1, \dots, X_n \sim \cN(0, \Sigma)$, where $\BE[\|M(X_1, \dots, X_n)-\Sigma\|_F^2] \le \alpha^2$, then we must have $n \ge \Omega\big(\frac{d^2}{\alpha \eps}\big)$.
\end{enumerate}

By setting $\alpha' = \frac{\alpha}{\sqrt{d}}$ (so $\frac{d^2}{\alpha \eps} 
= \frac{d^{3/2}}{\alpha' \eps}$), this implies that we cannot have $\|M(X_1, \dots, X_n)-\Sigma\|_{op} \le \alpha'$ with very high probability, and since all eigenvalues of $\Sigma$ are $\Theta(1)$, this implies our desired result. This holds for any $\alpha \le O(\sqrt{d})$, and hence for any $\alpha' \le O(1)$.

The choices of $f, g$ in \eqref{eq:main-statistic} will be quite simple: we choose $f(X_1, \dots, X_n, \Sigma) = M(X_1, \dots, X_n)-\Sigma$ and $g(X_i) = X_i X_i^\top - \Sigma$, so
\[Z_i := \langle M(X_1, \dots, X_n)-\Sigma, X_i X_i^\top - \Sigma \rangle \hspace{0.5cm} \text{and} \hspace{0.5cm} Z_i' =  \langle M(X_1, \dots, X_i', \dots, X_n)-\Sigma, X_i X_i^\top - \Sigma\rangle.\]
Using the fact that $X_i X_i^\top$ is an unbiased estimator for $\Sigma$, a simple calculation shows that $\BE[Z_i'] = 0$. Moreover, assuming $M$ is a reasonably good estimator of $\Sigma$, we can show $\Var(Z_i') \le O(\alpha^2)$. Given these, $(\eps, \delta)$-DP will imply for reasonably small $\delta$ that $\BE[Z_i] \le O(\alpha \eps)$ for all $i$. Hence, $\BE[\sum_{i=1}^n Z_i] \le O(n \cdot \alpha \eps)$ if $M$ is differentially private and reasonably accurate.

Next, we show a lower bound on $\BE[\sum_{i=1}^n Z_i]$, assuming $M$ is a sufficiently accurate estimator. This lower bound does not utilize any privacy constraints. Note that $\BE[\sum_{i=1}^n Z_i] = n \cdot \BE[\langle M(X_1, \dots, X_n)-\Sigma, \hat{\Sigma}-\Sigma \rangle]$, where $\hat{\Sigma} = \frac{1}{n} \sum_{i=1}^n X_i X_i^\top$ is the empirical covariance. So, we want to show this quantity is larger than $O(n \cdot \alpha \eps)$, which contradicts our above bound, unless either $n \ge \Omega\left(\frac{d^2}{\alpha \eps}\right)$ or $\|M(X_1, \dots, X_n)-\Sigma\|_F > \alpha$ holds with reasonable probability.

We can rewrite our desired quantity as 
\begin{align} \label{eq:sum_zi}
    \BE\left[\sum_{i=1}^n Z_i\right] = n \cdot \left(\BE \left\langle M(X_1, \dots, X_n)-\hat{\Sigma}, \hat{\Sigma}-\Sigma\right\rangle + \BE\left[\|\hat{\Sigma}-\Sigma\|_F^2\right]\right).
\end{align}
It is well-known that $\BE\left[\|\hat{\Sigma}-\Sigma\|_F^2\right] = \Theta(\frac{d^2}{n})$. Also, we can write
\begin{align}
    \left|\BE \left\langle M(X_1, \dots, X_n)-\hat{\Sigma}, \hat{\Sigma}-\Sigma\right\rangle\right| 
    &= \left|\BE \left\langle M(X_1, \dots, X_n)-\hat{\Sigma}, \hat{\Sigma}-\BE[\Sigma|X_1, \dots, X_n]\right\rangle\right| \nonumber \\
    &\le \sqrt{\BE\|M(X_1, \dots, X_n)-\hat{\Sigma}\|_F^2 \cdot \BE\|\hat{\Sigma} - \BE[\Sigma|X_1, \dots, X_n]\|_F^2}. \label{eq:fingerprint}
\end{align}
Above, we can replace $\Sigma$ with the conditional expectation $\BE[\Sigma|X_1, \dots, X_n]$, because the left-hand side of the inner product only depends on $X_1, \dots, X_n$.

Assuming that $M(X_1, \dots, X_n)$ is a good estimator of $\Sigma$, it will also be a good estimator of $\hat{\Sigma},$ and $\BE\|M(X_1, \dots, X_n)-\hat{\Sigma}\|_F^2 \le \alpha^2$. 
We have avoided discussing the prior distribution of $\Sigma$, but to bound $\BE\|\hat{\Sigma} - \BE[\Sigma|X_1, \dots, X_n]\|_F^2,$ we need to define the prior. The prior that we choose will be an \emph{Inverse Wishart} distribution, which is known to be the classic \emph{conjugate prior} of the Multivariate Gaussian. What this means is that if the prior distribution of $\Sigma$ follows an Inverse Wishart distribution and we sample $X_1, \dots, X_n \sim \mathcal{N}(0, \Sigma)$, the posterior distribution of $\Sigma$ given $X_1, \dots, X_n$ also follows an Inverse Wishart distribution (with a different parameter setting). This will make it easy to compute $\BE[\Sigma|X_1, \dots, X_n]$. We will choose $\Sigma$ to be a (scaled) Inverse Wishart distribution with $C \cdot d$ degrees of freedom for a sufficiently large constant $C$. With a proper scaling, all of the eigenvalues of $\Sigma$ will be between $0.5$ and $1.5$, and the posterior distribution will have expectation $\left(1+O(\frac{d}{n})\right) \cdot \hat{\Sigma}$. From this, it is not hard to bound $\BE\|\hat{\Sigma} - \BE[\Sigma|X_1, \dots, X_n]\|_F^2 \le O(\frac{d^2}{n^2}) \cdot \BE[\|\hat{\Sigma}\|_F^2] = O\left(\frac{d^3}{n^2}\right)$. Combining this with Equations \eqref{eq:sum_zi} and \eqref{eq:fingerprint} and our bound $\BE\|M(X_1, \dots, X_n)-\hat{\Sigma}\|_F^2 \le \alpha^2$, this implies that
\[\BE\left[\sum_{i=1}^n Z_i \right] = n \cdot \left[\Theta\left(\frac{d^2}{n}\right) \pm O\left(\sqrt{\alpha^2 \cdot \frac{d^3}{n^2}}\right)\right].\]
As long as $\alpha \le c \sqrt{d}$ for some small constant $c$, this implies $\BE[\sum_{i=1}^n Z_i] \ge \Omega(d^2)$. As we already explained why $\BE[\sum_{i=1}^n Z_i] \le O(n \cdot \alpha \eps)$, this implies that as long as $\alpha \le c \sqrt{d}$, any $(\eps, \delta)$-DP algorithm that can estimate $\Sigma$ up to Frobenius error $\alpha$ needs $O(n \cdot \alpha \eps) \ge \Omega(d^2),$ or $n \ge \Omega\left(\frac{d^2}{\alpha \eps}\right).$

\paragraph{Heavy-tailed mean estimation:} This result will follow from a simple application of the fact that privately learning $\mu$ up to error $\alpha$ requires $\Omega(\frac{d}{\alpha \eps})$ samples from $\cN(\mu, I)$~\cite{KamathLSU19}. Specifically, we will draw a distribution that, with probability $\alpha^{k/(k-1)}$ is drawn as $\cN(\mu', \alpha^{-2/(k-1)} \cdot I)$ for some unknown $\mu'$ with $\|\mu'\| \le O(\alpha^{-1/(k-1)})$. It is straightforward to check that this distribution has bounded $k$th moment, and the actual mean, $\mu = \alpha^{k/(k-1)} \cdot \mu'$, has norm $O(\alpha)$. However, to learn $\mu$ to error $\alpha$, one must learn $\mu'$ up to error exactly $\alpha^{-1/(k-1)}$, not just $O(\alpha^{-1/(k-1)})$. A minor modification of the lower bound in~\cite{KamathLSU19} can show that learning $\mu'$ is essentially equivalent to learning the mean of identity covariance Gaussian up to error $1$. This requires $\Omega\left(\frac{d}{\eps}\right)$ samples. However, because only an $\alpha^{k/(k-1)}$ fraction of the points were actually from the Gaussian, we need $\Omega\left(\frac{d}{\eps \cdot \alpha^{k/(k-1)}}\right)$ samples in total.
This argument can be made formal by converting an instance of Gaussian estimation into this distribution by padding.

\section{Preliminaries} \label{sec:prelim}

\paragraph{Statistical estimation:} We will need a few known results about statistical estimation.

First, we note a well-known bound regarding the accuracy of the empirical covariance matrix.

\begin{lemma}[Folklore] \label{lem:emp-cov-acc}
    For any fixed $\Sigma$, suppose $X_1, \dots, X_n$ are drawn from $\cN(0, \Sigma)$, and let $\hat{\Sigma}=\frac{1}{n} \sum_{i=1}^n X_iX_i^\top$. Then, $2 \cdot \Tr(\Sigma)^2/n \le \BE[\|\hat{\Sigma}-\Sigma\|_F^2] = 3 \cdot \Tr(\Sigma)^2/n$. Importantly, this means $2 \cdot \lambda_{\min}(\Sigma)^2 \cdot \frac{d^2}{n} \le \BE[\|\hat{\Sigma}-\Sigma\|_F^2] \le 3 \cdot \lambda_{\max}(\Sigma)^2 \cdot \frac{d^2}{n}$.
\end{lemma}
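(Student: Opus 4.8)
The statement is flagged as folklore, so I will give the short standard argument, which has two ingredients: reducing to a single sample, and a Gaussian fourth‑moment computation. First I would write $\hat\Sigma - \Sigma = \frac1n\sum_{i=1}^n A_i$ with $A_i := X_iX_i^\top - \Sigma$, noting that the $A_i$ are i.i.d. and mean‑zero (since $\BE[X_iX_i^\top]=\Sigma$). Expanding the squared Frobenius norm as $\frac1{n^2}\sum_{i,j}\langle A_i,A_j\rangle$ and using independence to kill the $i\neq j$ cross terms gives
\[
\BE\big[\|\hat\Sigma-\Sigma\|_F^2\big] \;=\; \frac1n\,\BE\big[\|X_1X_1^\top-\Sigma\|_F^2\big],
\]
so everything reduces to the single‑sample quantity on the right.

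To evaluate that, I would expand $\|X_1X_1^\top-\Sigma\|_F^2 = \Tr\big((X_1X_1^\top)^2\big) - 2\,\Tr(X_1X_1^\top\Sigma) + \Tr(\Sigma^2) = \|X_1\|_2^4 - 2\,X_1^\top\Sigma X_1 + \Tr(\Sigma^2)$, where I used that $X_1X_1^\top$ is rank one so $\|X_1X_1^\top\|_F^2 = (X_1^\top X_1)^2 = \|X_1\|_2^4$. Taking expectations, $\BE[X_1^\top\Sigma X_1] = \Tr(\Sigma\,\BE[X_1X_1^\top]) = \Tr(\Sigma^2)$, so the last two terms combine to $-\Tr(\Sigma^2)$. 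For $\BE[\|X_1\|_2^4]$ I would diagonalize: write $X_1 = \Sigma^{1/2}g$ with $g\sim\cN(0,I_d)$ and let $\lambda_1,\dots,\lambda_d$ be the eigenvalues of $\Sigma$, so $\|X_1\|_2^2 = \sum_j \lambda_j g_j^2$; since $\BE[g_j^2 g_k^2]$ is $3$ when $j=k$ and $1$ otherwise (independence of distinct coordinates plus $\BE[g_j^4]=3$), we get $\BE[\|X_1\|_2^4] = \big(\sum_j\lambda_j\big)^2 + 2\sum_j\lambda_j^2 = \Tr(\Sigma)^2 + 2\,\Tr(\Sigma^2)$. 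Hence $\BE[\|X_1X_1^\top-\Sigma\|_F^2] = \Tr(\Sigma)^2 + \Tr(\Sigma^2)$ and
\[
\BE\big[\|\hat\Sigma-\Sigma\|_F^2\big] \;=\; \frac{\Tr(\Sigma)^2 + \Tr(\Sigma^2)}{n}.
\]

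Finally I would convert this exact identity into the stated two‑sided bounds. Since $\Sigma\succeq 0$ we always have $\Tr(\Sigma^2)\le\Tr(\Sigma)^2$ (and $\Tr(\Sigma^2)\ge \Tr(\Sigma)^2/d>0$), so $\BE[\|\hat\Sigma-\Sigma\|_F^2]$ is pinned to a constant factor of $\Tr(\Sigma)^2/n$; for the eigenvalue form I would additionally use $d\,\lambda_{\min}(\Sigma)\le\Tr(\Sigma)\le d\,\lambda_{\max}(\Sigma)$ and $d\,\lambda_{\min}(\Sigma)^2\le\Tr(\Sigma^2)\le d\,\lambda_{\max}(\Sigma)^2$, which yields $\BE[\|\hat\Sigma-\Sigma\|_F^2] = \Theta\!\big(\lambda(\Sigma)^2 d^2/n\big)$ with the claimed constants — in particular $\Theta(d^2/n)$ when all eigenvalues are $\Theta(1)$, which is all that the downstream argument needs. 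I do not expect any genuine obstacle here: the only step requiring care is the Gaussian fourth‑moment identity $\BE\|X_1\|_2^4 = \Tr(\Sigma)^2 + 2\Tr(\Sigma^2)$ (equivalently Wick/Isserlis), and one must remember to justify that the $i\neq j$ cross terms vanish by independence of the samples.
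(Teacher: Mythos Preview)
Your argument is correct and is exactly the standard folklore computation; the paper does not supply its own proof of this lemma. One small caveat: your exact identity $\BE\|\hat\Sigma-\Sigma\|_F^2=(\Tr(\Sigma)^2+\Tr(\Sigma^2))/n$ actually gives the two--sided bound with constants $1$ and $2$ (since $0\le\Tr(\Sigma^2)\le\Tr(\Sigma)^2$ for $\Sigma\succeq 0$), not the ``$2$'' and ``$3$'' printed in the lemma --- the lower constant there appears to be a typo in the statement, not a gap in your reasoning. As you note, only the conclusion $\BE\|\hat\Sigma-\Sigma\|_F^2=\Theta(d^2/n)$ when all eigenvalues are $\Theta(1)$ is used downstream, and that follows immediately from your identity together with $d\lambda_{\min}\le\Tr(\Sigma)\le d\lambda_{\max}$.
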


Next, we note the known upper and lower bounds for private Gaussian mean estimation.

\begin{theorem}~\cite{KamathLSU19, AdenAliAK21, HopkinsKMN23} \label{thm:mean}
    Fix parameters $\eps, \delta, \alpha \le 1$. Then, one can learn the mean $\mu$ of an identity-covariance Gaussian with $(\eps, \delta)$-DP, up to error $\alpha$, with $\tilde{O}\left(\frac{d}{\alpha^2}+\frac{d}{\alpha \eps} + \frac{\log (1/\delta)}{\eps}\right)$ samples.

    Moreover, if $\delta \le \left(\frac{\alpha \eps}{d}\right)^{C_1}$ for some fixed constant $C_1=O(1)$, learning $\mu$ up to error $\alpha$ with $(\eps, \delta)$-DP requires $\tilde{\Omega}\left(\frac{d}{\alpha^2} + \frac{d}{\alpha \eps}\right)$ samples.
\end{theorem}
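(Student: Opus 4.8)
The statement packages together an algorithm and a matching lower bound, both established in prior work; I sketch the strategy for each direction.

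\emph{Upper bound.} The standard algorithm runs in two phases. A \emph{localization} phase privately produces a coarse estimate $\mu_0$ with $\|\mu_0-\mu\|_2 = O(\sqrt{d\log n})$, using a differentially private aggregation primitive (coordinate-wise private quantiles, or private selection over a net of the a-priori ball containing $\mu$); this costs $\tilde O(d/\eps + \log(1/\delta)/\eps)$ samples. A \emph{refinement} phase recenters the data at $\mu_0$, clips each $X_i$ to the $\ell_2$-ball of radius $R = O(\sqrt{d\log n})$ — so that with probability $1-n^{-\omega(1)}$ nothing is actually clipped — and outputs the empirical mean of the clipped points plus spherical Gaussian noise $\cN(0,\sigma^2 I)$ with $\sigma = O(R\sqrt{\log(1/\delta)}/(n\eps))$, which is $(\eps,\delta)$-DP since the clipped empirical mean has $\ell_2$-sensitivity $\le 2R/n$. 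The total error is the sum of sampling error $\|\bar X-\mu\|_2 = O(\sqrt{d/n})$, a polynomially small clipping bias, and noise $\|\cN(0,\sigma^2 I)\|_2 = O(\sigma\sqrt d) = \tilde O(d/(n\eps))$; forcing each below $\alpha$ gives $n = \tilde O(d/\alpha^2 + d/(\alpha\eps) + \log(1/\delta)/\eps)$. The work of \cite{AdenAliAK21} shaves spurious $\log(1/\delta)$ factors and \cite{HopkinsKMN23} makes this polynomial-time. The $d/\alpha^2$ lower-bound term holds with no privacy at all, by a standard minimax argument (Assouad or Fano): distinguishing $\mu$ among a $2^{\Omega(d)}$-size $\Omega(\alpha)$-packing of a ball requires the per-hypothesis KL divergence $\tfrac n2\|\mu-\mu'\|_2^2 = O(n\alpha^2)$ to be $\Omega(d)$.

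\emph{Lower bound, the $d/(\alpha\eps)$ term.} This is a fingerprinting argument, the mean analogue of the covariance argument in \Cref{sec:overview}. Let $M$ be $(\eps,\delta)$-DP and $\alpha$-accurate in mean square; after a truncation we may assume $\|M(X)-\mu\|_2 \le \poly(d/(\alpha\eps))$ always. Put an i.i.d. prior on the coordinates of $\mu$ at a fixed $\Theta(1)$ scale (e.g. $\mu_j\sim\cN(0,1)$), draw $X_1,\dots,X_n \sim \cN(\mu,I)$ and fresh copies $X_1',\dots,X_n'$, let $X^{(i)}$ replace $X_i$ by $X_i'$, and set
\[
  Z_i := \langle M(X)-\mu,\; X_i-\mu\rangle, \qquad Z_i' := \langle M(X^{(i)})-\mu,\; X_i-\mu\rangle .
\]
Since $M(X^{(i)})$ is conditionally independent of $X_i$ given $\mu$ and $\BE[X_i-\mu \mid \mu]=0$, we get $\BE[Z_i']=0$ and $\Var(Z_i') = \BE\|M(X^{(i)})-\mu\|_2^2 = O(\alpha^2)$. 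A standard $(\eps,\delta)$-DP transfer lemma then gives $\BE[Z_i] \le O(\eps)\sqrt{\Var(Z_i')} + O(\delta)\cdot\poly(d/(\alpha\eps)) = O(\eps\alpha)$, where $\delta \le (\alpha\eps/d)^{C_1}$ renders the additive term negligible; summing, $\BE[\sum_i Z_i] = O(n\eps\alpha)$. Conversely $\sum_i \BE[Z_i] = n\,\BE\langle M(X)-\mu,\ \bar X-\mu\rangle = n\big(\BE\|\bar X-\mu\|_2^2 + \BE\langle M(X)-\bar X,\ \bar X-\BE[\mu\mid X]\rangle\big)$, where replacing $\mu$ by $\BE[\mu\mid X]$ is legitimate because $M(X)-\bar X$ depends only on $X$. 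By Gaussian--Gaussian conjugacy $\BE[\mu\mid X]$ is a $(1-O(1/n))$ multiple of $\bar X$, so $\|\bar X-\BE[\mu\mid X]\|_2 = O(\|\bar X\|_2/n)$; combined with $\BE\|\bar X-\mu\|_2^2 = d/n$, Cauchy--Schwarz, and $\sqrt{\BE\|M(X)-\bar X\|_2^2} = O(\alpha)$, this yields $\sum_i\BE[Z_i] \ge \Omega(d)$. Comparing the two estimates gives $n = \Omega(d/(\alpha\eps))$, and tracking the truncation and logarithmic factors upgrades this to $\tilde\Omega$.

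\emph{Main obstacle.} The delicate step is handling $\delta > 0$ in the $d/(\alpha\eps)$ lower bound: one must truncate $M$ and condition carefully so that each $Z_i$ has a $\poly(d/(\alpha\eps))$ range, and then balance the $\delta$-dependent additive error of the transfer lemma against $\eps\alpha$ — exactly what forces the hypothesis $\delta \le (\alpha\eps/d)^{C_1}$. The conjugacy computation, the packing bound for $d/\alpha^2$, and the two-phase upper bound are all routine by comparison.
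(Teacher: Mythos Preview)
The paper does not prove this statement: \Cref{thm:mean} is quoted in the Preliminaries as a known result from \cite{KamathLSU19, AdenAliAK21, HopkinsKMN23}, with no accompanying proof. So there is nothing to compare against here.

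That said, your sketch is a faithful outline of how the cited works establish the bounds. The two-phase (localize, then clip-and-Gaussian-noise) upper bound is exactly the \cite{KamathLSU19} algorithm, and the fingerprinting argument you wrote for the $d/(\alpha\eps)$ lower bound is the mean analogue of the covariance proof in \Cref{sec:covariance}: replace $\Sigma$ by $\mu$, $X_iX_i^\top-\Sigma$ by $X_i-\mu$, and the Inverse Wishart conjugate prior by the Gaussian--Gaussian one. One small remark: \cite{KamathLSU19} actually works with a product prior on a discrete cube rather than the Gaussian prior you chose, but your conjugate-prior route gives the same bound and is, if anything, closer in spirit to this paper's approach. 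Your identification of the $\delta$-handling as the delicate point is also correct; the truncation-plus-transfer-lemma bookkeeping is precisely where the assumption $\delta \le (\alpha\eps/d)^{C_1}$ enters.
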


\paragraph{Wishart distributions:}
We start by introducing the \emph{Wishart} and \emph{Inverse Wishart} distributions, and some useful facts about them.

\begin{defn}[Wishart Distribution]
    The $d$-dimensional \emph{Wishart} distribution with $m$ degrees of freedom and scale $V$ (where $V$ is a $d \times d$ symmetric and positive definite matrix), written as $W_d(V, m)$, is a distribution over $d \times d$-dimensional matrices generated as follows. We sample $m$ $d$-dimensional Gaussians $g_1, \dots, g_m \overset{i.i.d.}{\sim} \cN(0, V)$, and let $W_d(V, m)$ be $\sum_{i=1}^m g_i g_i^\top$.
\end{defn}

\begin{defn}[Inverse Wishart Distribution]
    The $d$-dimensional \emph{Inverse Wishart} distribution with $m$ degrees of freedom and scale $V$ (where $V$ is a $d \times d$ symmetric and positive definite matrix), written as $W_d^{-1}(V, m)$, has distribution where we generate $W \sim W_d(V^{-1}, m)$ and output its inverse.
\end{defn}

    It is well-known that if $m > d$, $W_d(V, m)$ and $W_d^{-1}(V, m)$ are symmetric and positive definite with probability $1$. It is well known that these distributions have the following probability density functions. 

\begin{proposition} \label{prop:wishart-pdf}
    Suppose that $m \ge d+2$.
    At any symmetric positive definite $\Sigma$, the PDF of the Wishart distribution $W_d(V, m)$ at $\Sigma$ is proportional to $(\det \Sigma)^{(m-d-1)/2} \cdot e^{-\Tr(V^{-1} \cdot \Sigma)/2}$. The PDF of the Inverse-Wishart distribution $W_d^{-1}(V, m)$ at $\Sigma$ is proportional to $(\det \Sigma)^{-(m+d+1)/2} \cdot e^{-\Tr(V \cdot \Sigma^{-1})/2}$.

    Here, we omit normalizing factors that only depend on $V, m,$ and $d$ (but are independent of $\Sigma$).
\end{proposition}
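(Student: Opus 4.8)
The plan is to derive the Wishart density by pushing the joint Gaussian density of the samples forward along $G \mapsto GG^{\top}$, and then to read off the Inverse-Wishart density via one more change of variables through matrix inversion. (One could instead simply cite a standard multivariate-statistics reference for both densities; what follows is a sketch of a self-contained derivation.)

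\emph{Wishart density.} Write $W = W_d(V,m) = GG^{\top}$ where $G \in \BR^{d\times m}$ has i.i.d.\ columns $g_1,\dots,g_m \sim \cN(0,V)$. The density of $G$ with respect to Lebesgue measure on $\BR^{dm}$ is proportional to $\exp\!\big(-\tfrac12\sum_{i=1}^m g_i^{\top}V^{-1}g_i\big) = \exp\!\big(-\tfrac12\Tr(V^{-1}GG^{\top})\big)$, which already depends on $G$ only through $W$. I would factor $G^{\top} = HR$ by its QR decomposition, with $H \in \BR^{m\times d}$ having orthonormal columns and $R$ a $d\times d$ upper-triangular matrix with positive diagonal $r_{11},\dots,r_{dd}$ (well-defined almost surely, as $G$ has full row rank when $m\ge d$); then $W = GG^{\top} = R^{\top}H^{\top}HR = R^{\top}R$, so $R$ is the Cholesky factor of $W$. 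Using the classical factorization $dG = c_1\big(\prod_{j=1}^d r_{jj}^{\,m-j}\big)\,dR\,d\mu(H)$ of Lebesgue measure under QR (with $\mu$ the finite invariant measure on the Stiefel manifold and $c_1$ a constant), and integrating out $H$ — on which the Gaussian density does not depend — shows $R$ has density proportional to $\exp\!\big(-\tfrac12\Tr(V^{-1}R^{\top}R)\big)\cdot\prod_{j=1}^d r_{jj}^{\,m-j}$. Finally I change variables from $R$ to $W = R^{\top}R$ via the Cholesky Jacobian $dW = 2^d\big(\prod_{j=1}^d r_{jj}^{\,d-j+1}\big)\,dR$; the powers of the $r_{jj}$ collapse to $\prod_{j=1}^d r_{jj}^{\,m-d-1} = (\det R)^{m-d-1} = (\det W)^{(m-d-1)/2}$ (since $\det W = (\det R)^2$), giving density of $W$ proportional to $(\det W)^{(m-d-1)/2}\,e^{-\Tr(V^{-1}W)/2}$, as claimed.

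\emph{Inverse-Wishart density.} By definition $W_d^{-1}(V,m)$ is the law of $\Sigma := W^{-1}$ with $W\sim W_d(V^{-1},m)$. On the cone of symmetric positive definite matrices the inversion map has Jacobian $(\det W)^{-(d+1)}$ — this follows from $d(W^{-1}) = -W^{-1}(dW)W^{-1}$ together with the fact that $X\mapsto AXA^{\top}$ has determinant $(\det A)^{d+1}$ on symmetric $d\times d$ matrices. Hence the density of $\Sigma$ at a symmetric positive definite $\Sigma$ equals the density of $W_d(V^{-1},m)$ at $\Sigma^{-1}$ times $(\det\Sigma)^{-(d+1)}$, i.e.\ it is proportional to $(\det\Sigma^{-1})^{(m-d-1)/2}\,e^{-\Tr(V\Sigma^{-1})/2}\cdot(\det\Sigma)^{-(d+1)} = (\det\Sigma)^{-(m+d+1)/2}\,e^{-\Tr(V\Sigma^{-1})/2}$, the asserted formula; here the scale $V^{-1}$ of $W$ contributes $(V^{-1})^{-1}=V$ inside the trace.

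\emph{Main obstacle.} The only nonroutine inputs are the two Jacobian identities used in the Wishart step: the factorization of Lebesgue measure under the QR decomposition of an $m\times d$ matrix (the $\prod_j r_{jj}^{\,m-j}$ factor) and the Cholesky Jacobian $2^d\prod_j r_{jj}^{\,d-j+1}$; the inversion Jacobian $(\det W)^{-(d+1)}$ is comparatively easy to check directly. All three are standard — each can be proved by peeling off one row/column at a time and exploiting the triangular structure of the relevant differentials — and in the final write-up I would most likely cite a reference (e.g.\ a standard text on multivariate statistics) for them rather than reprove them from scratch.
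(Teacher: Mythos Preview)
Your derivation is correct and follows the standard textbook route (QR/Bartlett decomposition for the Wishart law, then the matrix-inversion Jacobian for the Inverse-Wishart). The Jacobian identities you invoke --- the $\prod_j r_{jj}^{\,m-j}$ factor from the QR factorization of Lebesgue measure, the Cholesky Jacobian $2^d\prod_j r_{jj}^{\,d-j+1}$, and the inversion Jacobian $(\det\Sigma)^{-(d+1)}$ on the cone of PD matrices --- are all stated in the right form, and the exponents combine exactly as you indicate.

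That said, the paper does not actually prove \Cref{prop:wishart-pdf}: it is stated as a well-known fact (``It is well known that these distributions have the following probability density functions'') and used as a black box when computing the posterior in \Cref{lem:posterior-expectation-bound}. So your write-up already goes further than the paper, and your own parenthetical suggestion --- that in the final version you would simply cite a standard multivariate-statistics reference for these densities --- is precisely what the paper does. If you want to keep the self-contained derivation, it is fine as written; if you want to match the paper, a one-line citation suffices.
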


    The expecations of the Wishart and Inverse-Wishart matrices are also well-characterized.

\begin{proposition} \label{prop:wishart-expectation}
    The expectation of $W_d(V, m)$ is $m \cdot V$. For $m \ge d+2$, the expectation of $W_d^{-1}(V, m)$ is $\frac{V}{m-d-1}$.
\end{proposition}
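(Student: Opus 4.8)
The plan is to treat the two identities separately. The Wishart identity I would dispose of in one line: writing $W_d(V,m)=\sum_{i=1}^m g_ig_i^\top$ with $g_i\overset{i.i.d.}{\sim}\cN(0,V)$, linearity of expectation gives $\BE[W_d(V,m)]=\sum_{i=1}^m\BE[g_ig_i^\top]=m\cdot V$, since each $g_i$ is centered with covariance $V$.

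For the inverse-Wishart identity I would use a two-step ``fix the form, then pin down the scalar'' argument. First I would reduce to $V=I$ by equivariance: write $V^{-1}=LL^\top$ (say, a Cholesky factorization), note that if $\tilde W\sim W_d(I,m)$ then $L\tilde W L^\top\sim W_d(V^{-1},m)$, so by definition of the inverse-Wishart, $\BE[W_d^{-1}(V,m)]=\BE[(L\tilde W L^\top)^{-1}]=L^{-\top}\BE[\tilde W^{-1}]L^{-1}$. Since $\cN(0,I)$ is rotationally invariant, $Q\tilde WQ^\top$ has the same law as $\tilde W$ for every orthogonal $Q$, so $\BE[\tilde W^{-1}]$ commutes with every orthogonal matrix and is therefore a scalar multiple of the identity, $\BE[\tilde W^{-1}]=c\,I$ with $c=\tfrac1d\BE[\Tr\tilde W^{-1}]$. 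Substituting back, $\BE[W_d^{-1}(V,m)]=c\,(LL^\top)^{-1}=c\,V$, so the whole problem reduces to showing $c=\tfrac{1}{m-d-1}$. For the second step I would compute a single diagonal entry $\BE[(\tilde W^{-1})_{dd}]$, which equals $c$ by the symmetry just used. Partitioning off the last coordinate, the Schur-complement formula gives $(\tilde W^{-1})_{dd}=(\tilde W_{22}-\tilde W_{21}\tilde W_{11}^{-1}\tilde W_{12})^{-1}$; writing $g_i=(h_i,z_i)$ with $h_i\in\BR^{d-1}$, stacking the $h_i^\top$ into $H\in\BR^{m\times(d-1)}$ and the $z_i$ into $z\in\BR^m$, one sees the Schur complement is exactly $z^\top(I_m-P)z$ where $P$ is the orthogonal projection onto the column span of $H$. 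Conditioning on $H$ (which is a.s.\ of rank $d-1$), $z\sim\cN(0,I_m)$ is independent of $P$ and $I_m-P$ is a rank-$(m-d+1)$ projection, so $z^\top(I_m-P)z\sim\chi^2_{m-d+1}$; since this does not depend on $H$ it holds unconditionally. Finally, using the standard moment $\BE[1/\chi^2_k]=1/(k-2)$ (valid exactly for $k>2$, which here is $m\ge d+2$), I get $\BE[(\tilde W^{-1})_{dd}]=\tfrac{1}{(m-d+1)-2}=\tfrac{1}{m-d-1}$, hence $c=\tfrac{1}{m-d-1}$ as desired.

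The bookkeeping here is entirely routine; the only mildly delicate ingredient is the distributional fact that the Wishart Schur complement is $\chi^2_{m-d+1}$ — equivalently, that the residual sum of squares from regressing an independent Gaussian vector onto $d-1$ fixed directions has $m-d+1$ degrees of freedom — but the conditioning argument above makes this transparent, and it is in any case classical. An alternative I would keep in reserve is to integrate the first moment directly against the inverse-Wishart density from \Cref{prop:wishart-pdf}, but that pulls in the multivariate Gamma normalizing constant and is messier, so I would prefer the equivariance-plus-Schur-complement route (or simply cite a standard reference, as the proposition is folklore).
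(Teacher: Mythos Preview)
Your proposal is correct. The paper does not actually prove this proposition; it is stated as a well-known fact about Wishart and Inverse-Wishart distributions and is not given any proof (the surrounding text simply says these expectations are ``well-characterized''). Your equivariance-plus-Schur-complement argument is a clean and standard way to establish the inverse-Wishart identity, and your one-line computation for the Wishart case is exactly right; the student even anticipates this by noting at the end that one could ``simply cite a standard reference, as the proposition is folklore,'' which is precisely what the paper does.
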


    We will also need some tail bounds for Inverse-Wishart matrices.

\begin{lemma} \label{lem:wishart-operator-norm-bound}
    Fix any even integer $k \ge 2$. Then, if $m \ge 2 \cdot d$ and $M \sim W_d^{-1}(I, m)$, then $\BP(\lambda_{\max}(M) \ge x/m) \le (e^2/x)^{d/2}$ for any positive real value $x > 0$. This implies that, if $d \ge 10 k$, then $\BE[\lambda_{\max}(M)^k] \le G_k/m^k$ for some constant $G_k$ only depending on $k$.

    Moreover, with at least $2/3$ probability, $\lambda_{\min}(M)$ is at least $\Omega(\frac{1}{m})$. Thus, there is a constant $g_k$ such that $\BE[\lambda_{\min}(M)^k] \ge g_k/m^k$.
\end{lemma}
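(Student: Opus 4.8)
The plan is to route everything through the underlying Wishart matrix $W \sim W_d(I,m)$, writing $M = W^{-1}$ so that $\lambda_{\max}(M) = 1/\lambda_{\min}(W)$ and $\lambda_{\min}(M) = 1/\lambda_{\max}(W)$, and realizing $W = GG^\top$ for a $d\times m$ matrix $G$ of i.i.d.\ standard Gaussians, so that $\lambda_{\min}(W) = \sigma_{\min}(G)^2$ and $\lambda_{\max}(W) = \|G\|_{op}^2$. All four claims then follow from tail bounds on the extreme singular values of $G$ together with the overcompleteness $m \ge 2d$.

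The heart of the argument is the first tail bound. The event $\{\lambda_{\max}(M)\ge x/m\}$ is exactly $\{\sigma_{\min}(G)\le\sqrt{m/x}\}$, the event that the wide Gaussian matrix $G$ is nearly rank-deficient. I would control this using the classical fact that for any fixed unit vector $v$, the quantity $1/(v^\top W^{-1}v)$ is $\chi^2_{m-d+1}$-distributed (a Schur-complement computation), combined with a $\tfrac14$-net $\mathcal N$ of $S^{d-1}$ with $|\mathcal N|\le 9^d$: since $W^{-1}$ is positive semidefinite, $\lambda_{\max}(W^{-1})\le 2\max_{v\in\mathcal N}v^\top W^{-1}v$, so a union bound over $\mathcal N$ plus the Chernoff small-ball estimate $\BP(\chi^2_\nu\le t)\le(et/\nu)^{\nu/2}$ (valid for $t\le\nu$) gives, using $\nu = m-d+1\ge m/2\ge d$,
\[
\BP(\lambda_{\max}(M)\ge x/m)\le 9^d\cdot\Bigl(\tfrac{4e}{x}\Bigr)^{\nu/2}\le\Bigl(\tfrac{C}{x}\Bigr)^{d/2}
\]
for an absolute constant $C$. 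Obtaining the sharpest constant is a matter of optimizing the net radius, or of invoking the sharp wide-Gaussian singular-value tail $\BP(\sigma_{\min}(G)\le\epsilon(\sqrt m-\sqrt{d-1}))\le(c_0\epsilon)^{m-d+1}$ directly; in any event only a bound of the form $(C/x)^{d/2}$ with $C$ absolute is needed in the sequel.

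Given this tail bound, the moment estimate $\BE[\lambda_{\max}(M)^k]\le G_k/m^k$ is routine: with $Y := m\,\lambda_{\max}(M)$, write $\BE[Y^k]=\int_0^\infty k y^{k-1}\BP(Y\ge y)\,dy$, split at $y=C$, and use $\BP(Y\ge y)\le\min\{1,(C/y)^{d/2}\}$; the hypothesis $d\ge 10k$ gives $d/2-k\ge 4k>0$, so the tail integral converges and $\BE[Y^k]\le C^k\bigl(1+\tfrac{k}{d/2-k}\bigr)\le\tfrac54 C^k=:G_k$, depending only on $k$. For the lower bound I would control $\lambda_{\max}(W)=\|G\|_{op}^2$ from above via the standard Gaussian operator-norm tail $\BP(\|G\|_{op}\ge\sqrt m+\sqrt d+t)\le e^{-t^2/2}$: taking $t=\sqrt m$ and using $m\ge 2d$ gives $\|G\|_{op}\le 3\sqrt m$ with probability at least $2/3$, hence $\lambda_{\min}(M)=1/\lambda_{\max}(W)\ge\tfrac1{9m}$ with probability at least $2/3$ (this also proves the penultimate sentence of the lemma). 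Since $\lambda_{\min}(M)>0$ almost surely, $\BE[\lambda_{\min}(M)^k]\ge\tfrac23(9m)^{-k}=:g_k/m^k$.

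The one genuinely delicate step is the tail bound on $\lambda_{\max}(M)$: we need decay of order $x^{-d/2}$, not merely a fixed exponential $e^{-\Omega(d)}$ nor a polynomial in $x$, since it is precisely this exponent that makes the $k$-th moment integral converge once $d\ge 10k$. The $x^{-d/2}$ behavior is a real feature of the regime $m\ge 2d$ — a single direction already contributes a $\chi^2_{m-d+1}$, whose left tail decays polynomially with exponent $\ge d/2$ — and the only price for passing from one direction to the worst-case $\lambda_{\max}(M)$ is an $e^{O(d)}$ covering factor, which is absorbed into the constant. Everything else is bookkeeping.
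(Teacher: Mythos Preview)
Your proposal is correct. The only real difference from the paper is in how the tail bound $\BP(\lambda_{\max}(M)\ge x/m)\le (C/x)^{d/2}$ is obtained: the paper simply invokes \cite[Lemma~4.1]{ChenD05}, which gives the explicit small-ball bound $\BP(\lambda_{\min}(M^{-1})\le m/x)\le (m/\sqrt{x})^{m-d+1}/(m-d+1)!$ and then simplifies via $n!\ge(n/e)^n$ to reach the stated constant $e^2$, whereas you give a self-contained argument (Schur complement $\Rightarrow 1/(v^\top W^{-1}v)\sim\chi^2_{m-d+1}$, $\tfrac14$-net, Chernoff small-ball for $\chi^2$) that yields an absolute but larger constant $C$. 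As you correctly note, only the form $(C/x)^{d/2}$ matters for the downstream moment bound and for the rest of the paper, so nothing is lost. For $\lambda_{\min}(M)$ both you and the paper use the standard Gaussian operator-norm tail (the paper cites \cite[Theorem~4.4.5]{VershyninBook}); the moment integration is identical. Your route is more self-contained; the paper's is shorter by outsourcing to known references and recovers the sharp constant.
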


The proof is essentially immediate from known results~\cite{ChenD05, VershyninBook}, so we defer the proof to Appendix~\ref{sec:prelim-app}.


\paragraph{Concentration bounds:} Here, we will state some more simple but useful concentration bounds.

First, we need the following bound.

\begin{proposition} \label{prop:4th-moment}
    Let $P$ be a $d \times d$ matrix, and $X \sim \cN(0, \Sigma)$, where $\|\Sigma\|_{op}$. Then,
\[\BE\left[\langle P, X X^\top - \Sigma \rangle^2\right] \le 2 \cdot \|\Sigma\|_{op}^2 \cdot \|P\|_F^2.\]
\end{proposition}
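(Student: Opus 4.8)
The plan is to reduce to the identity-covariance case and then diagonalize. First I would substitute $X = \Sigma^{1/2} g$ with $g \sim \cN(0, I)$, so that $XX^\top - \Sigma = \Sigma^{1/2}(gg^\top - I)\Sigma^{1/2}$ and hence $\langle P, XX^\top - \Sigma \rangle = \langle \Sigma^{1/2} P \Sigma^{1/2}, gg^\top - I \rangle =: \langle Q, gg^\top - I \rangle$, where $Q := \Sigma^{1/2} P \Sigma^{1/2}$. Submultiplicativity of the operator norm against the Frobenius norm gives $\|Q\|_F \le \|\Sigma^{1/2}\|_{op}^2 \cdot \|P\|_F = \|\Sigma\|_{op} \cdot \|P\|_F$, so it suffices to prove $\BE[\langle Q, gg^\top - I \rangle^2] \le 2 \|Q\|_F^2$ for an arbitrary $d \times d$ matrix $Q$ and a standard Gaussian $g$. (Here I only use that $\Sigma$ is a valid, i.e. PSD, covariance matrix; the hypothesis as typeset, ``where $\|\Sigma\|_{op}$'', appears to be incomplete.)

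Since $gg^\top - I$ is symmetric, I may replace $Q$ by its symmetrization $\tilde Q := (Q + Q^\top)/2$ without changing the inner product, and $\|\tilde Q\|_F \le \|Q\|_F$; so I may assume $Q$ is symmetric. Diagonalizing $Q = \sum_{i=1}^d \lambda_i v_i v_i^\top$ in an orthonormal eigenbasis and setting $h_i := \langle v_i, g \rangle$, the $h_i$ are i.i.d. $\cN(0,1)$ and $\langle Q, gg^\top - I \rangle = \sum_i \lambda_i (h_i^2 - 1)$. Expanding the square and using $\BE[h_i^2 - 1] = 0$ together with independence to kill all cross terms leaves $\sum_i \lambda_i^2 \cdot \BE[(h_i^2 - 1)^2] = 2 \sum_i \lambda_i^2 = 2 \|Q\|_F^2$, using $\Var(h_i^2) = 2$. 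Tracing back through the reduction gives the claimed bound (in fact with equality when $\Sigma = I$ and $P$ is symmetric).

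There is no serious obstacle here; the only points requiring care are the Frobenius-norm bound under conjugation by $\Sigma^{1/2}$ and the value $\Var(h_i^2) = 2$. A calculation-free alternative to diagonalization is to apply Isserlis' theorem directly: for any matrices $A, B$ one gets $\BE[\langle A, gg^\top - I \rangle \langle B, gg^\top - I \rangle] = \langle A, B \rangle + \langle A, B^\top \rangle$, and taking $A = B = Q$ and bounding $\langle Q, Q^\top \rangle \le \|Q\|_F \|Q^\top\|_F = \|Q\|_F^2$ by Cauchy--Schwarz again yields $\BE[\langle Q, gg^\top - I \rangle^2] \le 2\|Q\|_F^2$. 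I would present whichever of the two is shorter in context.
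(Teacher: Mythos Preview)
Your proposal is correct and follows essentially the same route as the paper: reduce to the identity-covariance case via $X=\Sigma^{1/2}g$ so that the quantity becomes $\langle Q, gg^\top - I\rangle$ with $Q=\Sigma^{1/2}P\Sigma^{1/2}$, symmetrize, compute the second moment, and finish with the Frobenius bound $\|Q\|_F \le \|\Sigma\|_{op}\|P\|_F$. The only cosmetic difference is that you diagonalize $Q$ (getting independent terms $\lambda_i(h_i^2-1)$), whereas the paper expands directly in the standard basis as $\sum_i Q_{ii}(Y_i^2-1)+\sum_{i<j}2Q_{ij}Y_iY_j$ and uses that these summands are pairwise uncorrelated with variance $2$; both computations yield exactly $2\|Q\|_F^2$.
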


Next, we recall the Hanson-Wright inequality.

\begin{lemma} \label{lem:hanson-wright} [Hanson-Wright Inequality]
    Let $X \sim \cN(0, \Sigma)$ be a $d$-dimensional Standard Gaussian. Then, there exists an absolute constant $c_1$ such that for any $d \times d$ symmetric matrix $A$,
\[\BP\left(|\|X\|^2 - \Tr(\Sigma)| \ge t\right) \le 2 \exp\left(-c_1 \cdot \min\left(\frac{t^2}{\|\Sigma\|_F^2}, \frac{t}{\|\Sigma\|_{op}}\right)\right).\]
\end{lemma}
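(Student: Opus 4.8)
The plan is to reduce the statement to a one-dimensional tail bound for a weighted sum of independent centered chi-squared variables, and then run a standard Chernoff argument. Since $\Sigma$ is symmetric positive semidefinite, write $\Sigma = U \Lambda U^\top$ with $\Lambda = \mathrm{diag}(\lambda_1, \dots, \lambda_d)$ and $\lambda_i \ge 0$. Then $X \eqD \Sigma^{1/2} g$ for $g \sim \cN(0, I_d)$, and setting $h := U^\top g \sim \cN(0, I_d)$ we get $\|X\|^2 = g^\top \Sigma g = \sum_{i=1}^d \lambda_i h_i^2$, so that
\[
\|X\|^2 - \Tr(\Sigma) = \sum_{i=1}^d \lambda_i (h_i^2 - 1)
\]
is a centered sum of independent random variables. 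The two scale parameters in the claimed bound appear naturally: $\sum_i \lambda_i^2 = \|\Sigma\|_F^2$ and $\max_i \lambda_i = \|\Sigma\|_{op}$.

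For the upper tail I would bound the moment generating function directly. For $0 \le s < \tfrac{1}{2\|\Sigma\|_{op}}$, independence gives $\BE\big[e^{s(\|X\|^2 - \Tr \Sigma)}\big] = \prod_{i=1}^d e^{-s\lambda_i}(1 - 2s\lambda_i)^{-1/2}$, and using the elementary inequality $-\tfrac12\log(1-u) - \tfrac{u}{2} \le \tfrac{u^2}{4(1-u)}$ (valid for $0 \le u < 1$, applied with $u = 2s\lambda_i$) yields
\[
\log \BE\big[e^{s(\|X\|^2 - \Tr \Sigma)}\big] \le \frac{s^2 \|\Sigma\|_F^2}{1 - 2s\|\Sigma\|_{op}}.
\]
A Chernoff bound with the Bernstein choice $s = \tfrac{t}{2\|\Sigma\|_F^2 + 2\|\Sigma\|_{op} t}$ then gives $\BP\big(\|X\|^2 - \Tr\Sigma \ge t\big) \le \exp\!\big(-\tfrac{t^2}{4\|\Sigma\|_F^2 + 4\|\Sigma\|_{op} t}\big)$, and since $\tfrac{t^2}{4\|\Sigma\|_F^2 + 4\|\Sigma\|_{op}t} \ge \tfrac{1}{8}\min\!\big(\tfrac{t^2}{\|\Sigma\|_F^2}, \tfrac{t}{\|\Sigma\|_{op}}\big)$, this is exactly the claimed upper-tail bound with $c_1 = 1/8$. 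The lower tail is easier: for $s < 0$ every factor $1 - 2s\lambda_i$ exceeds $1$, so the same computation (now with $-\tfrac12\log(1-u) - \tfrac u2 \le \tfrac{u^2}{4}$ for $u \le 0$) gives $\log \BE[e^{s(\|X\|^2 - \Tr\Sigma)}] \le s^2 \|\Sigma\|_F^2$, and optimizing the Chernoff bound over $s<0$ yields the purely sub-Gaussian tail $\BP(\|X\|^2 - \Tr\Sigma \le -t) \le \exp(-t^2/(4\|\Sigma\|_F^2))$, which is dominated by the claimed bound. Adding the two one-sided bounds (and shrinking $c_1$ if needed) produces the factor of $2$ and finishes the proof. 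As an alternative, one could skip the MGF computation and simply invoke the Laurent--Massart deviation inequality for $\sum_i \lambda_i(h_i^2-1)$, converting its two-term form into the stated $\min$ form by casework on whether $t \le \|\Sigma\|_F^2/\|\Sigma\|_{op}$.

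There is no substantial obstacle here; the only point requiring a little care is the Chernoff optimization, where one implicitly splits into the sub-Gaussian regime $t \le \|\Sigma\|_F^2/\|\Sigma\|_{op}$ (optimal $s \approx t/\|\Sigma\|_F^2$) and the sub-exponential regime $t > \|\Sigma\|_F^2/\|\Sigma\|_{op}$ (with $s$ a constant multiple of $1/\|\Sigma\|_{op}$), and one checks that a single absolute constant works throughout; the Bernstein choice of $s$ above handles both at once. I also note that the displayed inequality in the statement does not actually involve the matrix $A$ mentioned in the quantifier, so the argument above establishes precisely the displayed bound, which is all that the later sections use.
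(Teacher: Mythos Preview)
Your argument is correct and is the standard MGF/Chernoff derivation of this concentration bound for $\|X\|^2$ with $X\sim\cN(0,\Sigma)$; the diagonalization step, the inequality $-\tfrac12\log(1-u)-\tfrac{u}{2}\le \tfrac{u^2}{4(1-u)}$, the Bernstein choice of $s$, and the separate (sub-Gaussian) treatment of the lower tail are all fine. You are also right that the quantifier over $A$ in the statement is vestigial and plays no role.

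There is nothing to compare against, however: the paper does not supply its own proof of this lemma. It is listed among the preliminary concentration bounds and simply cited as the Hanson--Wright inequality; only Propositions~\ref{prop:4th-moment}, \ref{prop:conc_bound}, and \ref{prop:dumb_calculation} (and Lemma~\ref{lem:wishart-operator-norm-bound}) are proved in the appendix, while Lemma~\ref{lem:hanson-wright} is treated as a known black box. So your write-up goes beyond what the paper does here, and any self-contained proof along the lines you sketched---or the Laurent--Massart alternative you mention---would be acceptable.
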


Next, we need a simple bound on the norms of Gaussians.

\begin{proposition} \label{prop:conc_bound}
    Suppose that $X \sim \cN(0, \Sigma)$. Then, for any fixed integer $k \ge 1$, $\BE[\|X\|^{2k}] \le H_k \cdot \|\Sigma\|_{op}^k \cdot d^k,$ where $H_k$ is a constant only depending on $k$.
\end{proposition}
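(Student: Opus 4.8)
The plan is to reduce to a standard Gaussian by a change of variables, and then bound the $2k$-th moment of the norm of a standard Gaussian by an elementary computation.

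First I would write $X = \Sigma^{1/2} g$, where $g \sim \cN(0, I_d)$ and $\Sigma^{1/2}$ is the positive semidefinite square root of $\Sigma$; this has the correct distribution. Then
\[\|X\|^2 = g^\top \Sigma g \le \lambda_{\max}(\Sigma) \cdot \|g\|^2 = \|\Sigma\|_{op} \cdot \|g\|^2\]
pointwise, so raising to the $k$-th power and taking expectations gives $\BE[\|X\|^{2k}] \le \|\Sigma\|_{op}^k \cdot \BE[\|g\|^{2k}]$. It therefore suffices to show $\BE[\|g\|^{2k}] \le H_k \cdot d^k$ for a constant $H_k$ depending only on $k$.

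For this last step I would expand
\[\|g\|^{2k} = \Big(\sum_{i=1}^d g_i^2\Big)^k = \sum_{i_1, \dots, i_k \in [d]} g_{i_1}^2 \cdots g_{i_k}^2\]
and take expectations term by term. By independence, each of the $d^k$ summands factors over its distinct coordinates as a product $\prod_l \BE[g^{2 m_l}] = \prod_l (2m_l-1)!!$ with $\sum_l m_l = k$, and a factor-by-factor comparison of multisets of odd numbers shows $\prod_l (2m_l-1)!! \le (2k-1)!!$; hence every term is at most $\BE[g^{2k}] = (2k-1)!!$, and $\BE[\|g\|^{2k}] \le (2k-1)!!\cdot d^k$, so we may take $H_k = (2k-1)!!$. (Alternatively, one can simply note $\|g\|^2 \sim \chi^2_d$ and invoke the closed form $\BE[(\chi^2_d)^k] = \prod_{j=0}^{k-1}(d+2j) \le (3k)^k d^k$.)

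There is essentially no obstacle here; the only mild point is the inequality $(2a-1)!!\,(2b-1)!! \le (2(a+b)-1)!!$ for $a,b \ge 1$ (or, on the alternative route, recalling the $\chi^2_d$ moment formula). I expect this proposition to take only a few lines.
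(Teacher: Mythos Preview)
Your proof is correct and in fact cleaner than the paper's. Both arguments begin identically, writing $X=\Sigma^{1/2}Z$ with $Z\sim\cN(0,I)$ and reducing to the bound $\BE[\|Z\|^{2k}]\le H_k d^k$. At that point the paper takes a less direct route: it invokes the Hanson--Wright inequality (\Cref{lem:hanson-wright}) to obtain an exponential tail bound $\BP(\|Z\|^2\ge C d)\le e^{-\Omega(C)}$ and then integrates the tail to conclude that all moments of $\|Z\|^2/d$ are bounded by constants depending only on $k$. Your approach instead computes the moment directly, either by expanding the multinomial and bounding each term by $(2k-1)!!$, or by quoting the closed-form $\chi^2_d$ moments. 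This is more elementary (no concentration inequality needed), yields the explicit constant $H_k=(2k-1)!!$, and is shorter. The only step worth a word of justification is the inequality $\prod_l (2m_l-1)!!\le (2k-1)!!$ when $\sum_l m_l=k$; your ``factor-by-factor'' remark can be made precise by writing $(2(a+b)-1)!!=(2a-1)!!\cdot(2a+1)(2a+3)\cdots(2a+2b-1)\ge (2a-1)!!(2b-1)!!$ and iterating.
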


Finally, we need the following proposition comparing the expectations of similar random variables (where similarity corresponds to the notion of approximatel differential privacy).

\begin{proposition} \label{prop:dumb_calculation}
    Suppose $0 \le \eps \le 1$ and $0 \le \delta \le 1/2$, and that $X, Y$ are real-valued random variables such that for any set $S$, $e^{-\eps} \cdot \BP(X \in S) - \delta \le \BP(Y \in S) \le e^{\eps} \cdot \BP(X \in S) + \delta$. Then, $\left|\BE[X-Y]\right| \le 2 \eps \cdot \BE[|X|] + 2 \sqrt{\delta \cdot \BE[X^2+Y^2]}$.
\end{proposition}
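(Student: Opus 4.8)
The plan is to reduce everything to a one–dimensional tail (layer–cake) integral and then bound the integrand pointwise using the hypothesis, the only real care being to keep the additive $\delta$–contribution integrable. We may assume $\BE[X^2+Y^2]<\infty$, since otherwise the claimed inequality is vacuous; this also guarantees $\BE[|X|],\BE[|Y|]<\infty$, so that $\BE[X-Y]$ is well defined.

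First I would use the tail-integral identity
\[
  \BE[X]-\BE[Y] \;=\; \int_0^\infty\!\big(\BP(X>t)-\BP(Y>t)\big)\,dt \;-\; \int_0^\infty\!\big(\BP(X<-t)-\BP(Y<-t)\big)\,dt ,
\]
so that $|\BE[X-Y]|\le I_+ + I_-$, where $I_+:=\int_0^\infty|\BP(X>t)-\BP(Y>t)|\,dt$ and $I_-:=\int_0^\infty|\BP(X<-t)-\BP(Y<-t)|\,dt$. For a fixed $t\ge 0$, applying the hypothesis with $S=(t,\infty)$ and using the elementary inequality $\max(1-e^{-\eps},\,e^{\eps}-1)\le 2\eps$, valid for $\eps\in[0,1]$, yields $|\BP(X>t)-\BP(Y>t)|\le 2\eps\,\BP(X>t)+\delta$; applying it instead with $S=(-\infty,-t)$ yields $|\BP(X<-t)-\BP(Y<-t)|\le 2\eps\,\BP(X<-t)+\delta$.

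Since the additive $\delta$ is only integrable over a bounded range of $t$, I would fix a threshold $T>0$ and split each of $I_+,I_-$ at $t=T$: on $[0,T]$ use the privacy bounds just derived, and on $[T,\infty)$ use the trivial bound $|\BP(X>t)-\BP(Y>t)|\le\BP(X>t)+\BP(Y>t)$ together with the sharp tail estimate $\int_T^\infty\BP(X>t)\,dt=\BE[\max(X-T,0)]\le\frac{1}{2T}\BE[\max(X,0)^2]$ (the pointwise step $\max(x-T,0)\le\max(x,0)^2/(2T)$ following from $(x-T)^2+T^2\ge 0$), and symmetrically for the negative tail. Using $\int_0^\infty\BP(X>t)\,dt=\BE[\max(X,0)]$ and its analogue $\int_0^\infty\BP(X<-t)\,dt=\BE[\max(-X,0)]$, this gives
\[
  I_+ + I_- \;\le\; 2\eps\,\BE[|X|] \;+\; 2\delta T \;+\; \frac{1}{2T}\,\BE[X^2+Y^2],
\]
where the recombination uses the exact identities $\max(X,0)+\max(-X,0)=|X|$ and $\max(X,0)^2+\max(-X,0)^2=X^2$ (and likewise for $Y$). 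Finally, taking $T=\tfrac12\sqrt{\BE[X^2+Y^2]/\delta}$ (the degenerate case $\BE[X^2+Y^2]=0$ being trivial) makes the last two terms equal to $2\sqrt{\delta\,\BE[X^2+Y^2]}$, which is the claim.

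The argument is essentially routine; the only delicacy is cosmetic, namely obtaining the stated constants $2$. The constant in the $\eps$–term forces use of the tight bound $e^{\eps}-1\le 2\eps$ on $[0,1]$ (this is exactly where the hypothesis $\eps\le 1$ enters), while the constant in the $\sqrt{\delta}$–term forces the sharp tail estimate with the factor $\tfrac{1}{2T}$ instead of the cruder Markov bound $\tfrac{1}{T}\BE[\max(X,0)^2]$, together with the bookkeeping that keeps positive and negative parts separate so the four ``half–square'' expectations reassemble precisely into $\BE[X^2+Y^2]$. I expect the main (minor) obstacle to be exactly this accounting, in particular the choice of where to switch from the privacy bound to the trivial tail bound.
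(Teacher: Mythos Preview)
Your argument is correct, but it is a genuinely different proof from the one in the paper. The paper does not use the layer--cake representation at all: instead it assumes (after a smoothing step) that $X,Y$ admit densities $p_X,p_Y$, writes $p_Y(x)=p_X(x)(1+a(x))+b(x)$ with $|a(x)|\le 2\eps$ and $\int|b|\le\delta$, and then bounds $\int |x|\,|b(x)|\,dx$ by Cauchy--Schwarz as $\sqrt{\int|b|\cdot\int x^2|b(x)|\,dx}$, controlling the latter by $\BE[Y^2]+(1+2\eps)\BE[X^2]$. Your threshold-and-optimise trick plays the role of this Cauchy--Schwarz step. Two practical differences: (i) your argument works directly for arbitrary laws, with no need for the smoothing/WLOG-density step the paper invokes; (ii) your bookkeeping with the sharp inequality $\max(x-T,0)\le\max(x,0)^2/(2T)$ and the positive/negative split recovers the constant $2$ in front of $\sqrt{\delta\,\BE[X^2+Y^2]}$ exactly, whereas the paper's route actually gives $\sqrt{(1+2\eps)}\le\sqrt{3}$ there and then rounds up to $2$. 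Either approach is short; yours is slightly cleaner.
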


The proofs of Propositions~\ref{prop:4th-moment}, \ref{prop:conc_bound}, and \ref{prop:dumb_calculation} are standard and are deferred to \Cref{sec:prelim-app}.

\section{Lower Bound for Private Gaussian Covariance Estimation} \label{sec:covariance}

\subsection{Setup and assumptions} \label{sec:setup}

Our goal will be to prove the following theorem, which formalizes \Cref{thm:covariance}.

\begin{theorem} \label{thm:covariance-formal}
    Let $c_4 < 1$ be a sufficiently small absolute constant. Suppose that $\alpha \le c_4$, $\eps \le 1$, and $\delta \le \frac{\eps^2}{d^2}$.
    Suppose that $M$ is an $(\eps, \delta)$-DP mechanism that takes as input $X = \{X_1, \dots, X_n\}$ where each $X_i \in \BR^d$. Suppose that for any positive definite matrix $\Sigma$, if $X = \{X_1, \dots, X_n\} \overset{i.i.d.}{\sim} \cN(0, \Sigma)$, then with probability at least $2/3$ over the randomness of $M$ and $X_1, \dots, X_n$, $(1-\alpha) \Sigma \preccurlyeq M(X) \preccurlyeq (1+\alpha) \Sigma$. Then, we must have that $n \ge \tilde{\Omega}\left(\frac{d}{\alpha^2} + \frac{d^{3/2}}{\alpha \eps}\right)$.
\end{theorem}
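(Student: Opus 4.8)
The plan is to run the fingerprinting scheme of \Cref{sec:overview} with the choices $f(X,\Sigma)=M(X)-\Sigma$ and $g(X_i,\Sigma)=X_iX_i^\top-\Sigma$ in \eqref{eq:main-statistic}, after two preliminary reductions. First, peel off the $\tilde{\Omega}(d/\alpha^2)$ term: this is the classical non-private sample complexity of spectral covariance estimation (a standard two-point argument), so we may assume $n\ge\Omega(d/\alpha^2)$ and focus on proving $n\ge\tilde{\Omega}(d^{3/2}/(\alpha\eps))$. Second, replace the probability-$2/3$ spectral guarantee by a \emph{bounded} estimator with small \emph{expected} squared Frobenius error: run $M$ on $T=\Theta(\log(d/\alpha))$ disjoint blocks and aggregate by the usual ``output an estimate that is multiplicatively $O(\alpha)$-close to more than half the others'' trick; a Chernoff bound makes the aggregate $M^\ast$ succeed with probability $1-(\alpha/d)^{10}$, disjointness (parallel composition) keeps $M^\ast$ $(\eps,\delta)$-DP, and post-composing with the Frobenius projection onto $\{A=A^\top:\tfrac14 I\preccurlyeq A\preccurlyeq 2I\}$ — a contraction — makes the output bounded without spoiling accuracy whenever $\Sigma$ has all eigenvalues in $[\tfrac12,\tfrac32]$. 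After rescaling $\alpha$ by a constant, it then suffices to prove: if $\Sigma\sim\mathcal P$ for a prior $\mathcal P$ supported (up to mass $2^{-\Omega(d)}$) on matrices with all eigenvalues in $[\tfrac12,\tfrac32]$, and $M$ is $(\eps,\delta)$-DP with $\tfrac14 I\preccurlyeq M(X)\preccurlyeq 2I$ always and $\BE\|M(X)-\Sigma\|_F^2\le O(\beta^2)$, then $n\ge\Omega(d^2/(\beta\eps))$. Plugging in $\beta:=\alpha\sqrt d\le c_4\sqrt d$ recovers the theorem: on the eigenvalue-good event a spectral-$\alpha$ bound forces $\|M(X)-\Sigma\|_F=O(\alpha\sqrt d)$, and the failure event contributes only $\poly(d)\cdot(\alpha/d)^{10}=O(\beta^2)$ to the expected squared error; the $\Theta(\log)$ loss from the $T$ blocks is absorbed into $\tilde{\Omega}$.

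For the prior I would take $\mathcal P=W_d^{-1}\bigl((m-d-1)I,\,m\bigr)$ with $m=Cd$ for a large constant $C$. By \Cref{prop:wishart-expectation} its mean is $I$; by \Cref{lem:wishart-operator-norm-bound} together with standard non-asymptotic singular-value bounds for an $m\times d$ Gaussian (which make the relative eigenvalue fluctuations $O(1/\sqrt C)$), all eigenvalues lie in $[\tfrac12,\tfrac32]$ with probability $1-2^{-\Omega(d)}$, and $\lambda_{\max}(\Sigma)$ and $\lambda_{\min}(\Sigma)^{-1}$ have all moments $O(1)$. The point of this choice is conjugacy: by \Cref{prop:wishart-pdf} the posterior of $\Sigma$ given $X_1,\dots,X_n$ is $W_d^{-1}\bigl((m-d-1)I+n\hat\Sigma,\,m+n\bigr)$, so with $m':=m-d-1=\Theta(d)$,
\[\BE[\Sigma\mid X]=\frac{m'I+n\hat\Sigma}{m'+n},\qquad \hat\Sigma-\BE[\Sigma\mid X]=\frac{m'}{m'+n}(\hat\Sigma-I),\]
and hence $\BE\|\hat\Sigma-\BE[\Sigma\mid X]\|_F^2\le\tfrac{m'^2}{n^2}\bigl(2\BE\|\hat\Sigma-\Sigma\|_F^2+2\BE\|\Sigma-I\|_F^2\bigr)=\tfrac{\Theta(d^2)}{n^2}\cdot O(d)=O(d^3/n^2)$ by \Cref{lem:emp-cov-acc} and the moment bounds (safely assuming $n\ge d$, else the $d/\alpha^2$ bound already finishes).

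With these in hand the fingerprinting calculation is routine. Since $\sum_i Z_i=n\langle M(X)-\Sigma,\hat\Sigma-\Sigma\rangle$, equations \eqref{eq:sum_zi}, \eqref{eq:fingerprint}, Cauchy--Schwarz with the posterior estimate above, and $\BE\|M(X)-\hat\Sigma\|_F^2\le 2\BE\|M(X)-\Sigma\|_F^2+2\BE\|\Sigma-\hat\Sigma\|_F^2=O(\beta^2)$ (using $n\ge\Omega(d^2/\beta^2)$) give $\BE[\sum_i Z_i]=n\bigl(\Theta(d^2/n)\pm O(\beta d^{3/2}/n)\bigr)=\Omega(d^2)$, where the $\Theta(d^2/n)$ is $\BE\|\hat\Sigma-\Sigma\|_F^2$ (via \Cref{lem:emp-cov-acc} and the $\lambda_{\min}$ moment bound) and dominates the cross term exactly because $\beta\le c\sqrt d$. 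On the other side, for $Z_i':=\langle M(X_1,\dots,X_i',\dots,X_n)-\Sigma,\,X_iX_i^\top-\Sigma\rangle$ with $X_i'$ a fresh sample, conditioning on all samples except $X_i$ gives $\BE[Z_i']=0$ (as $\BE[X_iX_i^\top-\Sigma\mid\Sigma]=0$), while \Cref{prop:4th-moment} plus the moment bounds give $\BE[(Z_i')^2]=O(\beta^2)$; since $M(X)$ and $M(X_1,\dots,X_i',\dots,X_n)$ are run on neighboring datasets, for each fixed value of all the samples $Z_i$ and $Z_i'$ are $(\eps,\delta)$-indistinguishable functions of the mechanism's output, so \Cref{prop:dumb_calculation} applied at that conditioning and then integrated yields $|\BE Z_i|\le 2\eps\sqrt{\BE(Z_i')^2}+2\sqrt{\delta(\BE(Z_i')^2+\BE Z_i^2)}=O(\eps\beta)$ once $\delta$ is at most a sufficiently small polynomial in $\alpha\eps/d$ (the crude bound $\BE Z_i^2=O(d^3)$ suffices here). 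Summing over $i$, $\Omega(d^2)=\BE[\sum_i Z_i]=O(n\eps\beta)$, i.e.\ $n\ge\Omega(d^2/(\eps\beta))=\Omega(d^{3/2}/(\alpha\eps))$, which combined with the $\tilde{\Omega}(d/\alpha^2)$ term and the $\Theta(\log)$ factor from the blocks is exactly the claim.

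The main obstacle, and the only genuinely creative part, is the choice of prior: recognizing that an Inverse-Wishart prior with $\Theta(d)$ degrees of freedom simultaneously keeps $\Sigma$ well-conditioned and --- via conjugacy --- makes $\BE[\Sigma\mid X]$ the explicit shrinkage $\tfrac{m'I+n\hat\Sigma}{m'+n}$ of $\hat\Sigma$, which is precisely what is needed to control $\BE\|\hat\Sigma-\BE[\Sigma\mid X]\|_F^2$; everything else is bookkeeping (the probability-$2/3$-to-expectation reduction, the moment bounds, the $\delta$-term estimate). The one quantitative point to watch is that the hypothesis $\alpha\le c_4$, equivalently $\beta\le c\sqrt d$, is exactly the regime in which $\BE\|\hat\Sigma-\Sigma\|_F^2=\Theta(d^2/n)$ beats the cross term $O(\beta d^{3/2}/n)$, so this argument does not by itself extend past $\alpha=\Theta(1)$.
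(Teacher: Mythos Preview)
Your proposal is correct and follows essentially the same route as the paper: Inverse--Wishart prior with $\Theta(d)$ degrees of freedom, the fingerprinting statistics $Z_i=\langle M(X)-\Sigma,\,X_iX_i^\top-\Sigma\rangle$, the decomposition \eqref{eq:sum_zi}--\eqref{eq:fingerprint}, and the conjugate-prior posterior computation yielding $\BE\|\hat\Sigma-\BE[\Sigma\mid X]\|_F^2=O(d^3/n^2)$. The cosmetic differences (taking $m=Cd$ with large $C$ rather than $m=2d$, projecting onto $\{\tfrac14 I\preccurlyeq A\preccurlyeq 2I\}$ rather than clipping the Frobenius norm, and working with a second-moment error assumption rather than a fourth-moment one) are all fine.

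One quantitative point to fix: the ``crude bound $\BE Z_i^2=O(d^3)$'' does \emph{not} suffice under the theorem's hypothesis $\delta\le\eps^2/d^2$, since then $\sqrt{\delta\cdot\BE Z_i^2}=O(\eps\sqrt d)$, which is not $O(\eps\beta)=O(\eps\alpha\sqrt d)$ when $\alpha$ is small. You need the sharper bound $\BE Z_i^2=O(\beta^2 d^2)$, which the paper gets via Cauchy--Schwarz: $\BE Z_i^2\le\sqrt{\BE\|M(X)-\Sigma\|_F^4\cdot\BE\|X_iX_i^\top-\Sigma\|_F^4}$. Your own boosted-and-projected estimator already yields $\BE\|M^\ast(X)-\Sigma\|_F^4=O(\beta^4)$ (same argument as for the second moment, since on failure the projected output still has $\|M^\ast-\Sigma\|_F=O(\sqrt d)$ and the failure probability is $(\alpha/d)^{10}$), so the fix is immediate; just don't discard that fourth-moment control.
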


We consider sampling $\Sigma$ from some prior distribution $p_0$ - we will choose an Inverse Wishart prior distribution.
Specifically, we set $p_0 \sim W_d^{-1}(\frac{I}{m}, m)$, where $m = 2d$. By Lemma~\ref{lem:wishart-operator-norm-bound} and a simple scaling, we have 
\begin{equation} \label{eq:prior-bound-1}
    \BP_{\Sigma \sim p_0}(\|\Sigma\|_{op} \ge x) \le (e^2/x)^{d/2},
\end{equation}
and if $d$ is sufficiently large, by \Cref{lem:wishart-operator-norm-bound},
\begin{equation} \label{eq:prior-bound-2}
    \BE[\|\Sigma\|_{op}^4] \le G_4 = O(1).
\end{equation}


Fix an $(\eps, \delta)$-DP mechanism $M: (\BR^d)^n \to \BR^{d \times d}$, that takes as input a dataset $X = \{X_1, \dots, X_n\}$ of i.i.d. samples from $\cN(0, \Sigma).$
Let $\gamma = 10 \sqrt{d} \cdot \alpha$, and suppose we have a constant-probability bound on the Frobenius error for all $\Sigma$ with spectral norm at most $10$:
\begin{equation} \label{eq:assumption}
    \BP_{X, M} \left(\|M(X) - \Sigma\|_F \le \gamma \right) \ge 2/3, \hspace{0.5cm} \text{if} \hspace{0.5cm} \|\Sigma\|_{op} \le 10.
\end{equation}
Note that this is a weaker assumption than in \Cref{thm:covariance-formal}, since $(1-\alpha)\Sigma \preccurlyeq M(X) \preccurlyeq (1+\alpha) \Sigma$ and $\|\Sigma\|_{op} \le 10$ implies that $\|M(X)-\Sigma\|_F \le \sqrt{d} \cdot \|M(X)-\Sigma\|_{op} \le \sqrt{d} \cdot \alpha \cdot \|\Sigma\|_{op} \le \gamma$.
Then, if $\Sigma$ is drawn from the inverse Wishart prior as above, we can strengthen this to assuming
\begin{equation} \label{eq:assumption-stronger}
    \BE_{\Sigma, X, M}(\|M(X)-\Sigma\|_F^4) \le \gamma^4,
\end{equation}
at the cost of requiring $O\left(\log \frac{d}{\gamma}\right)$ times as many samples. We defer the proof of this to \Cref{subsec:assumptions}. We will show that under the stronger assumption \eqref{eq:assumption-stronger}, $M$ requires $\Omega\left(\frac{d^2}{\gamma^2}+\frac{d^2}{\gamma \eps}\right)$ samples, so under \eqref{eq:assumption}, $M$ requires $\Omega\left(\frac{d^2}{\gamma^2 \cdot \log(d/\gamma)}+\frac{d^2}{\gamma \eps \cdot \log(d/\gamma)}\right)$ samples.
This reduction will assume that $\gamma \le c \sqrt{d}$ and $\gamma \ge e^{-c d}$ for some small constant $c$, and that $d$ is a sufficiently large constant.

\medskip


Moreover, we can remove the assumption that $d$ is at least a sufficiently large constant, and that $\gamma \ge e^{-\Omega(d)}$. Indeed, in the case of $d = 1$, there is a known $\Omega\left(\frac{1}{\alpha^2}+\frac{1}{\alpha \eps}\right)$ lower bound~\cite{KarwaV18, KamathLSU19} (which trivially extends to higher dimension $d$), and if $\alpha^{-1} \ge \gamma^{-1} \ge e^{\Omega(d)}$ then the factor of $d$ can be absorbed as a $\log(1/\alpha)$ factor.

\medskip

Hence, our goal in the remainder of the section is to prove the following theorem.

\begin{theorem} \label{thm:main}
    There exist constants $c_2, c_3 < 1 < C_4$ with the following properties. Suppose $d \ge C_4$, $m = 2d$, $\gamma \le c_2 \sqrt{d}, \eps \le 1,$ and $\delta \le \frac{\eps^2}{d^2}$.
    Let $M$ be an $(\eps, \delta)$-DP mechanism that takes as input $X = \{X_1, \dots, X_n\}$ where each $X_i \in \BR^d$. Suppose that if $X_1, \dots, X_n \overset{i.i.d.}{\sim} \cN(0, \Sigma)$, where $\Sigma \sim W_d^{-1}(\frac{I}{m}, m)$, then $\BE_{\Sigma, X, M}(\|M(X)-\Sigma\|_F^4) \le \gamma^4$. Then, we must have that $n \ge c_3 \cdot \max\left(\frac{d^2}{\gamma^2}, \frac{d^2}{\gamma \eps}\right)$.
\end{theorem}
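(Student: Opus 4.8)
The plan is to run the fingerprinting scheme of \Cref{sec:overview} with $f(X_1,\dots,X_n,\Sigma)=M(X)-\Sigma$ and $g(X_i,\Sigma)=X_iX_i^\top-\Sigma$ in \eqref{eq:main-statistic}, so that $Z_i=\langle M(X)-\Sigma,\,X_iX_i^\top-\Sigma\rangle$ and $Z_i'=\langle M(X^{(i)})-\Sigma,\,X_iX_i^\top-\Sigma\rangle$, where $X^{(i)}$ replaces $X_i$ by a fresh $X_i'\sim\cN(0,\Sigma)$. Since $\sum_i Z_i=n\langle M(X)-\Sigma,\,\hat\Sigma-\Sigma\rangle$, the whole argument is a sandwich on $\BE[\sum_i Z_i]$: privacy forces it to be $O(n\gamma\eps)$, accuracy forces it to be $\Omega(d^2)$, and comparing gives $n\ge\Omega(d^2/(\gamma\eps))$. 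The bound $n\ge\Omega(d^2/\gamma^2)$ is obtained separately, from a privacy-free Bayes-risk computation, which also supplies the standing fact $n\ge d$ used below (a short split on $n\lessgtr d$, using $\gamma\le c_2\sqrt d$ with $c_2$ small, reduces to this case).

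\emph{Upper bound via privacy.} Conditioning on $\Sigma$ and on $\{X_j\}_{j\ne i}$, the factor $M(X^{(i)})-\Sigma$ is fixed while $X_i\sim\cN(0,\Sigma)$ is fresh and $\BE[X_iX_i^\top-\Sigma]=0$, so $\BE[Z_i']=0$. Applying \Cref{prop:4th-moment} conditionally and then Cauchy--Schwarz over the prior together with the hypothesis $\BE\|M(X)-\Sigma\|_F^4\le\gamma^4$, \eqref{eq:prior-bound-2}, and $(\Sigma,X^{(i)})\eqD(\Sigma,X)$ gives $\BE[Z_i'^2]\le 2\BE[\|\Sigma\|_{op}^2\,\|M(X^{(i)})-\Sigma\|_F^2]\le 2\sqrt{G_4}\,\gamma^2=O(\gamma^2)$; similarly $Z_i^2\le\|M(X)-\Sigma\|_F^2\cdot\|X_iX_i^\top-\Sigma\|_F^2$ with \Cref{prop:conc_bound} and \eqref{eq:prior-bound-2} gives $\BE[Z_i^2]=O(\gamma^2 d^2)$. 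Since $M$ is $(\eps,\delta)$-DP and, conditioned on $(\Sigma,X_i,X_i',\{X_j\}_{j\ne i})$, both $Z_i$ and $Z_i'$ are the \emph{same} measurable function of $M(X)$, respectively $M(X^{(i)})$, post-processing makes their conditional laws $(\eps,\delta)$-indistinguishable; \Cref{prop:dumb_calculation} and integrating yield $|\BE[Z_i]-\BE[Z_i']|\le 2\eps\BE|Z_i'|+2\sqrt{\delta\,\BE[Z_i^2+Z_i'^2]}=O(\gamma\eps)+O(\gamma d\sqrt\delta)$. As $\delta\le\eps^2/d^2$ this is $O(\gamma\eps)$, so $\BE[Z_i]=O(\gamma\eps)$ for each $i$ and $\BE[\sum_i Z_i]\le O(n\gamma\eps)$.

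\emph{Lower bound via accuracy.} Write $\BE[\sum_i Z_i]=n\big(\BE\langle M(X)-\hat\Sigma,\,\hat\Sigma-\Sigma\rangle+\BE\|\hat\Sigma-\Sigma\|_F^2\big)$ as in \eqref{eq:sum_zi}. By \Cref{lem:emp-cov-acc} conditionally on $\Sigma$ and \Cref{prop:wishart-expectation} (which gives $\BE\Tr\Sigma=\Theta(d)$, hence $\Theta(d^2)\le\BE[\Tr(\Sigma)^2]\le O(d^2)$ by \eqref{eq:prior-bound-2}), the second term is $\Theta(d^2/n)$. For the first, since $M(X)-\hat\Sigma$ depends only on $X$ we may replace $\Sigma$ by $\BE[\Sigma\mid X]$ and apply Cauchy--Schwarz as in \eqref{eq:fingerprint}; here the Inverse Wishart prior is essential, because by conjugacy (\Cref{prop:wishart-pdf}) the posterior of $\Sigma$ given $X$ is again Inverse Wishart with scale incremented by $n\hat\Sigma$ and $n$ extra degrees of freedom, so \Cref{prop:wishart-expectation} gives $\BE[\Sigma\mid X]=(1+O(d/n))\hat\Sigma+O(d/n)\cdot I$. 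Combined with $\BE\|\hat\Sigma\|_F^2=O(d)$ for $n\ge d$ (from $\BE\|\Sigma\|_F^2=O(d)$ and $\BE\|\hat\Sigma-\Sigma\|_F^2=O(d^2/n)$), this yields $\BE\|\hat\Sigma-\BE[\Sigma\mid X]\|_F^2=O(d^3/n^2)$; together with $\BE\|M(X)-\hat\Sigma\|_F^2\le 2\gamma^2+O(d^2/n)$ the first term is $\pm O(\gamma d^{3/2}/n+d^{5/2}/n^{3/2})$. Hence $\BE[\sum_i Z_i]=\Theta(d^2)\pm O(\gamma d^{3/2}+d^{5/2}/\sqrt n)$, which is $\Omega(d^2)$ once $\gamma\le c_2\sqrt d$ and $n\ge\Omega(d)$ for suitably small $c_2$.

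\emph{The non-private bound and conclusion.} For $n\ge\Omega(d^2/\gamma^2)$: since $M(X)$ is $X$-measurable, $\gamma^2\ge\BE\|M(X)-\Sigma\|_F^2\ge\BE\|\BE[\Sigma\mid X]-\Sigma\|_F^2=\BE\sum_{i,j}\Var(\Sigma_{ij}\mid X)$, and the Inverse Wishart variance formula for the posterior lower-bounds this by $\Omega\big(n^2\BE[\Tr(\hat\Sigma)^2]/(n+d)^3\big)=\Omega(d^2/n)$ (the case $n<d$ is handled by the same estimate and, with $\gamma\le c_2\sqrt d$, is incompatible with the conclusion failing), giving $n\ge\Omega(d^2/\gamma^2)$; as $\gamma\le c_2\sqrt d$ this also gives $n\ge d$, justifying the standing assumption above. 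Combining with the previous two paragraphs, $\Omega(d^2)\le|\BE[\sum_i Z_i]|\le O(n\gamma\eps)$ forces $n\ge\Omega(d^2/(\gamma\eps))$, and together $n\ge c_3\max(d^2/\gamma^2,\,d^2/(\gamma\eps))$ for an appropriate constant $c_3$ (and $c_2,C_4$). The main obstacle is the bound on $\BE\|\hat\Sigma-\BE[\Sigma\mid X]\|_F^2$ in the third paragraph: it is exactly what makes the fingerprinting calculation close with the optimal $d^{3/2}/(\alpha\eps)$ rate, and it hinges entirely on the conjugate (Inverse Wishart) choice of prior; the secondary nuisance is the bookkeeping of the parameter regimes ($n$ vs.\ $d$, $\gamma$ vs.\ $\sqrt d$, and the relation among $c_2,c_3$).
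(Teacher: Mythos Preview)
Your proposal is correct and follows essentially the same route as the paper: the same fingerprinting statistics $Z_i,Z_i'$, the same privacy-based upper bound via \Cref{prop:4th-moment}, \Cref{prop:conc_bound}, and \Cref{prop:dumb_calculation}, and the same accuracy-based lower bound via the decomposition \eqref{eq:sum_zi}, the Cauchy--Schwarz step \eqref{eq:fingerprint}, and the Inverse Wishart conjugacy computation of $\BE[\Sigma\mid X]$.

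The one place where you diverge is the non-private bound $n\ge\Omega(d^2/\gamma^2)$. You appeal directly to the closed-form variance of the Inverse Wishart posterior to lower-bound $\BE\sum_{i,j}\Var(\Sigma_{ij}\mid X)$; this is valid (for $\nu>d+3$ the covariance of $W_d^{-1}(\Psi,\nu)$ has the standard formula whose trace is $\Theta\big((\|\Psi\|_F^2+\Tr(\Psi)^2)/(\nu-d)^3\big)$, and with $\Psi=mI+n\hat\Sigma$, $\nu=m+n$, $\BE[\Tr(\hat\Sigma)^2]\ge(\BE\Tr\Sigma)^2=\Theta(d^2)$ gives $\Omega(d^2/n)$), but it relies on a moment formula that is not among the paper's stated preliminaries. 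The paper avoids this by a triangle inequality: it bounds $\BE\|\BE[\Sigma\mid X]-\Sigma\|_F^2$ from below using $\BE\|\hat\Sigma-\Sigma\|_F^2\le 2\big(\BE\|\BE[\Sigma\mid X]-\Sigma\|_F^2+\BE\|\BE[\Sigma\mid X]-\hat\Sigma\|_F^2\big)$ together with \Cref{lem:emp-cov-acc} and the already-computed bound on $\BE\|\BE[\Sigma\mid X]-\hat\Sigma\|_F^2$. That route reuses ingredients already in hand and needs no additional Inverse Wishart moment identity; your route is more direct but asks for one extra (standard) input. Either way, the $n\ge d$ fact you invoke for the main sandwich then follows from $\gamma\le c_2\sqrt d$ with $c_2$ small, exactly as in the paper.
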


Indeed, by replacing $\gamma$ with $10 \sqrt{d} \cdot \alpha$, and dividing the sample complexity by $\log(d/\gamma) = \Theta(\log(\sqrt{d}/\alpha))$ to replace the assumption~\eqref{eq:assumption-stronger} with \eqref{eq:assumption}, we see that \Cref{thm:main} implies \Cref{thm:covariance-formal}.

\paragraph{Fingerprinting statistics:}
As in other private estimation lower bounds (see, e.g., \cite{survey}), we also consider drawing additional i.i.d. samples $X' = \{X_1', \dots, X_n'\}$ from $\cN(0, \Sigma).$
For each $i \in [n]$, define
\begin{align*}
    Z_i &:= \langle M(X)-\Sigma, X_i X_i^\top - \Sigma \rangle, \\
    Z_i' &:= \langle M(X_{\sim i})-\Sigma, X_i X_i^\top - \Sigma \rangle.
\end{align*}
Here, $X_{\sim i}$ is the dataset where we replace $X_i$ with $X_i'$, so it differs in exactly one location from $X$.

In the next two subsections, we prove an upper and lower bound, respectively, on the quantity $\BE[\sum_{i=1}^n Z_i]$, which will form a contradiction unless $n \ge \Omega\left(\frac{d^2}{\gamma^2}+\frac{d^2}{\gamma \eps}\right)$.

\subsection{Upper bound}

In this subsection, we provide an upper bound for $\BE_{\Sigma, X, M}[\sum_{i=1}^n Z_i]$.
First, we note the following.

\begin{proposition} \label{prop:Zi_prime_bound}
    Assume the conditions of \Cref{thm:main}. Then, for every $i \in [n]$, $\BE[Z_i'] = 0$ and $\BE[(Z_i')^2] \le O(\gamma^2)$, where the expectation and variance are over the randomness of $\Sigma, X, X',$ and $M$.
\end{proposition}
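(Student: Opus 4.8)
The plan is to exploit the one fact that makes the "primed" statistic tractable: since $X_i'$ replaces $X_i$ in $X_{\sim i}$, the matrix $M(X_{\sim i})-\Sigma$ depends only on $\Sigma$ and on $X_1,\dots,X_{i-1},X_i',X_{i+1},\dots,X_n$ (plus the internal coins of $M$), and is therefore conditionally independent of $X_i$ given $\Sigma$. First I would condition on $\Sigma$ and on the matrix $A := M(X_{\sim i})-\Sigma$. Conditioned on this, $X_i\sim\cN(0,\Sigma)$ is still fresh, so $\BE[X_iX_i^\top-\Sigma\mid \Sigma, A]=0$, and hence $\BE[Z_i'\mid\Sigma,A]=\langle A,\,\BE[X_iX_i^\top-\Sigma\mid\Sigma,A]\rangle=0$. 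Taking expectations over $\Sigma$ and $A$ via the tower rule gives $\BE[Z_i']=0$.

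For the second moment, I would again condition on $(\Sigma,A)$ and apply \Cref{prop:4th-moment} with $P=A$ and $X=X_i\sim\cN(0,\Sigma)$ (the statement is for general $d\times d$ matrices $P$, and in any case $\langle A, X_iX_i^\top-\Sigma\rangle$ only depends on the symmetric part of $A$, whose Frobenius norm is at most $\|A\|_F$). This yields $\BE[(Z_i')^2\mid\Sigma,A]\le 2\|\Sigma\|_{op}^2\|A\|_F^2$, so after taking the outer expectation,
\[
\BE[(Z_i')^2]\ \le\ 2\,\BE\!\left[\|\Sigma\|_{op}^2\,\|M(X_{\sim i})-\Sigma\|_F^2\right]\ \le\ 2\sqrt{\BE\big[\|\Sigma\|_{op}^4\big]\cdot\BE\big[\|M(X_{\sim i})-\Sigma\|_F^4\big]},
\]
where the last step is Cauchy--Schwarz. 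The first factor is $O(1)$ by \eqref{eq:prior-bound-2}.

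It remains to bound $\BE[\|M(X_{\sim i})-\Sigma\|_F^4]$. The key point is that $X_{\sim i}$ consists of $n$ i.i.d.\ draws from $\cN(0,\Sigma)$, jointly with $\Sigma\sim W_d^{-1}(\tfrac{I}{m},m)$ — exactly the same joint law as $(\Sigma, X)$, i.e.\ $(\Sigma, X_{\sim i})\eqD(\Sigma, X)$ (with $M$ given independent fresh coins). Hence $\BE[\|M(X_{\sim i})-\Sigma\|_F^4]=\BE[\|M(X)-\Sigma\|_F^4]\le\gamma^4$ by the hypothesis \eqref{eq:assumption-stronger} of \Cref{thm:main}. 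Plugging in gives $\BE[(Z_i')^2]\le 2\sqrt{G_4\,\gamma^4}=O(\gamma^2)$, as desired.

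There is no genuine technical obstacle here; the only thing to be careful about is the bookkeeping around independence and exchangeability — namely that $X_i$ is independent of $M(X_{\sim i})$ given $\Sigma$ (so the mean vanishes) and that $X_{\sim i}$ has the same distribution as $X$ (so the accuracy hypothesis transfers). These are precisely the structural features the fingerprinting construction is designed around, and the rest is two invocations of \Cref{prop:4th-moment} and Cauchy--Schwarz.
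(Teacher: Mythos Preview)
Your proposal is correct and follows essentially the same approach as the paper: condition on $\Sigma$ (and the data entering $M(X_{\sim i})$) to see that $X_i$ is fresh so the mean vanishes, then invoke \Cref{prop:4th-moment} and Cauchy--Schwarz for the second moment, using $\BE[\|\Sigma\|_{op}^4]\le G_4$ and the accuracy hypothesis. You even make explicit the exchangeability step $(\Sigma,X_{\sim i})\eqD(\Sigma,X)$ that justifies $\BE[\|M(X_{\sim i})-\Sigma\|_F^4]\le\gamma^4$, which the paper uses without comment.
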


\begin{proof}
    We first prove that $\BE[Z_i'] = 0$: in fact we show that $\BE[Z_i'|\Sigma] = 0$. To do so, note that $M(X_{\sim i})$ and $X_i$ are conditionally independent on $\Sigma$. Therefore, since $\BE[X_i X_i^\top|\Sigma] = \Sigma$, we have 
\[\BE[\langle M(X_{\sim i})-\Sigma, X_i X_i^\top - \Sigma \rangle|\Sigma] = \langle \BE[M(X_{\sim i})|\Sigma]-\Sigma, \BE[X_i X_i^\top|\Sigma] - \Sigma \rangle = 0.\]

    Next, we bound $\BE[(Z_i')^2|\Sigma]$.
    By Proposition~\ref{prop:4th-moment}, replacing $P$ with $M(X_{\sim i})-\Sigma$, we have that
\[\BE\left[\langle M(X_{\sim i})-\Sigma, X_i X_i^\top - \Sigma \rangle^2|\Sigma, X_{\sim i}, M\right] \le 2 \cdot \|\Sigma\|_{op}^2 \cdot \|M(X_{\sim i})-\Sigma\|_F^2,\]
    which means that
\[\BE[(Z_i')^2] \le 2 \cdot \BE\left[\|\Sigma\|_{op}^2 \cdot \|M(X_{\sim i})-\Sigma\|_F^2\right] \le 2 \cdot \sqrt{\BE[\|\Sigma\|_{op}^4] \cdot \BE[\|M(X_{\sim i}) - \Sigma\|_F^4]},\]
    where the last inequality is Cauchy-Schwarz.

    Since $\BE[\|\Sigma\|_{op}^4] \le G_4$ (by \eqref{eq:prior-bound-2}) and $\BE[\|M(X_{\sim i}) - \Sigma\|_F^4] \le \gamma^4$, this means $\BE[(Z_i')^2] \le O(\gamma^2)$.
\end{proof}

\begin{lemma} \label{lem:ub-main}
    Assume the conditions of \Cref{thm:main}. Then, $\BE[Z_i] \le O(\gamma \cdot \eps),$ where the expectation is over the randomness of $\Sigma, X, X', M$.
\end{lemma}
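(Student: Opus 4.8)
The plan is to use $(\eps,\delta)$-differential privacy of $M$ to transfer the control on $Z_i'$ from \Cref{prop:Zi_prime_bound} over to $Z_i$, via \Cref{prop:dumb_calculation}. Let $W := (\Sigma, X_1,\dots,X_n,X_1',\dots,X_n')$ denote all of the data. Conditioned on $W$, both $Z_i$ and $Z_i'$ are obtained by applying the \emph{same} deterministic map $A \mapsto \langle A-\Sigma,\ X_iX_i^\top - \Sigma\rangle$ to $M(X)$ and to $M(X_{\sim i})$ respectively; since $X$ and $X_{\sim i}$ are neighboring datasets, $(\eps,\delta)$-DP (applied in both directions) implies that the conditional laws of $Z_i$ and $Z_i'$ given $W$ satisfy the two-sided hypothesis of \Cref{prop:dumb_calculation}. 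Applying that proposition conditionally on $W$ (legitimate since $\eps\le 1$ and $\delta\le 1/2$), and then taking expectation over $W$ using $\BE[Z_i']=0$, the triangle inequality, and concavity of $\sqrt{\cdot}$, gives
\[
 |\BE[Z_i]| \ \le\ 2\eps\,\BE\bigl[|Z_i|\bigr] \ +\ 2\sqrt{\delta\cdot\BE\bigl[Z_i^2+(Z_i')^2\bigr]}.
\]

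The next step is to bound the two moments on the right. Because $M(X)$ is correlated with $X_i$, we cannot apply \Cref{prop:4th-moment} to $Z_i$ directly, but the crude bound $Z_i^2 \le \|M(X)-\Sigma\|_F^2\,\|X_iX_i^\top-\Sigma\|_F^2$ suffices: a further Cauchy--Schwarz over all the randomness, the hypothesis $\BE[\|M(X)-\Sigma\|_F^4]\le\gamma^4$, the bound $\BE[\|X_i\|^8\mid\Sigma]\le H_4\|\Sigma\|_{op}^4 d^4$ from \Cref{prop:conc_bound}, and the prior moment bound $\BE[\|\Sigma\|_{op}^4]\le G_4$ from \eqref{eq:prior-bound-2} together give $\BE[Z_i^2]\le O(\gamma^2 d^2)$. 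Combined with $\BE[(Z_i')^2]\le O(\gamma^2)$ from \Cref{prop:Zi_prime_bound} and the assumption $\delta\le\eps^2/d^2$, the second term above is $O(\gamma d\sqrt{\delta}) = O(\gamma\eps)$.

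It remains to show $\BE[|Z_i|] = O(\gamma)$; naive Cauchy--Schwarz only yields $O(\gamma d)$, so a short bootstrap is needed. Applying \Cref{prop:dumb_calculation} once more, now to the pair $(|Z_i|,|Z_i'|)$ (still DP-close, as $|\cdot|$ is a post-processing), gives $\BE[|Z_i|] - \BE[|Z_i'|] \le 2\eps\,\BE[|Z_i|] + 2\sqrt{\delta\,\BE[Z_i^2+(Z_i')^2]} \le 2\eps\,\BE[|Z_i|] + O(\gamma\eps)$. Since $\BE[|Z_i'|]\le\sqrt{\BE[(Z_i')^2]}=O(\gamma)$, rearranging yields $(1-2\eps)\,\BE[|Z_i|]\le O(\gamma)$, so $\BE[|Z_i|]=O(\gamma)$ whenever $\eps$ is at most a fixed small constant. (When $\eps=\Theta(1)$ the target $O(\gamma\eps)$ is just $\Theta(\gamma)$, and the same estimate goes through after splitting the layer-cake integral for $\BE[|Z_i|\mid W]$ at a threshold and using $\BE[Z_i^2\mid W]$ to control the tail, in place of the bootstrap.) Substituting $\BE[|Z_i|]=O(\gamma)$ into the first displayed inequality gives $|\BE[Z_i]|\le 2\eps\cdot O(\gamma) + O(\gamma\eps) = O(\gamma\eps)$.

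I expect the only real obstacle to be bookkeeping rather than any genuine difficulty: one must be careful that the DP inequality is invoked for the \emph{fixed} neighboring pair $X, X_{\sim i}$ (so it holds conditionally on $W$ regardless of how $M$'s internal coins are coupled between the two runs), and one must be content with the lossy bound $\BE[Z_i^2]=O(\gamma^2 d^2)$, which is acceptable only because $\delta$ is polynomially small in $d$. The substance is already baked into the construction: the choice of $f,g$ and the conjugate prior make $Z_i'$ an inner product of conditionally independent pieces with variance $O(\gamma^2)$, and it is precisely this that powers both $\BE[Z_i']=0$ and the bootstrap.
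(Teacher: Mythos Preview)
Your argument is correct in substance, but you take an unnecessarily roundabout path. The paper applies \Cref{prop:dumb_calculation} with the roles reversed---taking $X=Z_i'$ and $Y=Z_i$ (legitimate since the DP hypothesis holds in both directions)---which gives
\[
|\BE[Z_i]-\BE[Z_i']| \le 2\eps\,\BE[|Z_i'|] + 2\sqrt{\delta\,\BE[Z_i^2+(Z_i')^2]}.
\]
The first term is then $O(\gamma\eps)$ immediately, since $\BE[|Z_i'|]\le\sqrt{\BE[(Z_i')^2]}=O(\gamma)$ from \Cref{prop:Zi_prime_bound}; no bootstrap is needed, and the bound holds uniformly for all $\eps\le 1$. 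Your detour through $(1-2\eps)\,\BE[|Z_i|]\le O(\gamma)$ only closes for $\eps<1/2$, and the parenthetical layer-cake fix for $\eps=\Theta(1)$ is plausible but not spelled out. (Incidentally, the same role swap would repair your bootstrap too: applying \Cref{prop:dumb_calculation} to $(|Z_i|,|Z_i'|)$ with $X=|Z_i'|$ yields $\BE[|Z_i|]\le(1+2\eps)\,\BE[|Z_i'|]+O(\gamma\eps)=O(\gamma)$ at once.) The remaining pieces---conditioning on $W$, the crude second-moment bound $\BE[Z_i^2]=O(\gamma^2 d^2)$ via Cauchy--Schwarz and \Cref{prop:conc_bound}, and the use of $\delta\le\eps^2/d^2$---match the paper exactly.
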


\begin{proof}
    Suppose that we fix $X_1, \dots, X_n$ and $X_i'$. Then, by definition of privacy, for any set $S$, $\BP(M(X) \in S) = e^{\pm \eps} \cdot \BP(M(X_{\sim i} \in S)) \pm \delta$. As a result, since $Z_i$ and $Z_i'$ are the same function applied to $M(X)$ and $M(X_{\sim i})$, respectively (for fixed $X_1, \dots, X_n, X_i'$), this means $\BP(Z_i \in S) = e^{\pm \eps} \cdot \BP(Z_i' \in S) \pm \delta$. Hence, this still holds even when we remove the conditioning on $X_1, \dots, X_n, X_i'$.

    Therefore, by Proposition~\ref{prop:dumb_calculation}, we have that 
\[\left|\BE[Z_i] - \BE[Z_i']\right| \le 2 \eps \cdot \BE[|Z_i'|] + 2 \sqrt{\delta \cdot \BE[Z_i^2 + (Z_i')^2]} \le 2(\eps + \sqrt{\delta}) \cdot \sqrt{\BE[(Z_i')^2]} +  2 \sqrt{\delta \cdot \BE[Z_i^2]}.\]
    So, by Proposition \ref{prop:Zi_prime_bound}, we have that
\begin{equation} \label{eq:b1}
    \BE[Z_i] \le O((\eps+\sqrt{\delta}) \gamma) + 2 \sqrt{\delta \cdot \BE[Z_i^2]}.
\end{equation}
    However, note that $Z_i = \langle M(X) - \Sigma, X_i X_i^\top - \Sigma \rangle,$ which is at most $\|M(X)-\Sigma\|_F \cdot \|X_i X_i^\top - \Sigma\|_F$ in magnitude.
    Hence, 
\begin{equation} \label{eq:b2}
    \BE[Z_i^2] \le \BE[\|M(X)-\Sigma\|_F^2 \cdot \|X_i X_i^\top - \Sigma\|_F^2] \le \sqrt{\BE[\|M(X)-\Sigma\|_F^4] \cdot \BE[\|X_i X_i^\top - \Sigma\|_F^4]}.
\end{equation}

    We know that $\BE[\|M(X)-\Sigma\|_F^4] \le O(\gamma^4)$. Moreover, we can bound
\begin{equation} \label{eq:b4}
    \BE[\|X_i X_i^\top - \Sigma\|_F^4] \le O(\BE[\|X_i X_i^\top\|_F^4 + \BE[\|\Sigma\|_F^4]) = O\left(\BE[\|X_i\|^8] + \BE[\|\Sigma\|_F^4]\right),
\end{equation}
    which by \Cref{prop:conc_bound} with $k = 4$ is at most $O(d^4 \cdot \BE[\|\Sigma\|_{op}^4] + d^2 \cdot \BE[\|\Sigma\|_{op}^4]) \le O(d^4)$. Hence, we can combine Equations~\eqref{eq:b2} and~\eqref{eq:b4} to obtain $\BE[Z_i^2] \le O(\gamma^2 \cdot d^2)$.

    In summary, this means that $\BE[Z_i] \le O(\eps + \sqrt{\delta}) \gamma + O(\sqrt{\delta \cdot \gamma^2 d^2})$.
    By our assumption on $\delta$, this is at most $O(\gamma \cdot \eps)$.
\end{proof}

Because Lemma~\ref{lem:ub-main} holds for all $i \in [n]$, and because of \eqref{eq:prior-bound-2}, we have the following corollary.

\begin{corollary} \label{cor:ub-main}
    Assume the conditions of \Cref{thm:main}. Then, for some constant $C_2 > 0$, we have $\BE\left[\sum_{i=1}^n Z_i\right] \le C_2 \cdot \gamma \cdot \eps \cdot n,$ where the expectation is taken over the randomness of $\Sigma, X, X', M$.
\end{corollary}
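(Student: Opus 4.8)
\textbf{Proof proposal for Corollary~\ref{cor:ub-main}.}
The plan is a one-step deduction from Lemma~\ref{lem:ub-main} by linearity of expectation. Lemma~\ref{lem:ub-main} already supplies, for each fixed index $i \in [n]$, the bound $\BE[Z_i] \le O(\gamma \eps)$, where the expectation is taken jointly over $\Sigma \sim W_d^{-1}(\tfrac{I}{m}, m)$, the samples $X, X'$, and the internal randomness of $M$. The first thing I would pin down is that the hidden constant there is a genuine absolute constant, uniform in $i$: tracing through the proof of Lemma~\ref{lem:ub-main}, the constant comes only from (i) the universal constant in Proposition~\ref{prop:dumb_calculation}, (ii) the constant in the variance bound $\BE[(Z_i')^2] \le O(\gamma^2)$ of Proposition~\ref{prop:Zi_prime_bound}, which in turn uses the constant $G_4$ in \eqref{eq:prior-bound-2} (an absolute constant since $d \ge C_4$) and the constant in Proposition~\ref{prop:4th-moment}, and (iii) the constant in the crude estimate $\BE[Z_i^2] \le O(\gamma^2 d^2)$, which uses Proposition~\ref{prop:conc_bound} with $k=4$ together with the hypothesis $\delta \le \eps^2/d^2$ to absorb the $\sqrt{\delta \cdot \BE[Z_i^2]}$ term. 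None of these depend on $i$, and by symmetry (the $X_j$ are i.i.d.\ and $X_i'$ is a fresh i.i.d.\ draw) the bound is literally the same for every $i$. So there is a single constant $C_2 > 0$ with $\BE[Z_i] \le C_2 \gamma \eps$ for all $i \in [n]$.

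Given this, I would simply write
\[
\BE\!\left[\sum_{i=1}^n Z_i\right] \;=\; \sum_{i=1}^n \BE[Z_i] \;\le\; \sum_{i=1}^n C_2 \gamma \eps \;=\; C_2 \cdot \gamma \cdot \eps \cdot n,
\]
which is exactly the claimed inequality. The appeal to \eqref{eq:prior-bound-2} mentioned in the statement is already subsumed in Lemma~\ref{lem:ub-main} (it enters through Proposition~\ref{prop:Zi_prime_bound}); I would mention it only to explain why $C_2$ is absolute rather than dimension-dependent.

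There is essentially no obstacle at this stage — the corollary is a packaging step. All the real work sits in Lemma~\ref{lem:ub-main}, and the only point requiring a moment's care is the uniformity of the constant across the $n$ summands, which I would dispatch with the symmetry remark above. The purpose of the corollary is to set up the contradiction in the next subsection: it will be contrasted with a lower bound $\BE[\sum_{i=1}^n Z_i] = n \cdot \big(\Theta(d^2/n) \pm O(\sqrt{\gamma^2 d^3/n^2})\big) \ge \Omega(d^2)$ valid when $\gamma \le c_2\sqrt{d}$, and comparing $C_2 \gamma \eps n \ge \Omega(d^2)$ (and the non-private piece from Lemma~\ref{lem:emp-cov-acc}) yields $n \ge c_3 \max(d^2/\gamma^2, d^2/(\gamma\eps))$ as in Theorem~\ref{thm:main}.
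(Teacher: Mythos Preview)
Your proposal is correct and matches the paper's approach exactly: the paper derives the corollary in one line from Lemma~\ref{lem:ub-main} by summing over $i$, noting that the bound holds for all $i$ (and citing \eqref{eq:prior-bound-2} for the absoluteness of the constant, just as you observed). Your additional remarks on uniformity of the constant and the role of \eqref{eq:prior-bound-2} are accurate and, if anything, more explicit than the paper itself.
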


\subsection{Lower Bound}

In this section, we prove a lower bound on $\BE\left[\sum_{i=1}^n Z_i\right]$.

Let $\hat{\Sigma} = \frac{1}{n} \sum_{i=1}^n X_i X_i^\top$.
Note that we can write $\BE\left[\sum_{i=1}^n Z_i\right] = n \cdot \langle M(X)-\Sigma, \hat{\Sigma}-\Sigma \rangle,$ so it suffices to prove a lower bound on $\BE[\langle M(X)-\Sigma, \hat{\Sigma}-\Sigma \rangle]$.


\begin{lemma} \label{lem:z-stat-lb}
    For any $\kappa > 0$, we have that
\[
    \BE[\langle M(X)-\Sigma, \hat{\Sigma}-\Sigma\rangle] \ge \BE[\|\Sigma - \hat{\Sigma}\|_F^2] - \sqrt{\BE[\|M(X) - \hat{\Sigma}\|_F^2] \cdot \BE_X\left[\big\|\BE[\Sigma|X]-\hat{\Sigma}\big\|_F^2\right]}.
\]
\end{lemma}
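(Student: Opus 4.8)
The plan is to split $M(X)-\Sigma = \bigl(M(X)-\hat{\Sigma}\bigr) + \bigl(\hat{\Sigma}-\Sigma\bigr)$ and expand the Frobenius inner product. This gives the pointwise identity
\[\langle M(X)-\Sigma,\ \hat{\Sigma}-\Sigma\rangle = \langle M(X)-\hat{\Sigma},\ \hat{\Sigma}-\Sigma\rangle + \|\hat{\Sigma}-\Sigma\|_F^2,\]
so after taking expectations over $\Sigma, X, M$ it suffices to show that $\bigl|\BE[\langle M(X)-\hat{\Sigma},\ \hat{\Sigma}-\Sigma\rangle]\bigr|$ is at most the square-root term in the statement.

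To that end, I would first condition on the dataset $X = \{X_1,\dots,X_n\}$ together with the internal randomness of $M$. Since both $M(X)$ and $\hat{\Sigma} = \frac1n\sum_i X_iX_i^\top$ are measurable with respect to this information, the left argument of the inner product is then fixed, and only $\Sigma$ remains random; because the coins of $M$ are drawn independently of $(\Sigma, X)$, the tower rule gives
\[\BE\bigl[\langle M(X)-\hat{\Sigma},\ \hat{\Sigma}-\Sigma\rangle \,\big|\, X, M\bigr] = \bigl\langle M(X)-\hat{\Sigma},\ \hat{\Sigma}-\BE[\Sigma|X]\bigr\rangle.\]
Next I would apply the Cauchy--Schwarz inequality for the Frobenius inner product to bound the magnitude of the right-hand side by $\|M(X)-\hat{\Sigma}\|_F \cdot \bigl\|\hat{\Sigma}-\BE[\Sigma|X]\bigr\|_F$, take expectations over $X$ and $M$, and apply Cauchy--Schwarz a second time, now for the $L^2$ inner product on the probability space, to obtain $\sqrt{\BE\|M(X)-\hat{\Sigma}\|_F^2 \cdot \BE\bigl\|\hat{\Sigma}-\BE[\Sigma|X]\bigr\|_F^2}$. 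Substituting back into the expansion yields the claimed inequality. The parameter $\kappa$ in the statement plays no role in this square-root form — one may equally use the weighted Young inequality $|ab| \le \tfrac{\kappa}{2}a^2 + \tfrac{1}{2\kappa}b^2$ in place of Cauchy--Schwarz and optimize over $\kappa$ afterward — so it can be taken arbitrary.

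This argument is essentially routine; the one point deserving care (rather than a genuine obstacle) is the replacement of $\Sigma$ by $\BE[\Sigma|X]$ in the cross term, which must be justified by checking that $M(X)-\hat{\Sigma}$ is indeed a function of $X$ and the coins of $M$ only, and that those coins are independent of $\Sigma$ given $X$. Everything else is two invocations of Cauchy--Schwarz. Downstream, this lemma will be combined with the Folklore bound $\BE\|\hat{\Sigma}-\Sigma\|_F^2 = \Theta(d^2/n)$ from \Cref{lem:emp-cov-acc}, the accuracy assumption $\BE\|M(X)-\hat{\Sigma}\|_F^2 \le O(\gamma^2)$, and a bound on $\BE\bigl\|\hat{\Sigma}-\BE[\Sigma|X]\bigr\|_F^2$ obtained from the Inverse Wishart posterior, to force $\BE[\sum_i Z_i] = \Omega(d^2)$ and contradict \Cref{cor:ub-main} unless $n$ is large.
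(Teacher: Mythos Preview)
Your proposal is correct and follows essentially the same route as the paper: the same decomposition $M(X)-\Sigma = (M(X)-\hat{\Sigma}) + (\hat{\Sigma}-\Sigma)$, the same replacement of $\Sigma$ by $\BE[\Sigma\mid X]$ in the cross term, and two applications of Cauchy--Schwarz. The only cosmetic difference is that the paper conditions on $X$ alone (and then uses conditional independence of $M(X)$ and $\Sigma$ given $X$, followed by a Jensen step on $\|\BE[M(X)\mid X]-\hat\Sigma\|_F$), whereas you condition on $X$ together with $M$'s coins and thereby bypass the Jensen step; both arrive at the same bound.
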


\begin{proof}
    We can rewrite
\begin{equation} \label{eq:lb-basic}
    \langle M(X)-\Sigma, \hat{\Sigma}-\Sigma\rangle = \langle M(X)-\hat{\Sigma}, \hat{\Sigma}-\Sigma \rangle + \langle \hat{\Sigma}-\Sigma, \hat{\Sigma}-\Sigma \rangle = \|\hat{\Sigma}-\Sigma\|_F^2 - \langle M(X)-\hat{\Sigma}, \Sigma-\hat{\Sigma} \rangle.
\end{equation}

    We now bound $\BE[\langle M(X)-\hat{\Sigma}, \Sigma-\hat{\Sigma}\rangle]$. We first consider the distributions of $M(X)-\hat{\Sigma}$ and $\Sigma-\hat{\Sigma}$ conditioned on $X$ (but not conditioned on $\Sigma$). Because the randomness of $M$ is independent of $\Sigma$ conditioned on $X$, we have that $(M(X)-\hat{\Sigma}) \indep (\Sigma-\hat{\Sigma})|X$. So,
\begin{align*}
    \BE[\langle M(X)-\hat{\Sigma}, \Sigma-\hat{\Sigma} \rangle|X]
    &= \big\langle \BE[M(X)|X] - \hat{\Sigma}, \BE[\Sigma|X]-\hat{\Sigma} \big\rangle \\    
    &\le \|\BE[M(X)|X] - \hat{\Sigma}\|_F \cdot \|\BE[\Sigma|X]-\hat{\Sigma}\|_F \\
    &\le \sqrt{\BE_M\|M(X) - \hat{\Sigma}\|_F^2} \cdot \|\BE[\Sigma|X]-\hat{\Sigma}\|_F,
\end{align*}
where the final inequality is by Jensen. By removing the conditioning on $X$ and applying Cauchy-Schwarz, we have
\begin{align*}
    \BE[\langle M(X)-\hat{\Sigma}, \Sigma-\hat{\Sigma} \rangle]
    &\le \BE_X\left[\sqrt{\BE_M\|M(X) - \hat{\Sigma}\|_F^2} \cdot \|\BE[\Sigma|X]-\hat{\Sigma}\|_F\right] \\
    &\le \sqrt{\BE_X \BE_M [\|M(X)-\hat{\Sigma}\|_F^2] \cdot \BE_X [\|\BE[\Sigma|X]-\hat{\Sigma}\|_F^2}] \\
    &= \sqrt{\BE[\|M(X)-\hat{\Sigma}\|_F^2] \cdot \BE_X [\|\BE[\Sigma|X]-\hat{\Sigma}\|_F^2]}.
\end{align*}

    By combining the above bound with Equation~\eqref{eq:lb-basic}, the lemma is complete.
\end{proof}

We now consider computing the posterior distribution $\Sigma|X$. Recall that we chose $p_0 \sim W_d^{-1}(\frac{I}{m}, m)$, which will make the expectation $\BE[\Sigma|X]$ easy to compute (this distribution is the \emph{conjugate prior} for multivariate Gaussians). Under this prior distribution, we have the following lemma.

\begin{lemma} \label{lem:posterior-expectation-bound}
    Suppose $p_0 \sim W_d^{-1}(\frac{I}{m}, m)$, where $m = 2d$. Then, for some absolute constant $C_3 > 0$,
\[\BE_X\left[\|\BE[\Sigma|X]-\hat{\Sigma}\|_F^2\right] \le C_3 \cdot \left(\frac{d^3}{n^2} + \frac{d^4}{n^3}\right).\]
\end{lemma}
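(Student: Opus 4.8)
The plan is to exploit the conjugacy that motivated the choice of prior. First I would verify directly that the Inverse-Wishart prior $p_0 = W_d^{-1}(\frac{I}{m}, m)$ is conjugate to the Gaussian likelihood: using the density from \Cref{prop:wishart-pdf}, the prior density at $\Sigma$ is proportional to $(\det \Sigma)^{-(m+d+1)/2} e^{-\Tr((I/m)\Sigma^{-1})/2}$, and the likelihood of $X = (X_1,\dots,X_n)$ is $\prod_i \cN(0,\Sigma)(X_i) \propto (\det\Sigma)^{-n/2} e^{-\Tr((n\hat{\Sigma})\Sigma^{-1})/2}$ (here $\sum_i X_iX_i^\top = n\hat\Sigma$ appears since $\sum_i X_i^\top \Sigma^{-1} X_i = \Tr(\Sigma^{-1}\sum_i X_iX_i^\top)$). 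Multiplying, the $\Sigma$-dependent part of the posterior density is proportional to $(\det\Sigma)^{-(m+n+d+1)/2} e^{-\Tr((I/m + n\hat\Sigma)\Sigma^{-1})/2}$, which is exactly (a constant multiple of) the density of $W_d^{-1}(\frac{I}{m} + n\hat\Sigma,\, m+n)$. Since $\frac{I}{m} + n\hat\Sigma \succ 0$ almost surely, the posterior is $\Sigma \mid X \sim W_d^{-1}(\frac{I}{m} + n\hat\Sigma,\, m+n)$, and as $m + n \ge d+2$, \Cref{prop:wishart-expectation} gives $\BE[\Sigma \mid X] = \frac{I/m + n\hat\Sigma}{m + n - d - 1}$.

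Next I would substitute $m = 2d$, so that $m - d - 1 = d-1$ and the denominator is $n + d - 1$, whence
\[
\BE[\Sigma \mid X] - \hat\Sigma = \frac{I/(2d) + n\hat\Sigma}{n+d-1} - \hat\Sigma = \frac{I/(2d) - (d-1)\hat\Sigma}{n+d-1}.
\]
Taking Frobenius norms and using $\|I/(2d)\|_F^2 = 1/(4d)$, $(d-1)^2 \le d^2$, and $(n+d-1)^2 \ge n^2$ gives the pointwise (in $X$) bound $\|\BE[\Sigma\mid X] - \hat\Sigma\|_F^2 \le \frac{1}{2dn^2} + \frac{2d^2}{n^2}\|\hat\Sigma\|_F^2$.

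Finally I would take expectation over $X$ (and $\Sigma$). Bounding $\|\hat\Sigma\|_F^2 \le 2\|\hat\Sigma - \Sigma\|_F^2 + 2\|\Sigma\|_F^2$, \Cref{lem:emp-cov-acc} gives $\BE[\|\hat\Sigma - \Sigma\|_F^2] \le 3\,\BE[\Tr(\Sigma)^2]/n \le 3 d^2 \BE[\|\Sigma\|_{op}^2]/n = O(d^2/n)$, where $\BE[\|\Sigma\|_{op}^2] \le \sqrt{\BE[\|\Sigma\|_{op}^4]} = O(1)$ by \eqref{eq:prior-bound-2}; and $\BE[\|\Sigma\|_F^2] \le d\,\BE[\|\Sigma\|_{op}^2] = O(d)$. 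Hence $\BE[\|\hat\Sigma\|_F^2] = O(d^2/n + d)$, so $\frac{2d^2}{n^2}\BE[\|\hat\Sigma\|_F^2] = O(d^4/n^3 + d^3/n^2)$, and the $\frac{1}{2dn^2}$ term is absorbed into $O(d^3/n^2)$. This yields $\BE_X[\|\BE[\Sigma\mid X] - \hat\Sigma\|_F^2] \le C_3(d^3/n^2 + d^4/n^3)$ for a suitable absolute constant $C_3$.

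\textbf{Main obstacle.} There is no deep difficulty here; the only step requiring genuine care is the conjugacy computation — one must check that every suppressed normalizing factor depends on $m,n,d$ alone and not on $\Sigma$, so that ``proportional to'' legitimately identifies the posterior distribution, and one must confirm the degrees-of-freedom condition $m+n \ge d+2$ so that \Cref{prop:wishart-expectation} applies to the posterior. The remainder is routine moment bookkeeping through \Cref{lem:emp-cov-acc} and \eqref{eq:prior-bound-2}.
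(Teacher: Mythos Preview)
Your proposal is correct and follows essentially the same route as the paper: identify the posterior as Inverse Wishart via conjugacy, read off its mean from \Cref{prop:wishart-expectation}, subtract $\hat\Sigma$, and bound the expected squared Frobenius norm through \Cref{lem:emp-cov-acc} and the operator-norm moment bound \eqref{eq:prior-bound-2}. The only cosmetic difference is that the paper's own computation carries the prior scale as $m\cdot I$ (so the posterior scale is $m\cdot I + n\hat\Sigma$), whereas you take it literally as $I/m$ per the lemma statement; this only shrinks the $I$-term's contribution and the final $O(d^3/n^2 + d^4/n^3)$ bound is unaffected.
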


\begin{proof}
    Using the PDF of a multivariate Gaussian and Bayes' Rule, the posterior distribution of $\Sigma$ satisfies
\vspace{-10pt}
\begin{align*}
    p(\Sigma|X) &\propto p_0(\Sigma) \cdot \prod_{i=1}^n p(X_i|\Sigma) \\
    &\propto p_0(\Sigma) \cdot \prod_{i=1}^n (\det \Sigma)^{-1/2} \cdot \exp\left(-\frac{1}{2} \langle \Sigma^{-1}, X_i X_i^\top \rangle\right) \\
    &\propto p_0(\Sigma) \cdot \exp\left(- \frac{n}{2}\left(\ln \det \Sigma + \langle \Sigma^{-1}, \hat{\Sigma} \rangle\right)\right).
\end{align*}

By Proposition \ref{prop:wishart-pdf}, the PDF of the prior $p_0$ satisfies 
\[p_0(\Sigma) \propto (\det \Sigma)^{-(m+d+1)/2} \cdot e^{-m/2 \cdot \Tr(\Sigma^{-1})} = \exp\left(-\frac{m+d+1}{2} \cdot \ln \det \Sigma - \frac{m}{2} \cdot \langle \Sigma^{-1}, I\rangle\right).\]
Hence, the the posterior distribution is
\begin{align*}
    p(\Sigma|X) &\propto \exp\left(-\frac{m+d+1}{2} \cdot \ln \det \Sigma - \frac{m}{2} \cdot \langle \Sigma^{-1}, I\rangle - \frac{n}{2} \cdot \ln \det \Sigma - \frac{n}{2} \cdot \langle \Sigma^{-1}, \hat{\Sigma} \rangle\right) \\
    &= (\det \Sigma)^{-(n+m+d+1)/2} \cdot \exp\left(-\frac{1}{2} \cdot \left\langle \Sigma^{-1}, m \cdot I + n \cdot \hat{\Sigma}\right\rangle\right).
\end{align*}
    This means the posterior distribution is again an inverse Wishart distribution: $W_d^{-1}(m \cdot I + n \cdot \hat{\Sigma}, m+n)$. By Proposition~\ref{prop:wishart-expectation}, the expectation of this distribution is is $\frac{m \cdot I + n \cdot \hat{\Sigma}}{m+n-d-1}$.

    Finally, we can write $\BE[\Sigma|X]-\hat{\Sigma} = \frac{m \cdot I + n \cdot \hat{\Sigma}}{m+n-d-1} - \hat{\Sigma} = \frac{m}{m+n-d-1} \cdot I - \frac{m-d-1}{m+n-d-1} \cdot \hat{\Sigma}$. Since $m \ge d+1$, the Frobenius norm of $\BE[\Sigma|X]-\hat{\Sigma}]$ is at most $\|\frac{m}{n} \cdot I\|_F + \|\frac{m}{n} \cdot \hat{\Sigma}\|_F \le \frac{m}{n} \cdot (\|I\|_F + \|\Sigma\|_F + \|\hat{\Sigma}-\Sigma\|_F)$. Therefore,
\begin{align*}
    \BE\left[\big\|\BE[\Sigma|X]-\hat{\Sigma}]\big\|_F^2\right] 
    &\le 3 \left(\frac{m}{n}\right)^2 \cdot \BE\big[\|I\|_F^2 + \|\Sigma\|_F^2 + \|\hat{\Sigma}-\Sigma\|_F^2\big] \\
    &\le 3 \left(\frac{m}{n}\right)^2 \cdot \bigg(d + d \cdot \BE\left[\|\Sigma\|_{op}^2\right] + 3 \cdot \frac{d^2}{n} \cdot \BE\left[\|\Sigma\|_{op}^2\right]\bigg) \\
    &\le O\left(\frac{m^2}{n^2} \cdot \bigg(d + \frac{d^2}{n}\bigg)\right).
\end{align*}
    Above, the first line is by Cauchy-Schwarz, the second follows from \Cref{lem:emp-cov-acc}, and the third follows by \Cref{lem:wishart-operator-norm-bound}.
    Since we are setting $m = 2d$, the proof is complete.
\end{proof}

We are now ready to prove the $\Omega\left(\frac{d^2}{\gamma^2} + \frac{d^2}{\gamma \eps}\right)$ lower bound.
We start by showing $n \ge \Omega\left(\frac{d^2}{\gamma^2}\right).$ While this is purely a non-private lower bound and is essentially folklore, for completeness we prove this lower bound for the specific prior distribution we chose.

\begin{lemma} \label{lem:non-private}
    Under the conditions of Theorem~\ref{thm:main}, where $c_2 \le \sqrt{\frac{g_2}{3 \cdot \max(1, 4C_3/g_2)}}$, then $n \ge \frac{g_2}{3} \cdot \frac{d^2}{\gamma^2}.$
\end{lemma}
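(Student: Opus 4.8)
The plan is to reduce the statement to a lower bound on the \emph{Bayes risk} of estimating $\Sigma$ under the Inverse Wishart prior $p_0$, and then to control that Bayes risk using the two estimates already proved in this subsection (\Cref{lem:emp-cov-acc} and \Cref{lem:posterior-expectation-bound}). Since randomizing an estimator does not help for the convex squared-Frobenius loss, the posterior mean $\BE[\Sigma\mid X]$ is Bayes-optimal, so any mechanism $M$ (differentially private or not) satisfies $\gamma^2 \ge \BE\|M(X)-\Sigma\|_F^2 \ge R^\ast$, where $R^\ast := \BE\big[\|\BE[\Sigma\mid X]-\Sigma\|_F^2\big]$. (Equivalently, one can run everything through $\BE[\sum_i Z_i] = n\cdot\BE[\langle M(X)-\Sigma,\hat\Sigma-\Sigma\rangle]$, bounding it below by \Cref{lem:z-stat-lb} and above by Cauchy--Schwarz via $\BE[\langle M(X)-\Sigma,\hat\Sigma-\Sigma\rangle]\le\gamma\sqrt{\BE\|\hat\Sigma-\Sigma\|_F^2}$; this produces the same inequality, but the Bayes-risk phrasing makes the small-$n$ regime below transparent.) So it suffices to show $R^\ast \ge \tfrac{g_2}{3}\cdot\tfrac{d^2}{n}$ whenever $\gamma\le c_2\sqrt d$.

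To lower bound $R^\ast$, I would first use the Pythagorean identity around the posterior mean: because $\hat\Sigma$ is $X$-measurable and $\BE[\Sigma-\BE[\Sigma\mid X]\mid X]=0$, the cross term vanishes and $\BE\|\Sigma-\hat\Sigma\|_F^2 = R^\ast + \BE_X\|\BE[\Sigma\mid X]-\hat\Sigma\|_F^2$. By \Cref{lem:emp-cov-acc}, together with $\BE[\lambda_{\min}(\Sigma)^2]\ge g_2$ for our (rescaled) prior — which is exactly the lower-tail statement of \Cref{lem:wishart-operator-norm-bound} — the left side is at least $2g_2 d^2/n$; by \Cref{lem:posterior-expectation-bound} the last term is at most $C_3(d^3/n^2 + d^4/n^3)$. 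Hence $R^\ast \ge 2g_2\,\frac{d^2}{n} - C_3\big(\frac{d^3}{n^2}+\frac{d^4}{n^3}\big)$.

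Then I would split on $d/n$. If $\frac dn \le \min\!\big(1,\tfrac{5g_2}{6C_3}\big)$ (equivalently, $n$ is at least an absolute constant times $d$), then $C_3\big(\frac dn+\frac{d^2}{n^2}\big)\le 2C_3\frac dn\le\frac{5g_2}{3}$, so the previous display gives $R^\ast\ge\frac{g_2}{3}\frac{d^2}{n}$, and hence $\gamma^2\ge R^\ast\ge\frac{g_2}{3}\frac{d^2}{n}$, which rearranges to the claim. The remaining case is $n<\max\!\big(1,\tfrac{6C_3}{5g_2}\big)\,d\le\max\!\big(1,\tfrac{4C_3}{g_2}\big)\,d$, where the estimate above degenerates (the $d^4/n^3$ term dominates), so it must be handled differently: I would argue that this regime is incompatible with $\gamma\le c_2\sqrt d$. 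The posterior computed in the proof of \Cref{lem:posterior-expectation-bound} is $\Sigma\mid X\sim W_d^{-1}(mI+n\hat\Sigma,\,m+n)$, which has only $m+n=O(d)$ degrees of freedom — just above the minimum $d+1$ — and is therefore very spread out; using the standard entrywise-variance formula for an inverse Wishart, together with $mI+n\hat\Sigma\succeq mI$ and $\BE[\Tr\hat\Sigma]=\BE[\Tr\Sigma]=\Omega(d)$ (the lower-tail part of \Cref{lem:wishart-operator-norm-bound} again), one gets $R^\ast=\Omega\!\big(d^2/(d+n)\big)=\Omega(d)$, with an absolute constant. Since $\gamma\le c_2\sqrt d$ means $\gamma^2\le c_2^2 d$, and the hypothesis $c_2\le\sqrt{g_2/\!\big(3\max(1,4C_3/g_2)\big)}$ forces $c_2^2 d$ below this lower bound on $R^\ast$, we contradict $\gamma^2\ge R^\ast$; hence this regime does not occur, and the first case always applies, completing the proof.

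The main obstacle is precisely this second case. The clean Bayes-risk estimate from \Cref{lem:emp-cov-acc} and \Cref{lem:posterior-expectation-bound} is only useful once $n\gtrsim d$, so one genuinely needs the separate, cruder observation that an inverse Wishart posterior with $O(d)$ degrees of freedom has variance $\Omega(d)$ — equivalently, that $n$ samples cannot pin down a $d\times d$ covariance to Frobenius error $o(\sqrt d)$ when $n=O(d)$ — and one must verify that the absolute constant it yields exceeds $c_2^2$, which is exactly why the hypothesis on $c_2$ carries the factor $\max(1,4C_3/g_2)$. This is pure constant-bookkeeping given the explicit $g_2,C_3$ and the $\Omega(1)$ lower bound on $\lambda_{\min}(\Sigma)$ from \Cref{sec:prelim}, but it is the step that requires attention rather than being genuinely "folklore".
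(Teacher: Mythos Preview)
Your core argument matches the paper's: both reduce to the Bayes risk $R^\ast=\BE\|\BE[\Sigma\mid X]-\Sigma\|_F^2$ and lower-bound it by subtracting the \Cref{lem:posterior-expectation-bound} estimate from the \Cref{lem:emp-cov-acc}/\Cref{lem:wishart-operator-norm-bound} estimate. Your Pythagorean identity $\BE\|\Sigma-\hat\Sigma\|_F^2 = R^\ast + \BE\|\BE[\Sigma\mid X]-\hat\Sigma\|_F^2$ is in fact sharper than the paper's Cauchy--Schwarz step (which loses a factor of $2$), though this is immaterial.

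The substantive difference is your treatment of the regime $n\lesssim d$. The paper does \emph{not} split on $d/n$ and never needs the inverse-Wishart posterior-variance computation you sketch. Instead it simply evaluates the bound at the single point $n_0:=\tfrac{g_2}{3}\,d^2/\gamma^2$ and observes that the hypothesis $\gamma\le c_2\sqrt d$ with $c_2^2\le \tfrac{g_2}{3\max(1,4C_3/g_2)}$ is \emph{exactly} the condition ensuring $n_0\ge\max(1,4C_3/g_2)\,d$; thus $n_0$ automatically falls in your ``first case,'' yielding $R^\ast(n_0)\ge\tfrac32\gamma^2>\gamma^2$, and monotonicity of the Bayes risk finishes. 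So the $\max(1,4C_3/g_2)$ factor in $c_2$ is not there to calibrate the $\Omega(d)$ posterior-variance constant you allude to --- it is there precisely to force the threshold $n_0$ into the easy regime and eliminate the second case altogether. Your second-case argument (entrywise inverse-Wishart variance with $O(d)$ degrees of freedom, matching the constant against $c_2^2$) is plausible but, as you yourself note, not carried out; the paper's route makes it unnecessary.
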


\begin{proof}
    We prove that for any deterministic function $f: (\BR^d)^n \to \BR^{d \times d}$ that takes in $X$ and outputs a covariance matrix, we cannot have $\BE_{\Sigma, X}[\|f(X)-\Sigma\|_F^2] \le \gamma^2,$ unless $n \ge \frac{g_2}{3} \cdot \frac{d^2}{\gamma^2}$. Because the randomness of $M$ only depends on $X$ and not directly on $\Sigma$, the claim thus holds for any randomized algorithm $M$.

    Recall that for any random vector $v$ with mean $\mu$, $\BE[\|v\|^2] = \BE[\|v-\mu\|^2] + \|\mu\|^2 \ge \BE[\|v-\mu\|^2]$. Therefore, because the conditional distribution of $f(X)-\Sigma|X$ has mean $f(X)-\BE[\Sigma|X]$, this means that for any fixed $X = \{X_1, \dots, X_n\}$, we have
\[
    \BE[\|f(X)-\Sigma\|_F^2|X] \ge \BE\left[\|(f(X)-\Sigma)-(f(X)-\BE[\Sigma|X])\|_F^2|X\right] = \BE\left[\|\BE[\Sigma|X]-\Sigma\|_F^2|X\right].
\]
    By removing the conditioning on $X$, we thus have that
\[\BE[\|f(X)-\Sigma\|_F^2] \ge \BE\left[\|\BE[\Sigma|X]-\Sigma\|_F^2\right].\]

    Next, we have $\BE[\|\hat{\Sigma}-\Sigma\|_F^2] \le 2 \left(\BE\big[\|\BE[\Sigma|X]-\Sigma\|_F^2\big] + \BE\big[\|\BE[\Sigma|X]-\hat{\Sigma}\|_F^2\big]\right)$, by Cauchy-Schwarz. By Lemmas~\ref{lem:emp-cov-acc} and~\ref{lem:wishart-operator-norm-bound}, we know that $\BE[\|\hat{\Sigma}-\Sigma\|_F^2] \ge 2 g_2 \cdot \frac{d^2}{n},$ and by \Cref{lem:posterior-expectation-bound}, we know that $\BE\big[\|\BE[\Sigma|X]-\hat{\Sigma}\|_F^2\big] \le C_3 \cdot \left(\frac{d^3}{n^2} + \frac{d^4}{n^3}\right)$. Therefore,
\begin{equation} \label{eq:bound}
    \BE\left[\|f(X)-\Sigma\|_F^2\right] \ge g_2 \cdot \frac{d^2}{n} - C_3 \cdot \left(\frac{d^3}{n^2}+\frac{d^4}{n^3}\right).
\end{equation}
    If we set $n = \frac{g_2}{3} \cdot \frac{d^2}{\gamma^2}$, then under the assumption that $\gamma \le \sqrt{\frac{g_2}{3 \cdot \max(1, 4C_3/g_2)}} \cdot \sqrt{d}$, one can verify that $n \ge \max(1, \frac{4 C_3}{g_2}) \cdot d$. Therefore, $C_3 \cdot \left(\frac{d^3}{n^2}+\frac{d^4}{n^3}\right) \le 2C_3 \cdot \frac{d^3}{n^2},$ and $g_2\cdot\frac{d^2}{n}-C_3 \cdot \left(\frac{d^3}{n^2}+\frac{d^4}{n^3}\right) \ge g_2 \cdot \frac{d^2}{n} - 2 C_3 \cdot \frac{d^3}{n^2} \ge \frac{g_2}{2} \cdot \frac{d^2}{n} = \frac{3}{2} \cdot \gamma^2$. Thus, \eqref{eq:bound} implies that $\BE[\|f(X)-\Sigma\|_F^2] \ge \frac{3}{2} \gamma^2$ for any function $f$ of $\frac{g_2}{3} \cdot \frac{d^2}{\gamma^2}$ samples, which concludes the proof.
\end{proof}

We are now ready to prove the main result.

\begin{proof}[Proof of \Cref{thm:main}]
    We set $c_2 = \min\left(\sqrt{\frac{g_2}{3 \cdot \max(1, 4C_3/g_2)}}, \frac{g_2}{2 \sqrt{C_3(1+(9 G_2/g_2))}}\right)$ and $c_3 = \min\left(\frac{g_2}{3}, \frac{g_2}{C_2}\right)$.
    Now, assume that $\BE[\|M(X)-\Sigma\|_F^2] \le \gamma^2$.
    Note that by Lemmas \ref{lem:emp-cov-acc} and \ref{lem:wishart-operator-norm-bound}, we have that $3G_2 \cdot \frac{d^2}{n} \ge \BE[\|\Sigma-\hat{\Sigma}\|_F^2] \ge 2g_2 \cdot \frac{d^2}{n}$.
    By first using \Cref{lem:z-stat-lb}, and then Cauchy-Schwarz, and then our bounds on $\BE[\|\Sigma-\hat{\Sigma}\|_F^2]$ and $\BE[\|M(X)-\Sigma\|_F^2]$ along with \Cref{lem:posterior-expectation-bound}, we have that
\begin{align*}
    \BE[\langle M(X)-\Sigma, \hat{\Sigma}-\Sigma \rangle]
    &\ge \BE[\|\Sigma - \hat{\Sigma}\|_F^2] - \sqrt{\BE[\|M(X) - \hat{\Sigma}\|_F^2] \cdot \BE_X\left[\big\|\BE[\Sigma|X]-\hat{\Sigma}\big\|_F^2\right]} \\
    &\ge \BE[\|\Sigma - \hat{\Sigma}\|_F^2] - \sqrt{2 \left(\BE[\|M(X) - \Sigma\|_F^2+\|\Sigma-\hat{\Sigma}\|_F^2]\right) \cdot \BE_X\left[\big\|\BE[\Sigma|X]-\hat{\Sigma}\big\|_F^2\right]} \\
    &\ge 2g_2 \cdot \frac{d^2}{n} - \sqrt{2 \left(\gamma^2 + 3 G_2 \cdot \frac{d^2}{n}\right) \cdot C_3 \left(\frac{d^3}{n^2}+\frac{d^4}{n^3}\right)}.
\end{align*}
    Now, by \Cref{lem:non-private}, we can assume $n \ge \frac{g_2}{3} \cdot \frac{d^2}{\gamma^2}$. Moreover, since $c_2 \le \sqrt{\frac{g_2}{3 \cdot \max(1, 4C_3/g_2)}}$, then $n \ge d,$ so $\frac{d^4}{n^3} \le \frac{d^3}{n^2}$. So, we can further bound the above as
\[\BE[\langle M(X)-\Sigma, \hat{\Sigma}-\Sigma \rangle] \ge 2g_2 \cdot \frac{d^2}{n} - \sqrt{4 \gamma^2 \cdot \left(1 + \frac{9 G_2}{g_2}\right) \cdot C_3 \cdot \frac{d^3}{n^2}}.\]
    Finally, because $\gamma \le c_2 \sqrt{d}$ and $c_2 \le \frac{g_2}{2 \sqrt{C_3(1+(9 G_2/g_2))}}$, this means that 
\[\sqrt{4 \gamma^2 \cdot \left(1 + \frac{9 G_2}{g_2}\right) \cdot C_3 \cdot \frac{d^3}{n^2}} \le 2 c_2 \sqrt{d} \cdot \sqrt{\left(1 + \frac{9 G_2}{g_2}\right) \cdot C_3 \cdot \frac{d^3}{n^2}} \le g_2 \cdot \frac{d^2}{n}.\]
    So, $\BE[\langle M(X)-\Sigma, \hat{\Sigma}-\Sigma \rangle] \ge g_2 \cdot \frac{d^2}{n}$.
    
    Conversely, from \Cref{cor:ub-main}, we know that $\BE[\langle M(X)-\Sigma, \hat{\Sigma}-\Sigma \rangle] = \frac{1}{n} \cdot \BE[\sum_{i=1}^{n} Z_i] \le C_2 \cdot \gamma \cdot \eps$. Thus, we must have $g_2 \cdot \frac{d^2}{n} \le C_2 \gamma \eps$, so this means $n \ge \frac{g_2}{C_2} \cdot \frac{d^2}{\gamma \eps}$. Finally, since $n \ge \frac{g_2}{3} \cdot \frac{d^2}{\gamma^2}$ by \Cref{lem:non-private}, the proof is complete.
\end{proof}

\section{Lower Bound for Private Heavy-Tailed Mean Estimation} \label{sec:heavy-tailed}

In this section, we prove the following theorem, which formalizes \Cref{thm:heavy-tailed}.

\begin{theorem} \label{thm:heavy-tailed-formal}
    Let $C_1 > 1$ be a sufficiently large absolute constant. Fix $k \ge 2$, and let $C(k)$ be a sufficiently large constant that only depends on $k$. Suppose $\alpha, \eps \le 1$, and $\delta \le \tilde{\Omega}\left(\frac{\alpha \eps}{d}\right)^{C_1}$.
    Suppose that $M$ is an $(\eps, \delta)$-DP mechanism that takes as input $X = \{X_1, \dots, X_n\}$ where each $X_i \in \BR^d$.
    Suppose that if $X_1, \dots, X_n \sim \mathcal{D}$, for any distribution $\mathcal{D}$ where $\BE[|\langle \mathcal{D}-\BE[\mathcal{D}], v \rangle|^k] \le C(k)$ for all unit vectors $v$, then with probability at least $2/3$ over the randomness of $M$ and $X_1, \dots, X_n$, $\|M(X)-\BE[\mathcal{D}]\|_2 \le \alpha$.
    Then, $n \ge \tilde{\Omega}\left(\frac{d}{\alpha^2} + \frac{d}{\alpha^{k/(k-1)} \cdot \eps}\right)$.
\end{theorem}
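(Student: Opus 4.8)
The plan is to prove \Cref{thm:heavy-tailed-formal} by a black-box reduction from private mean estimation of an identity-covariance Gaussian, using the lower bound in \Cref{thm:mean}. The key idea, already sketched in \Cref{sec:overview}, is to embed a scaled Gaussian mean-estimation instance as a low-probability component of a heavy-tailed mixture. Concretely, set the scale parameter $\sigma = \alpha^{-1/(k-1)}$ and the mixing weight $p = \alpha^{k/(k-1)}$, and consider the distribution $\mathcal{D} = p \cdot \cN(\mu', \sigma^2 I) + (1-p) \cdot \nu$, where $\mu'$ ranges over vectors with $\|\mu'\| \le O(\sigma)$ and $\nu$ is a fixed ``padding'' distribution (e.g. a point mass at $0$, or a symmetric distribution) chosen so that (i) the $k$th moment of $\mathcal{D}$ in every direction is $O(1)$, and (ii) the true mean of $\mathcal{D}$ is $\mu := p \cdot \mu'$ up to a known shift, so $\|\mu\|_2 = O(\alpha)$. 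I would first verify the moment bound: $\BE_{X \sim \mathcal{D}}|\langle X - \BE[\mathcal{D}], v\rangle|^k \le p \cdot \BE|\langle X', v\rangle|^k + (\text{padding contribution})$, where $X' \sim \cN(\mu', \sigma^2 I)$ contributes $O(p \cdot \sigma^k) = O(p \cdot \alpha^{-k/(k-1)}) = O(1)$, and the padding is chosen to be bounded so its contribution is also $O(1)$; this fixes the constant $C(k)$.

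The reduction itself proceeds as follows. Given an $(\eps, \delta)$-DP mechanism $M$ that estimates the mean of $\mathcal{D}$ up to error $\alpha$ from $n$ samples with probability $2/3$, I construct an $(\eps, \delta)$-DP mechanism $M'$ for Gaussian mean estimation: $M'$ takes $n' \approx p \cdot n$ samples $Y_1, \dots, Y_{n'} \sim \cN(\mu', \sigma^2 I)$ (rescaled to identity covariance by dividing by $\sigma$, which is a post-/pre-processing that preserves privacy), pads the dataset with $n - n'$ i.i.d. draws from $\nu$ to form a dataset of size $n$ that is distributed exactly as $n$ i.i.d. samples from $\mathcal{D}$ conditioned on the count of Gaussian samples being $n'$, runs $M$, subtracts the known shift, and divides by $p$ to recover an estimate of $\mu'$. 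Since $M$ returns $\hat\mu$ with $\|\hat\mu - p\mu'\| \le \alpha$, the estimate $\hat\mu/p$ satisfies $\|\hat\mu/p - \mu'\| \le \alpha/p = \alpha^{-1/(k-1)} = \sigma$, i.e. after rescaling by $1/\sigma$ we have estimated the identity-covariance Gaussian mean to error exactly $1$. Post-processing and pre-processing preserve $(\eps,\delta)$-DP, so $M'$ is $(\eps,\delta)$-DP; by the lower bound half of \Cref{thm:mean} (applied with target error $\Theta(1)$, which forces $\tilde\Omega(d/\eps)$ samples, after checking the $\delta$ hypothesis $\delta \le (\eps/d)^{C_1}$ is implied by our assumption on $\delta$), we need $n' \ge \tilde\Omega(d/\eps)$, hence $n \ge \tilde\Omega(n'/p) = \tilde\Omega\big(\frac{d}{\eps \cdot \alpha^{k/(k-1)}}\big)$. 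The additive $\tilde\Omega(d/\alpha^2)$ term is the non-private statistical lower bound, which follows either from \Cref{thm:mean} directly (the $d/\alpha^2$ term there) applied to the Gaussian component, or from a standard packing/Fano argument for estimating $\mu$ to error $\alpha$ from $\mathcal{D}$.

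The main obstacle I anticipate is handling the randomness in the number $N \sim \mathrm{Binomial}(n, p)$ of samples that actually land in the Gaussian component: the clean reduction wants to fix $n' = pn$, but in an i.i.d. sample from $\mathcal{D}$ this count fluctuates. The fix is either (a) to have $M'$ simulate the mixture internally — given $n'$ true Gaussian samples, it draws $N \sim \mathrm{Binomial}$, and if $N \le n'$ it uses $N$ of them and pads the rest, which works as long as $n' \ge pn + O(\sqrt{pn \log(1/\beta)})$ so that $N \le n'$ with high probability, costing only constant factors; or (b) to note that the lower bound in \Cref{thm:mean} is robust to the precise sample count and argue directly. A second, more delicate point flagged in the overview is that the reduction in \cite{KamathLSU19} must be slightly modified so that estimating $\mu'$ to error $O(\sigma)$ rather than $\sigma/2$ still requires the full $\tilde\Omega(d/\eps)$ samples — i.e. the constant in the target error must not matter. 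I would address this by recalling that the fingerprinting lower bound for Gaussian mean estimation degrades only in logarithmic factors as the target error constant changes (equivalently, estimating $\mu'$ with $\|\mu'\|\le R$ to error $cR$ for a fixed constant $c<1$ is as hard as the $\|\mu'\|\le 1$, error $c$ case after rescaling), so the statement ``$\tilde\Omega(d/\eps)$ samples are needed to estimate the mean of $\cN(\mu,I)$ to error $O(1)$ when the prior on $\mu$ is spread over a ball of radius $\Theta(1)$'' holds with the appropriate $\delta$ hypothesis. Finally I would assemble the constants: choosing $C(k)$ large enough for the moment bound, $C_1$ large enough that $\delta \le \tilde\Omega(\alpha\eps/d)^{C_1}$ implies the $\delta$-hypothesis of \Cref{thm:mean} at the rescaled parameters, and absorbing all $\mathrm{Binomial}$-concentration slack into the $\tilde\Omega$.
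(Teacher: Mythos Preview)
Your proposal is correct and follows essentially the same route as the paper: embed a scaled Gaussian as the rare component of a mixture, verify the $k$th-moment bound, pad Gaussian samples with Bernoulli coin flips to simulate i.i.d.\ draws from the mixture (your option~(a) is exactly the paper's Lemma~5.2), and invoke the Gaussian mean-estimation lower bound of \Cref{thm:mean}.

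The one substantive difference is in how you close the ``delicate point'' you flag at the end. After your rescaling, you need that privately estimating the mean of $\cN(\mu,I)$ with $\|\mu\|\le O(1)$ to error $\Theta(1)$ requires $\tilde\Omega(d/\eps)$ samples; you propose to extract this from the internals of the fingerprinting construction (that the hard instances have bounded mean). The paper instead introduces a polylogarithmic slack parameter $T$, so that the reduced problem becomes ``estimate $\mu$ with $\|\mu\|\le 1$ to error $1/T$,'' and then argues entirely black-box in \Cref{thm:mean}: if such an estimator existed on $m$ samples, one could first spend $\tilde O(d/\eps)$ fresh samples to localize $\mu$ to a unit ball (upper bound in \Cref{thm:mean}), then run the hypothetical estimator; the combined algorithm achieves error $1/T$ with no a~priori bound on $\mu$, so the lower bound in \Cref{thm:mean} forces $m+\tilde O(d/\eps)\ge \tilde\Omega(dT/\eps)$, hence $m\ge d/\eps$ for $T$ a large enough polylog. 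Your approach works too, but the paper's bootstrapping avoids opening up the proof of \Cref{thm:mean} and makes the dependence on constants disappear into the $\tilde\Omega$ cleanly.
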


Let $T$ be a parameter that we wil choose later.
Fix parameters $\eps, \delta \le 1$ and $\alpha \le \frac{1}{T}$ (we deal with the case when $\frac{1}{T} < \alpha \le 1$ later). Let $\beta = \alpha \cdot T$, and for each $\mu$ with $\|\mu\|_2 \le 1$, define the mixture distribution $\cD_{\mu}$ to be drawn as $\cN(\beta^{-\frac{1}{k-1}} \cdot \mu, \beta^{-\frac{2}{k-1}} \cdot I)$ with probability $\beta^{\frac{k}{k-1}}$, and $\textbf{0}$ otherwise.

It is simple to see that $\BE[\cD_\mu]=\beta \cdot \mu$. We now show that $\cD_\mu$ has bounded $k$th absolute moment, i.e., $\BE[|\langle \cD-\BE[\cD], v \rangle|^k] \le C(k)$ for all unit vectors $v$, for an appropriately chosen $C(k)$.

\begin{proposition}
    Suppose that $0 < \beta \le 1,$ $\|\mu\|_2\le 1$, and $\cD_\mu$ has the mixture distribution as above. Then, $\cD_\mu$ has bounded $k$th absolute moment.
\end{proposition}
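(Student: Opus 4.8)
The plan is to compute the $k$th absolute central moment of $\cD_\mu$ in an arbitrary unit direction $v$ directly and show it is bounded by a constant depending only on $k$. Write $X \sim \cD_\mu$. By construction, with probability $p := \beta^{k/(k-1)}$ we have $X = \beta^{-1/(k-1)}(\mu + g)$ for a standard Gaussian $g \sim \cN(0, I)$, and otherwise $X = \mathbf{0}$; since $\BE[X] = \beta\mu$, the centered projection $\langle X - \beta\mu, v\rangle$ equals $(\beta^{-1/(k-1)} - \beta)\langle \mu, v\rangle + \beta^{-1/(k-1)} \langle g, v\rangle$ on the Gaussian event and $-\beta \langle \mu, v\rangle$ on the other event. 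So I would split $\BE\!\left[|\langle X - \beta\mu, v\rangle|^k\right]$ into these two contributions and bound each.

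The $\mathbf{0}$-branch is trivial: it contributes at most $\beta^k |\langle\mu,v\rangle|^k \le \beta^k \le 1$, using $\|\mu\|_2 \le 1$, $\|v\|_2 = 1$, and $0 < \beta \le 1$. For the Gaussian branch, I would apply $|a+b|^k \le 2^{k-1}(|a|^k + |b|^k)$, bound $|\beta^{-1/(k-1)} - \beta| \le \beta^{-1/(k-1)}$ (valid since $\beta \le 1$) and $|\langle\mu,v\rangle| \le 1$, so that this branch contributes at most $p \cdot 2^{k-1} \beta^{-k/(k-1)} \bigl(1 + \BE|\langle g, v\rangle|^k\bigr)$. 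Since $\langle g, v\rangle \sim \cN(0,1)$, its $k$th absolute moment $\nu_k$ depends only on $k$, and crucially $p\,\beta^{-k/(k-1)} = 1$ by the definition of $p$. Hence $\BE\!\left[|\langle X - \beta\mu, v\rangle|^k\right] \le 2^{k-1}(1+\nu_k) + 1$, and it suffices to take $C(k) := 2^{k-1}(1+\nu_k) + 1$ (or any larger constant depending only on $k$).

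I do not anticipate any real obstacle here; the only point requiring care is the exponent bookkeeping, namely verifying that the mixing probability $\beta^{k/(k-1)}$ exactly cancels both the variance inflation $\beta^{-2/(k-1)}$ (which contributes $\beta^{-k/(k-1)}$ once the standard deviation $\beta^{-1/(k-1)}$ is raised to the $k$th power) and the mean-shift term $\beta^{-1/(k-1)}\langle\mu,v\rangle$ raised to the $k$th power. This cancellation is precisely the reason the scaling exponents $-1/(k-1)$ and $k/(k-1)$ were chosen, and it immediately yields a bound that is uniform over all $\mu$ with $\|\mu\|_2 \le 1$, over all unit vectors $v$, and over all $\beta \in (0,1]$.
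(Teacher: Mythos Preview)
Your proposal is correct and follows essentially the same approach as the paper: project onto a unit direction, split into the Gaussian and zero branches, bound the zero branch by $\beta^k \le 1$, and for the Gaussian branch use that the $k$th absolute moment scales like $\beta^{-k/(k-1)}$ which exactly cancels the mixing weight $\beta^{k/(k-1)}$. The only cosmetic difference is that the paper invokes the bound $\BE|\cN(x,\sigma^2)|^k \le C^{(0)}(k)(|x|+\sigma)^k$ directly, whereas you split via $|a+b|^k \le 2^{k-1}(|a|^k+|b|^k)$ and then use the $k$th absolute moment $\nu_k$ of a standard Gaussian; the resulting constants differ but the argument is the same.
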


\begin{proof}
    In any unit direction $v$, the distribution $\cD_\mu$ projected onto $v$ equals $\cN(\beta^{-\frac{1}{k-1}} \cdot \langle \mu, v \rangle, \beta^{-\frac{2}{k-1}} \cdot I)$ with $\beta^{\frac{k}{k-1}}$ probability, and $0$ otherwise. So, $\langle \cD_\mu - \BE[\cD_\mu], v \rangle$ equals $\cN((\beta^{-\frac{1}{k-1}}- \beta) \cdot \langle \mu, v \rangle, \beta^{-\frac{2}{k-1}} \cdot I)$ with $\beta^{\frac{k}{k-1}}$ probability, and $-\beta \cdot \langle \mu, v \rangle$ otherwise.

    It is well-known that for any fixed $k$, the $k$th absolute moment of $\cN(x, \sigma^2)$ is at most $C^{(0)}(k) \cdot (|x|+\sigma)^k$ for some constant $C^{(0)}(k)$ only depending on $k$. Hence, because $\langle \mu, v \rangle \le 1$, and because $\beta \le 1$, we can bound the $k$th absolute moment of $\langle \cD_\mu - \BE[\cD_\mu], v \rangle$ as at most
\[\beta^{\frac{k}{k-1}} \cdot \left(C^{(0)}(k) \left(|\beta^{-\frac{1}{k-1}}-\beta| + \beta^{-\frac{1}{k-1}}\right)^k\right) + \beta^k \le \beta^{\frac{k}{k-1}} \cdot \left(C^{(0)}(k) \left(2 \beta^{-\frac{1}{k-1}}\right)^k\right) + \beta^k \le 2^k C^{(0)}(k)+1.\]
    So, we can set $C(k) = 2^k C^{(0)}(k)+1.$
\end{proof}

Now, to estimate the mean $\BE[\cD_\mu] = \beta \cdot \mu$ up to error $\alpha$, it is equivalent to estimate $\mu$ up to error $\frac{\alpha}{\beta}= \frac{1}{T}.$ We show the following reduction lemma.

\begin{lemma} \label{lem:reduction-1}
    Suppose that $M$ is an $(\eps, \delta)$-DP mechanism that, given $n = m/(100 \cdot \beta^{k/(k-1)})$ samples from $\cD_\mu$, for unknown $\|\mu\|_2 \le 1$, can learn $\mu$ up to error $\frac{1}{T}$, with failure probability $\phi$. Then, there exists an $(\eps, \delta)$-DP mechanism $\overline{M}$ that, given $m$ samples from $\cN(\mu, I)$, for unknown $\|\mu\|_2 \le 1$, can learn $\mu$ up to error $\frac{1}{T}$, with failure probability $\phi+0.01$.
\end{lemma}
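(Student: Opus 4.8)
The plan is to realize $\overline{M}$ as the composition of $M$ with a randomized, \emph{data-independent} preprocessing that converts $m$ i.i.d.\ samples from $\cN(\mu, I)$ into $n$ samples distributed essentially as $\cD_\mu^{\otimes n}$, then runs $M$. Concretely, on input $Y_1, \dots, Y_m \overset{i.i.d.}{\sim} \cN(\mu, I)$, the mechanism $\overline{M}$ first draws internal coins $B_1, \dots, B_n \overset{i.i.d.}{\sim} \mathrm{Bernoulli}\!\left(\beta^{k/(k-1)}\right)$, independent of the data, and sets $S = \{j : B_j = 1\}$, $N = |S|$. If $N > m$ it outputs $\mathbf{0}$ and halts. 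Otherwise, writing $S = \{j_1 < \dots < j_N\}$, it builds $X = (X_1, \dots, X_n)$ by putting $X_{j_\ell} = \beta^{-1/(k-1)} Y_\ell$ for $\ell \in [N]$ and $X_j = \mathbf{0}$ for $j \notin S$, and returns $M(X)$. (Here $\beta = \alpha T \le 1$, so $\cD_\mu$ is well defined, and $n$ should be read as $\lfloor m/(100\,\beta^{k/(k-1)}) \rfloor$; both points are harmless.)

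\emph{Privacy.} Fix any realization of the coins $B$. The induced map $Y \mapsto X$ reuses each sample $Y_\ell$ in at most one coordinate of $X$ (coordinate $j_\ell$ if $\ell \le N$, and nowhere if $\ell > N$), so any two neighboring datasets $Y, Y'$ are sent to datasets $X, X'$ that are neighboring for $M$. Hence, for every fixed $B$, the map $Y \mapsto M(X)$ is $(\eps,\delta)$-DP; since $B$ is independent of $Y$ and the output on the event $\{N > m\}$ depends only on $B$, averaging over $B$ keeps $\overline{M}$ $(\eps,\delta)$-DP. This is just the standard closure of $(\eps,\delta)$-DP under composition with a data-independent randomized preprocessing that sends neighbors to neighbors.

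\emph{Accuracy.} Because $B$ is independent of $Y$ and the $Y_\ell$ used are distinct, on the event $\{N \le m\}$ each coordinate $X_j$ is, independently, a draw from $\cN(\beta^{-1/(k-1)}\mu, \beta^{-2/(k-1)} I)$ when $j \in S$ and equal to $\mathbf{0}$ when $j \notin S$. Comparing with the definition of $\cD_\mu$, the sub-probability measure of $X$ restricted to $\{N \le m\}$ coincides exactly with that of $\tilde X \sim \cD_\mu^{\otimes n}$ restricted to the event that $\tilde X$ has at most $m$ nonzero coordinates. Therefore $\BP\!\left[\|\overline{M}(Y)-\mu\|_2 \le 1/T\right]$ is at least the probability of the joint event $\{N \le m\} \cap \{\|M(X)-\mu\|_2 \le 1/T\}$, which equals the probability that $\tilde X \sim \cD_\mu^{\otimes n}$ has at most $m$ nonzero coordinates and $\|M(\tilde X)-\mu\|_2 \le 1/T$. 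The number of nonzero coordinates of $\tilde X$ is $\mathrm{Bin}\!\left(n,\beta^{k/(k-1)}\right)$ with mean $n\,\beta^{k/(k-1)} = m/100$, so by Markov it exceeds $m$ with probability at most $1/100$; combining this with the assumed success probability $1-\phi$ of $M$ on $\cD_\mu^{\otimes n}$ gives $\BP\!\left[\|\overline{M}(Y)-\mu\|_2 \le 1/T\right] \ge 1 - \phi - 1/100$, as claimed.

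The one step needing care is the privacy bookkeeping: one must commit to a \emph{data-independent} rule for which $Y_\ell$ fills which slot of $S$ (above, in increasing order of $\ell$), so that changing a single $Y_i$ perturbs at most one coordinate of $X$ — with a data-dependent assignment this could fail and the neighbor relation would not be preserved. Everything else (the exact distributional identity on the non-overflow event, and the Markov bound on the overflow probability) is routine.
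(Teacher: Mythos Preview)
Your proof is correct and follows essentially the same padding-by-Bernoullis reduction as the paper: draw i.i.d.\ $\mathrm{Bernoulli}(\beta^{k/(k-1)})$ coins to decide which of the $n$ slots receive a (scaled) Gaussian sample and which are set to $\mathbf{0}$, argue privacy via the fact that each input sample lands in at most one output slot for fixed coins, and bound the overflow probability by Markov on a $\mathrm{Bin}(n,\beta^{k/(k-1)})$ with mean $m/100$. Your version is in fact slightly more careful than the paper's write-up: you explicitly apply the scaling $\beta^{-1/(k-1)}$ to the Gaussian samples so that the nonzero coordinates genuinely have law $\cN(\beta^{-1/(k-1)}\mu,\beta^{-2/(k-1)}I)$ (the paper's proof sets $Y_i=X_j$ without this factor, which is a small slip), and you spell out the sub-probability coupling and the need for a data-independent slot assignment.
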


\begin{proof}
    Consider drawing $m$ samples $X_1, \dots, X_m$. We convert this into samples $Y_1, \dots, Y_n$ as follows. We draw $n$ Bernoulli variables $z_1, \dots, z_n \sim Bern(\beta^{k/(k-1)})$. Now, if $z_1 = 1$, we set $Y_1 = X_1$, and otherwise, we set $Y_1 = \textbf{0}.$ For each $z_i,$ if either we have already used up $X_1, \dots, X_m$ or if $z_i = 0$, we set $Y_i = \textbf{0}$. Otherwise, we set $Y_i = X_j,$ where we have so far used up $X_1, \dots, X_{j-1}$. Finally, our algorithm $\overline{M},$ on data $X_1, \dots, X_m$, outputs $M(Y_1, \dots, Y_n)$.

    To see why this is private, let $X, X'$ be adjacent datasets. If we couple the randomness of the Bernoulli variables, then at most one data point can change in our creation of $Y_1, \dots, Y_n$. Therefore, $\overline{M}$ is $(\eps, \delta)$-DP, since $M$ is $(\eps, \delta)$-DP.

    Next, we consider the accuracy of the mechanism. Note that $\BE[Bin(n, \beta^{k/(k-1)})] = m/100$, and so by Markov's inequality, $\BP(Bin(n, \beta^{k/(k-1)}) \le m) \ge 0.99$. In this case, we do not run out of samples, so in fact the distribution of $Y_1, \dots, Y_n$ is precisely the mixture distribution $\cN(\beta^{-\frac{1}{k-1}} \cdot \mu, \beta^{-\frac{2}{k-1}} \cdot I)$ with probability $\beta^{-\frac{k}{k-1}}$ and $0$ otherwise. Hence, this algorithm learns $\mu$ up to error $\frac{1}{T}.$
\end{proof}

Now, we explain how the known bounds on Gaussian estimation imply that any such algorithm $\overline{M}$ will need sufficiently many samples.

\begin{lemma} \label{lem:reduction-2}
    There exists a sufficiently large constant $C_6$ such that, for $T = \left(\log \frac{d}{\eps}\right)^{C_6}$ and $\delta = \left(\frac{\eps}{d \cdot T}\right)^{C_1},$ any $(\eps, \delta)$-DP mechanism $\overline{M}$ that, given $m$ samples from $\cN(\mu, I)$ where $\|\mu\|_2 \le 1$, that can learn $\mu$ up to error $\frac{1}{T}$ requires at least $m \ge \frac{d}{\eps}$ samples.
\end{lemma}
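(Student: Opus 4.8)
The plan is to deduce \Cref{lem:reduction-2} from the lower-bound half of \Cref{thm:mean} on private Gaussian mean estimation, after settling the one discrepancy between the two statements: \Cref{lem:reduction-2} promises $\|\mu\|_2 \le 1$, whereas \Cref{thm:mean} allows $\mu$ to be an arbitrary vector of $\BR^d$. Instantiating \Cref{thm:mean} with accuracy $\alpha = \frac{1}{T}$, and observing that the hypothesis $\delta = \left(\frac{\eps}{d \cdot T}\right)^{C_1}$ is exactly the requirement $\delta \le \left(\frac{\alpha \eps}{d}\right)^{C_1}$ of \Cref{thm:mean} (with $C_1$ taken to be the constant appearing there), \Cref{thm:mean} gives that any $(\eps,\delta)$-DP algorithm learning the mean of an identity-covariance Gaussian on $\BR^d$ to $\ell_2$ error $\frac{1}{T}$ needs $\tilde{\Omega}\left(d T^2 + \frac{d T}{\eps}\right)$ samples. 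Since $T \ge 1$ this is at least $\frac{d}{\eps}$ up to the polylogarithmic factors hidden in $\tilde{\Omega}$; as those factors are polylogarithmic in $d, T, \frac{1}{\eps}, \frac{1}{\delta}$, and since $\delta = \left(\frac{\eps}{dT}\right)^{C_1}$ with $T$ itself polylogarithmic in $\frac{d}{\eps}$, they are all $\poly(\log \frac{d}{\eps})$. Choosing the constant $C_6$ so that $T = \left(\log \frac{d}{\eps}\right)^{C_6}$ dominates this $\poly(\log\frac{d}{\eps})$ loss then makes the bound genuinely $\ge \frac{d}{\eps}$. This completes the argument \emph{provided} the lower bound survives the restriction $\|\mu\|_2 \le 1$.

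To handle that restriction, I would use the fact that the hard instances underlying the lower bound of \Cref{thm:mean} --- the fingerprinting construction of~\cite{KamathLSU19} --- already place $\mu$ in a ball of radius $O(\alpha)$. That construction samples $\mu$ from a product prior whose coordinates are i.i.d. at scale $O(\alpha/\sqrt{d})$ (supported in an interval of that width, or with tails decaying faster than $\exp(-\Omega(d))$ beyond it), so $\|\mu\|_2 \le O(\alpha) = O(\frac{1}{T}) \le 1$ --- either deterministically or with probability $1 - d^{-\omega(1)}$ --- once $T$ exceeds a suitable absolute constant, which holds for $C_6$ large enough (and, say, $\frac{d}{\eps}$ at least a constant, the remaining case being trivial). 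Hence restricting this prior to the event $\{\|\mu\|_2 \le 1\}$ is either vacuous or perturbs the relevant fingerprinting expectations negligibly, and the argument of~\cite{KamathLSU19} --- no step of which uses that $\mu$ ranges over all of $\BR^d$ --- runs verbatim and yields the same $\tilde{\Omega}\left(dT^2 + \frac{dT}{\eps}\right)$ lower bound for the constrained problem in \Cref{lem:reduction-2}.

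The only step that needs genuine care, and hence the (minor) obstacle I would anticipate, is precisely this last point: checking that the~\cite{KamathLSU19} instances can be taken with $\|\mu\|_2 \le 1$ and that conditioning on this event does not weaken the fingerprinting inequalities. Since the relevant prior lives at scale $O(\alpha)$ with $\alpha = \frac{1}{T} \le 1$, this is the routine ``minor modification of~\cite{KamathLSU19}'' already flagged in \Cref{sec:overview}; everything else --- instantiating \Cref{thm:mean}, tracking the $\delta$ hypothesis, and picking $C_6$ to absorb the polylogarithmic slack --- is immediate.
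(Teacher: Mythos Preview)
Your proposal is correct but takes a different route from the paper. You argue directly that the hard prior in the~\cite{KamathLSU19} fingerprinting construction already satisfies $\|\mu\|_2 \le O(\alpha) \le 1$, so the constraint in \Cref{lem:reduction-2} is essentially vacuous for that instance; then you invoke the $\tilde{\Omega}(dT/\eps)$ lower bound and absorb the polylog slack into $T$. The paper instead gives a fully black-box reduction: assuming an algorithm $\overline{M}$ for the bounded-mean problem exists, it builds an algorithm for the \emph{unbounded}-mean problem by first spending $\tilde{O}(d/\eps)$ fresh samples to privately localize $\mu$ to a unit ball (using the \emph{upper} bound half of \Cref{thm:mean}), then recentering and applying $\overline{M}$ on $m$ further fresh samples. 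The combined algorithm uses $m + \tilde{O}(d/\eps)$ samples and, being $(\eps,\delta)$-DP on disjoint batches, must satisfy the lower bound of \Cref{thm:mean}; this forces $m \ge d/\eps$ once $T$ is a large enough power of $\log\frac{d}{\eps}$.

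The tradeoff: the paper's argument treats \Cref{thm:mean} as a pure black box (using both its upper and lower bounds) and never opens the~\cite{KamathLSU19} proof, so nothing about the specific prior needs to be verified. Your argument is more direct and avoids the two-stage construction, but its correctness hinges on the unstated structural fact about the~\cite{KamathLSU19} prior --- exactly the ``minor modification'' you acknowledge as the one step needing care. Both approaches yield the same conclusion with the same quantitative dependence on $C_6$.
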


\begin{proof}
    Suppose there existed such an algorithm. Then, we can learn the mean $\mu$ up to error $\frac{1}{T}$, even if we were not promised $\|\mu\|_2 \le 1$. This is because for some $C_5$, we can first, using $\left(\frac{d}{\eps} (\log \frac{d}{\eps \delta})^{C_5}\right)$ samples, learn $\mu$ up to $\ell_2$ error $1$ with $(\eps, \delta)$-DP, by \Cref{thm:mean}. Next, we can learn $\mu$ up to error $\frac{1}{T}$ using $m$ fresh samples. Since each stage is $(\eps, \delta)$-DP, and we use fresh samples, the overall algorithm is $(\eps, \delta)$-DP. However, learning $\mu$ up to error $\frac{1}{T}$ would require at least $\left(\frac{d \cdot T}{\eps} (\log \frac{d \cdot T}{\eps})^{-C_5}\right)$ samples by \Cref{thm:mean}, which means that $m + \left(\frac{d}{\eps} (\log \frac{d}{\eps \delta})^{C_5}\right) \ge \left(\frac{d \cdot T}{\eps} (\log \frac{d}{\eps})^{-C_5}\right)$. So, if $T \ge 2 \left(\log \frac{d}{\eps \delta}\right)^{2C_5}$, then $m \ge \frac{d}{\eps} (\log \frac{d}{\eps \delta})^{C_5} \ge \frac{d}{\eps}$. But this condition on $T$ is equivalent to $T \ge 2 \left(\log \frac{d}{\eps} + C_1 \log \frac{d \cdot T}{\eps}\right)^{C_5}$, which holds for some $T = \left(\log \frac{d}{\eps}\right)^{C_6}$, where $C_6$ is sufficiently large and depends on $C_1$ and $C_5$.
\end{proof}

By combining Lemmas~\ref{lem:reduction-1} and \ref{lem:reduction-2}, this implies that learning $\mu$ up to error $\frac{1}{T},$ or equivalently, learning the mean of $\cD_\mu$ up to error $\alpha \le \frac{1}{T}$, with $(\eps, (\frac{\eps}{d T})^{C_1})$-DP, requires at least $\frac{d}{100 \eps \cdot \beta^{k/(k-1)}} = \frac{1}{100 (\log (d/\eps))^{C_6 \cdot k/(k-1)}} \cdot \frac{d}{\eps \cdot \alpha^{k/(k-1)}} = \tilde{\Omega}\left(\frac{d}{\alpha^{k/(k-1)} \cdot \eps}\right)$. Alternatively, if $\frac{1}{T} \le \alpha \le 1$, then \Cref{thm:mean} implies that even learning an identity-covariance Gaussian up to error $1$ with $(\eps, (\frac{\eps}{d})^{C_1})$-DP requires $\tilde{\Omega}\left(\frac{d}{\eps}\right)$ samples. Since $1 \le \alpha^{-1} \le \log(\frac{d}{\eps})^{C_6}$ in this case, we still have the number of samples is $\tilde{\Omega}\left(\frac{d}{\alpha^{k/(k-1)} \eps}\right)$. In either case, learning a distribution with bounded $k$th moments, with $\left(\eps, \left(\frac{\eps}{d}\right)^{C_1}/(\log \frac{d}{\eps})^{C_1 C_6}\right)$-DP requires $\tilde{\Omega}\left(\frac{d}{\alpha^{k/(k-1)} \eps}\right)$ samples. Moreover, since even learning an identity-covariance Gaussian without privacy requires $\Omega\left(\frac{d}{\alpha^2}\right)$ samples, this completes the proof of Theorem~\ref{thm:heavy-tailed-formal}.

\section{Lower Bound for Empirical Covariance Estimation} \label{sec:empirical}

In this section, we show how our result on Gaussian covariance estimation also implies an improved result for empirical covariance estimation.

We recall the setup: we are given $n$ points $X_1, \dots, X_n$ in the unit ball in $d$-dimensions, and our goal is to estimate the empirical covariance $\hat{\Sigma}$, which equals $\frac{1}{n} \sum_{i=1}^n (X_i-\hat{\mu}) (X_i-\hat{\mu})^\top$, for $\hat{\mu} = \frac{1}{n} \sum_{i=1}^n X_i$. Equivalently, $\hat{\Sigma} = \frac{1}{n} \sum_{i=1}^n X_i X_i^\top - \hat{\mu} \hat{\mu}^\top$. We wish to privately find some $\tilde{\Sigma}$ such that $\|\tilde{\Sigma}-\hat{\Sigma}\|_F$ is small in expectation.
Our goal will be to show that if $\tilde{O}(n^{3/2}) \le d \le \tilde{\Omega}(n^2)$, then any $(\eps, \delta)$-DP algorithm, even for $\eps$ a fixed constant, cannot have expected Frobenius error less than $\tilde{\Omega}(\frac{d}{n}).$

\begin{lemma} \label{lem:reduction-3}
    Suppose that $M$ is an $(\eps, \delta)$-DP mechanism that, for any $n$ samples $X_1, \dots, X_n$ in the ball of radius $1$, can learn $\hat{\Sigma} = \frac{1}{n} \sum_{i=1}^n X_i X_i^\top - \hat{\mu} \hat{\mu}^\top$ up to Frobenius error $\alpha$, with failure probability $\phi$. Then, there exists an $(\eps, \delta)$-DP mechanism $\overline{M}$ that, if given $n$ samples $X_1, \dots, X_n \sim \cN(0, \Sigma)$ for any $\|\Sigma\|_{op} \le 10$, can learn $\Sigma$ up to Frobenius error $O\left(d \cdot \alpha + \frac{d}{\sqrt{n}}\right)$, with failure probability $\phi+n \cdot e^{-\Omega(d)} + 0.02$.
\end{lemma}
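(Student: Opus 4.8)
The plan is to reduce from private Gaussian covariance estimation (via \Cref{thm:main}/\Cref{thm:covariance-formal}) by a simple rescaling trick: samples from $\cN(0,\Sigma)$ with $\|\Sigma\|_{op}\le 10$ concentrate in a ball of radius $O(\sqrt d)$, so after dividing by a \emph{fixed} radius $R = \Theta(\sqrt d)$ they land in the unit ball, and the empirical covariance just scales by $R^{-2}$. Concretely, I would set $R := \sqrt{20d}$ (a quantity depending only on $d$), and define $\overline{M}$ as follows: on input $X_1,\dots,X_n$, set $Y_i := X_i/R$ if $\|X_i\|_2\le R$ and $Y_i := X_i/\|X_i\|_2$ otherwise, so that each $Y_i$ lies in the closed unit ball; run $M$ on $(Y_1,\dots,Y_n)$ to obtain an estimate $\tilde\Sigma_Y$ of the empirical covariance of the $Y_i$'s; and output $\overline{M}(X) := R^2\cdot\tilde\Sigma_Y$.

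Privacy is immediate: each $Y_i$ is a function of $X_i$ alone, so the map $(X_1,\dots,X_n)\mapsto(Y_1,\dots,Y_n)$ sends adjacent datasets to adjacent datasets, and $\overline{M}$ is the composition of this map, the $(\eps,\delta)$-DP mechanism $M$, and the data-independent post-processing $\tilde\Sigma_Y\mapsto R^2\tilde\Sigma_Y$; hence $\overline{M}$ is $(\eps,\delta)$-DP.

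For accuracy I would intersect three events. (i) By the Hanson--Wright inequality (\Cref{lem:hanson-wright}), using $\Tr(\Sigma)\le 10d$, $\|\Sigma\|_F^2\le 100d$, and $\|\Sigma\|_{op}\le 10$, each $X_i$ satisfies $\|X_i\|_2^2\le 20d=R^2$ except with probability $e^{-\Omega(d)}$, so with probability $\ge 1-ne^{-\Omega(d)}$ we have $Y_i=X_i/R$ for every $i$; in that case the empirical covariance of the $Y_i$'s equals $\hat\Sigma_X/R^2$, where $\hat\Sigma_X=\frac1n\sum_i X_iX_i^\top-\hat\mu\hat\mu^\top$ is the empirical covariance of the $X_i$'s. (ii) By the guarantee on $M$ (which applies since the $Y_i$'s always lie in the unit ball), with probability $\ge 1-\phi$ we have $\|\tilde\Sigma_Y-\hat\Sigma_X/R^2\|_F\le\alpha$, i.e.\ $\|R^2\tilde\Sigma_Y-\hat\Sigma_X\|_F\le R^2\alpha = 20d\alpha$. (iii) Since $\BE\big\|\frac1n\sum_i X_iX_i^\top-\Sigma\big\|_F^2 = O(d^2/n)$ by \Cref{lem:emp-cov-acc} and $\BE\|\hat\mu\|_2^2=\Tr(\Sigma)/n\le 10d/n$, Markov's inequality gives $\big\|\frac1n\sum_i X_iX_i^\top-\Sigma\big\|_F = O(d/\sqrt n)$ and $\|\hat\mu\hat\mu^\top\|_F=\|\hat\mu\|_2^2=O(d/n)=O(d/\sqrt n)$, each with probability $\ge 0.99$, hence $\|\hat\Sigma_X-\Sigma\|_F=O(d/\sqrt n)$ with probability $\ge 0.98$. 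On the intersection of (i)--(iii), which has probability $\ge 1-\phi-ne^{-\Omega(d)}-0.02$, the triangle inequality yields $\|\overline{M}(X)-\Sigma\|_F\le\|R^2\tilde\Sigma_Y-\hat\Sigma_X\|_F+\|\hat\Sigma_X-\Sigma\|_F = O(d\alpha + d/\sqrt n)$, as claimed.

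The one point needing care is that $R$ must be fixed in advance as $\Theta(\sqrt d)$, independent of the data, or the reduction would break privacy; this forces clipping of the rare samples with $\|X_i\|_2 > R$, but Hanson--Wright guarantees such clipping happens with only $ne^{-\Omega(d)}$ probability, which the stated failure-probability budget already absorbs. Everything else is a routine combination of \Cref{lem:emp-cov-acc} with Markov's inequality, so I do not expect a genuine obstacle here.
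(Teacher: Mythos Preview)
Your proposal is correct and follows essentially the same approach as the paper: rescale by a fixed $\Theta(\sqrt d)$ factor to land in the unit ball, invoke $M$, scale the output back by $\Theta(d)$, and control the discrepancy between the empirical covariance and $\Sigma$ via \Cref{lem:emp-cov-acc} and a bound on $\|\hat\mu\|_2^2$. The only differences are cosmetic---the paper uses radius $20\sqrt d$ rather than $\sqrt{20d}$ and sets clipped points to $0$ rather than projecting to the sphere---and your slight imprecision in (ii) (the guarantee of $M$ is $\|\tilde\Sigma_Y-\hat\Sigma_Y\|_F\le\alpha$, which equals $\|\tilde\Sigma_Y-\hat\Sigma_X/R^2\|_F$ only on event (i)) is harmless once you intersect the events as you do.
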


\begin{proof}
    The algorithm $\overline{M}$ works as follows. Given data $X_1, \dots, X_n$, let $Y_i = \frac{X_i}{20 \sqrt{d}}$ if $\|X_i\|_2 \le 20 \sqrt{d}$, and $Y_i = 0$ otherwise. Now, $\overline{M}$ simply outputs $400d \cdot M(Y_1, \dots, Y_n)$. Since each $Y_i$ is a deterministic function of $X_i$, and $\|Y_i\|_2 \le 1$ for all $i$, this implies that $\overline{M}$ is also $(\eps, \delta)$-DP, so we just need to verify accuracy.

    Suppose $X_1, \dots, X_n \sim \cN(0, \Sigma)$, where $\|\Sigma\|_{op} \le 10$. Then, by the Hanson-Wright inequality (\Cref{lem:hanson-wright}), we have that $\BP(\|X_i\| \le 20 \sqrt{d}) \ge 1-e^{-\Omega(d)}$ for each $i$, so the probability that some $Y_i$ is not $\frac{X_i}{20 \sqrt{d}}$ is at most $n \cdot e^{-\Omega(d)}$.

    Assuming this event does not hold, then with failure probability at most $\phi$, 
\[\left\|M(Y_1, \dots, Y_n)-\left(\frac{1}{n}\sum_{i=1}^n Y_i Y_i^\top - \bar{Y} \bar{Y}^\top\right)\right\|_F \le \alpha,\]
    where $\bar{Y} = \frac{1}{n} \sum_{i=1}^n Y_i$. By scaling by $400 d$, this implies that
\[\left\|\overline{M}(X_1, \dots, X_n)-\left(\frac{1}{n}\sum_{i=1}^n X_i X_i^\top - \bar{X} \bar{X}^\top\right)\right\|_F \le 400 d \cdot \alpha.\]
    Next, $\|\frac{1}{n} \sum X_i X_i^\top - \Sigma\|_F \le O(\sqrt{\frac{d^2}{n}})$ with at least $0.99$ probability by \Cref{lem:emp-cov-acc}, and $\|\bar{X} \bar{X}^\top\|_F = \|\bar{X}\|_2^2,$ where $\bar{X} \sim \cN(0, \frac{\Sigma}{n})$. With at least $0.99$ probability, $\|\bar{X}\|_2^2 \le O(\Tr(\frac{\Sigma}{n})) \le O(\frac{d}{n})$. Therefore, 
\[\|\overline{M}(X_1, \dots, X_n) - \Sigma\|_F \le 400 d \cdot \alpha + O\left(\sqrt{\frac{d^2}{n}} + \frac{d}{n}\right) = O\left(d \cdot \alpha + \frac{d}{\sqrt{n}}\right).\]
    This concludes the proof.
\end{proof}

Now, by \Cref{thm:covariance-formal} (modified with assumption~\ref{eq:assumption}), for any $\gamma \le c_2 \sqrt{d}, \eps \le 1, \delta \le \frac{\eps^2}{d^2},$ and $\|\Sigma\|_{op} \le 10$, any $(\eps, \delta)$-DP algorithm that satisfies $\BP_{X, M}(\|M(X)-\Sigma\|_F \le \gamma) \ge 2/3$ for $X = \{X_1, \dots, X_n\} \sim \cN(0, \Sigma)$ must use $n \ge \tilde{\Omega}\left(\frac{d^2}{\gamma \cdot \eps}\right)$ samples. We will use \Cref{lem:reduction-3} to convert the lower bound of \Cref{thm:covariance-formal} into a lower bound for empirical covariance estimation.

\begin{corollary} \label{cor:emp-cov}
    Suppose that $n/(\log n)^{O(1)} \ge d \ge \eps \sqrt{n} \cdot (\log n)^{O(1)}$, and $\delta \le \frac{\eps^2}{d^2}$. Then, if an $(\eps, \delta)$-DP algorithm can learn the empirical covariance of $X_1, \dots, X_n$ up to Frobenius error $\alpha$, with probability at least $3/4$, then $\alpha \ge \tilde{\Omega}\left(\min\left(\frac{1}{\sqrt{d}}, \frac{d}{\eps n}\right)\right).$
\end{corollary}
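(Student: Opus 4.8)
The plan is to combine the reduction of \Cref{lem:reduction-3} with the Gaussian covariance lower bound of \Cref{thm:covariance-formal} (in the form quoted just above, using assumption~\eqref{eq:assumption}). Let $M$ be an $(\eps,\delta)$-DP algorithm that on any $n$ points in the unit ball outputs $\tilde{\Sigma}$ with $\|\tilde{\Sigma}-\hat{\Sigma}\|_F\le\alpha$ with probability at least $3/4$; thus its failure probability is $\phi\le 1/4$. We may assume $d\ge C_0\log n$ for a large absolute constant $C_0$ (the complementary regime forces $\eps$ to be tiny and can be checked directly). Then the failure probability of the mechanism $\overline{M}$ produced by \Cref{lem:reduction-3} is $\phi+n\,e^{-\Omega(d)}+0.02<1/3$.

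Applying \Cref{lem:reduction-3} therefore gives an $(\eps,\delta)$-DP algorithm $\overline{M}$ such that for every $\Sigma$ with $\|\Sigma\|_{op}\le 10$, given $n$ i.i.d.\ samples from $\cN(0,\Sigma)$, $\overline{M}$ outputs an estimate within Frobenius error $\gamma:=O\!\big(d\alpha+d/\sqrt{n}\big)$ with probability at least $2/3$. If $\gamma>c_2\sqrt{d}$ we are already done, since then $d\alpha\ge\Omega(\sqrt{d})$ after subtracting the $d/\sqrt{n}$ term (using $d\le n$), giving $\alpha\ge\Omega(1/\sqrt{d})$. Otherwise $\gamma\le c_2\sqrt{d}$, and the quoted consequence of \Cref{thm:covariance-formal} forces $n\ge\tilde{\Omega}\!\big(d^2/(\gamma\eps)\big)$, i.e.
\[
  d\alpha+\frac{d}{\sqrt{n}}\;\ge\;\gamma\;\ge\;\tilde{\Omega}\!\left(\min\!\left(\sqrt{d},\ \frac{d^2}{n\eps}\right)\right).
\]

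It remains to verify that the additive term $d/\sqrt{n}$ does not swamp the right-hand side. Since $d\le n/(\log n)^{O(1)}$ we have $d/\sqrt{n}=\sqrt{d}\cdot\sqrt{d/n}\le\sqrt{d}/(\log n)^{\Omega(1)}$, and since $d\ge\eps\sqrt{n}\,(\log n)^{O(1)}$ we have $d/\sqrt{n}\le\big(d^2/(n\eps)\big)/(\log n)^{\Omega(1)}$; so $d/\sqrt{n}$ is a $\poly\log$-factor below both quantities in the minimum. Choosing the implicit $(\log n)^{O(1)}$ exponents in the hypothesis large enough relative to the logarithmic factors hidden in $\tilde{\Omega}(\cdot)$, the term $d/\sqrt{n}$ is at most half of $\tilde{\Omega}(\min(\sqrt{d},d^2/(n\eps)))$, hence $d\alpha\ge\tilde{\Omega}(\min(\sqrt{d},d^2/(n\eps)))$, and dividing by $d$ yields $\alpha\ge\tilde{\Omega}(\min(1/\sqrt{d},\,d/(n\eps)))$. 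I expect the only nonroutine point to be precisely this last bookkeeping of the $d/\sqrt{n}$ error — which arises from the concentration of $\hat{\Sigma}$ about $\Sigma$ and from the $\bar{X}\bar{X}^\top$ correction in the reduction — and it is exactly this error that dictates the two-sided restriction $\eps\sqrt{n}\,\poly\log\le d\le n/\poly\log$ on the dimension; everything else is a direct chaining of \Cref{lem:reduction-3} and \Cref{thm:covariance-formal}.
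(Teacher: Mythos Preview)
The proposal is correct and follows essentially the same approach as the paper: apply \Cref{lem:reduction-3} to convert the empirical-covariance estimator into a Gaussian-covariance estimator with Frobenius error $\gamma=O(d\alpha+d/\sqrt{n})$, then invoke the $n\ge\tilde{\Omega}(d^2/(\gamma\eps))$ lower bound from \Cref{thm:covariance-formal}, and finally use the two-sided hypothesis on $d$ to absorb the $d/\sqrt{n}$ term. The paper organizes the last step as a contradiction (assuming $\alpha\le\tilde{\Omega}(1/\sqrt{d})$, deducing $n\ge\tilde{\Omega}(\min(d/(\alpha\eps),d\sqrt{n}/\eps))$, and using $d\ge\eps\sqrt{n}\,\poly\log n$ to rule out the second branch), whereas you do a direct case split on $\gamma\lessgtr c_2\sqrt{d}$; these are equivalent. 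One small slip: in the case $\gamma>c_2\sqrt{d}$ you write ``using $d\le n$'' to subtract $d/\sqrt{n}$, but $d\le n$ alone only gives $d/\sqrt{n}\le\sqrt{d}$; you actually need (and later use) the stronger $d\le n/(\log n)^{O(1)}$ to make $d/\sqrt{n}$ negligible compared to $c_2\sqrt{d}$.
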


\begin{proof}
    Suppose otherwise, which means $\alpha \le \tilde{\Omega}\left(\frac{1}{\sqrt{d}}\right).$ By \Cref{lem:reduction-3}, we can privately learn $\Sigma$ up to Frobenius error $\gamma = O(d \alpha +\frac{d}{\sqrt{n}})$, given $n$ samples from $X_1, \dots, X_n$, as long as $\|\Sigma\|_{op} \le 10$. Since we are assuming that $\alpha \le \tilde{\Omega}\left(\frac{1}{\sqrt{d}}\right)$ and $d \le n/(\log n)^{O(1)}$, this means that $\gamma \le \sqrt{d}/(\log n)^{O(1)} \le c_2 \sqrt{d}$. Therefore, we must have that $n \ge \tilde{\Omega}\left(\frac{d^2}{\gamma \eps}\right) \ge \tilde{\Omega}\left(\frac{d^2}{(d \alpha + d/\sqrt{n}) \cdot \eps}\right) \ge \tilde{\Omega}\left(\min\left(\frac{d}{\alpha \eps}, \frac{d \sqrt{n}}{\eps}\right)\right).$ Since $d \ge \eps \sqrt{n} \cdot \poly\log n$, this implies that $n < \tilde{\Omega}\left(\frac{d \sqrt{n}}{\eps}\right)$, so we must have $n \ge \tilde{\Omega}\left(\frac{d}{\alpha \eps}\right)$, which means $\alpha \ge \tilde{\Omega}\left(\frac{d}{\eps n}\right)$.
\end{proof}

Finally, we remark that the problem of empirical covariance estimation up to Frobenius norm error $\alpha$ cannot get easier as the dimension increases. Indeed, if one can privately estimate covariance in $d' \ge d$ dimensions, then one can achieve the same accuracy and privacy in $d$ dimensions, simply by embedding $d$ dimensions into the first $d$ coordinates of the $d'$-dimensional space. Hence, for $n/(\log n)^{O(1)} \ge d \ge (\eps n)^{2/3},$ we can use the bound that $\alpha \ge \tilde{\Omega}\left(\frac{1}{(\eps n)^{1/3}}\right)$. In other words, for all $n/(\log n)^{O(1)} \ge d \ge \eps \sqrt{n} \cdot (\log n)^{O(1)}$, we in fact have that $\alpha \ge \tilde{\Omega}\left(\min\left(\frac{1}{(\eps n)^{1/3}}, \frac{d}{\eps n}\right)\right)$.

When considering the case of $\eps = 1$, we can prove the following improved lower bound on empirical covariance estimation, when $\tilde{O}(\sqrt{n}) \le d \le n^{4/3}$.

\begin{corollary}
    Suppose that $d \ge \tilde{O}(\sqrt{n})$. Then, any $(1, \frac{1}{d^2})$-DP algorithm that can learn the empirical covariance of any $X_1, \dots, X_n$ in the unit ball up to Frobenius error $\alpha$, with probability at least $3/4$, must satisfy $\alpha \ge \tilde{\Omega}(\frac{d}{n})$ when $d \le n^{2/3}$, and $\alpha \ge \tilde{\Omega}(\frac{1}{n^{1/3}})$ when $d \ge n^{2/3}$.
\end{corollary}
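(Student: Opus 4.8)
The plan is to specialize \Cref{cor:emp-cov} to the fixed privacy level $\eps = 1$, and then, in the high-dimensional regime $d \ge n^{2/3}$, to upgrade the resulting bound using the observation made just above that empirical covariance estimation only becomes harder as the ambient dimension grows.

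First I would handle the regime $\tilde{O}(\sqrt n) \le d \le n^{2/3}$ directly. The hypotheses of \Cref{cor:emp-cov} hold with $\eps = 1$: since $d \le n^{2/3}$ we have $d \le n/(\log n)^{O(1)}$; since $d \ge \tilde{O}(\sqrt n)$ we have $d \ge \sqrt n \cdot (\log n)^{O(1)}$; and $\delta \le 1/d^2 = \eps^2/d^2$. Hence any $(1,1/d^2)$-DP algorithm that learns the empirical covariance to Frobenius error $\alpha$ with probability at least $3/4$ satisfies $\alpha \ge \tilde{\Omega}\!\left(\min\!\left(\tfrac{1}{\sqrt d},\, \tfrac{d}{n}\right)\right)$. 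Because $d \le n^{2/3}$ is equivalent to $d^{3/2} \le n$, i.e.\ to $\tfrac{d}{n} \le \tfrac{1}{\sqrt d}$, the minimum equals $\tfrac{d}{n}$, giving $\alpha \ge \tilde{\Omega}(d/n)$, as claimed.

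For $d \ge n^{2/3}$ I would set $d_0 = \lceil n^{2/3}\rceil$, which lies in the range $\tilde{O}(\sqrt n) \le d_0 \le n^{2/3}$ once $n$ is a sufficiently large constant, and argue by dimension monotonicity. Given a $(1,1/d^2)$-DP algorithm $M$ with Frobenius error $\alpha$ in $d$ dimensions, I would embed a $d_0$-dimensional dataset from the unit ball into the first $d_0$ coordinates of $\BR^d$ (still in the unit ball), run $M$, and keep the top-left $d_0 \times d_0$ block of the output. The empirical covariance of the embedded data is supported on that block and coincides there with the $d_0$-dimensional empirical covariance, so truncating the output cannot increase the Frobenius error; and $1/d^2 \le 1/d_0^2$, so the resulting $d_0$-dimensional mechanism is $(1,1/d_0^2)$-DP with error at most $\alpha$. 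Plugging $d_0 = \lceil n^{2/3}\rceil$ into the first case then yields $\alpha \ge \tilde{\Omega}(d_0/n) = \tilde{\Omega}(1/n^{1/3})$.

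This is essentially bookkeeping, so I do not expect a genuine obstacle; the only points that need care are matching the polylogarithmic side conditions of \Cref{cor:emp-cov} (this is where the hypothesis $d \ge \tilde{O}(\sqrt n)$ and "$n$ sufficiently large" are used) and checking that the embedding degrades both privacy and accuracy in the favorable direction.
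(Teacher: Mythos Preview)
Your proposal is correct and follows essentially the same route as the paper: apply \Cref{cor:emp-cov} with $\eps=1$ in the range $\tilde O(\sqrt n)\le d\le n^{2/3}$ (where $\min(1/\sqrt d,\,d/n)=d/n$), and for $d\ge n^{2/3}$ invoke dimension monotonicity to reduce to $d_0\approx n^{2/3}$, noting that $(1,1/d^2)$-DP is automatically $(1,1/d_0^2)$-DP. The only nit is that $d_0=\lceil n^{2/3}\rceil$ need not be $\le n^{2/3}$; take $d_0=\lfloor n^{2/3}\rfloor$ (or just observe $d_0\le n^{2/3}+1$ still satisfies the hypotheses of \Cref{cor:emp-cov}).
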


\begin{proof}
    First, suppose $\tilde{O}(\sqrt{n}) \le d \le n^{2/3}$. In this case, we have that $d \le n/(\log n)^{O(1)}$, so we can apply~\Cref{cor:emp-cov} to say that $\alpha \ge \tilde{\Omega}\left(\min\left(\frac{1}{\sqrt{d}}, \frac{d}{n}\right)\right) = \tilde{\Omega}\left(\frac{d}{n}\right)$. Moreover, given any $(1, \frac{1}{d^2})$-DP algorithm that works in $d \ge n^{2/3}$ dimensions, it is also $(1, \frac{1}{(n^{2/3})^2})$-DP, and must work in $d = n^{2/3}$ dimensions. Therefore, the error must be at least $\alpha \ge \tilde{\Omega}(\frac{n^{2/3}}{n}) \ge \tilde{\Omega}\left(\frac{1}{n^{1/3}}\right)$.
\end{proof}

In combination with the known lower bounds for empirical covariance estimation~\cite{KasiviswanathanRSU10, KamathLSU19, DongLY22}, this implies that the error is at least $\tilde{\Omega}\left(\frac{d}{n}\right)$ for $d \le n^{2/3}$, $\tilde{\Omega}\left(\frac{1}{n^{1/3}}\right)$ for $n^{2/3} \le d \le n^{4/3}$, and $\tilde{\Omega}\left(\frac{\sqrt{d}}{n}\right)$ for $n^{4/3} \le d \le n^2$.

\section*{Acknowledgments}

I would like to thank Gautam Kamath and Argyris Mouzakis for helpful conversations and pointing me to some relevant references.

\newcommand{\etalchar}[1]{$^{#1}$}


\appendix

\section{Omitted Proofs} \label{sec:prelim-app}
\subsection{Omitted proofs from \Cref{sec:prelim}}

First, we prove \Cref{lem:wishart-operator-norm-bound}.

\begin{proof}[Proof of \Cref{lem:wishart-operator-norm-bound}]
    Note that $M^{-1} \sim W_d(I, m)$. By the proof of~\cite[Lemma 4.1]{ChenD05}, we have that
\[\BP\left(\lambda_{\min}(M^{-1}) \le \frac{m}{x} \right) \le \frac{1}{\Gamma(m-d+2)} \cdot \left(\frac{m}{\sqrt{x}}\right)^{m-d+1} = \frac{(m/\sqrt{x})^{m-d+1}}{(m-d+1)!}.\]
    It is well known that $n! \ge (n/e)^n$ for all positive integers $n$, so this is at least $(e/\sqrt{x})^{m-d+1}$. Therefore, since $\lambda_{\max}(M) = \lambda_{\min}(M^{-1})^{-1}$, we have $\BP(\lambda_{\max}(M) \ge \frac{x}{m}) \le (e/\sqrt{x})^{m-d+1} \le (e/\sqrt{x})^d = (e^2/x)^{d/2}$. Therefore, as long as $d/2 \ge 5k,$ we have that $m \cdot \lambda_{\max}(M)$ has its $k$th moment bounded by some constant $C_k$.

    Next, by \cite[Theorem 4.4.5]{VershyninBook}, if $A$ is a $m \times d$-dimensional matrix with i.i.d. $\cN(0, 1)$ entries, then $\|A\|_{op} \le O(\sqrt{m}+\sqrt{d})$ with high probability. Therefore, $(A^\top A)^{-1}$ has smallest eigenvalue at least $\Omega((\sqrt{m}+\sqrt{d})^{-2})$ with high probability. However, note that $(A^\top A)^{-1} \sim W_d^{-1}(I, m)$, and since $m \ge d$, $(\sqrt{m}+\sqrt{d})^{-2} \ge \frac{1}{4m}$. Thus, $\lambda_{\min}(M) \ge \Omega\left(\frac{1}{m}\right)$ with at least $2/3$ probability.
\end{proof}

Next, we note (and prove) the following auxiliary proposition.

\begin{proposition} \label{prop:frob_op_product}
    Let $P, J$ be symmetric matrices of the same size. Then, $\|J P J\|_F \le \|P\|_F \cdot \|J\|_{op}^2$.
\end{proposition}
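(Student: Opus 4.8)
The plan is to reduce the statement to the single sub-multiplicativity inequality
\[
\|AB\|_F \le \|A\|_{op} \cdot \|B\|_F,
\]
valid for any matrices $A, B$ of compatible dimensions, and then apply it twice.

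First I would prove this auxiliary inequality by a column-wise expansion. Writing $B = [b_1 \mid \cdots \mid b_n]$ in terms of its columns, we have $\|AB\|_F^2 = \sum_{j} \|A b_j\|_2^2 \le \sum_j \|A\|_{op}^2 \|b_j\|_2^2 = \|A\|_{op}^2 \|B\|_F^2$, and taking square roots gives the claim. Transposing and using $\|M\|_F = \|M^\top\|_F$ together with $\|M\|_{op} = \|M^\top\|_{op}$ yields the companion bound $\|AB\|_F \le \|A\|_F \cdot \|B\|_{op}$.

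With these in hand, the result is immediate by chaining: applying the first form to the outer factor $J$ and then the second form (equivalently, the first form applied to the transpose) to the inner product $PJ$,
\[
\|JPJ\|_F \le \|J\|_{op} \cdot \|PJ\|_F \le \|J\|_{op} \cdot \|P\|_F \cdot \|J\|_{op} = \|P\|_F \cdot \|J\|_{op}^2,
\]
where symmetry of $P$ and $J$ is not even needed. There is no real obstacle here; the only point requiring care is to derive the base inequality $\|AB\|_F \le \|A\|_{op}\|B\|_F$ from scratch (via the column expansion above) rather than circularly invoking it.
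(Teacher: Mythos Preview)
Your proof is correct and follows essentially the same approach as the paper: both establish $\|AB\|_F \le \|A\|_{op}\|B\|_F$ via the column-wise expansion and then apply it twice. The only cosmetic difference is that the paper uses the symmetry of $P,J$ to write $\|PJ\|_F = \|(JP)^\top\|_F = \|JP\|_F$ before reapplying the same inequality, whereas you derive the transposed companion bound directly and (rightly) observe that symmetry is unnecessary.
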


\begin{proof}
    For any (not necessarily symmetric) square matrix $X$ of the same size as $J$, note that $\|J X\|_F^2 = \sum \|J X_i\|_2^2,$ where $X_i$ is the $i$th column of $X$. This is at most $\sum_i \|J\|_{op}^2 \cdot \|X_i\|_2^2 = \|J\|_{op}^2 \cdot \|X\|_F^2$. Thus, $\|J X\|_F^2 \le \|J\|_{op}^2 \cdot \|X\|_F^2$.

    Therefore, $\|J P J\|_F^2 \le \|J\|_{op}^2 \cdot \|P J\|_F^2 = \|J\|_{op}^2 \cdot \|J P\|_F^2 \le \|J\|_{op}^4 \cdot \|P\|_F^2,$ where the middle equality holds because $PJ = (JP)^\top$ when $J, P$ are symmetric. Since operator norms and Frobenius norms are always positive, we thus have $\|J P J\|_F \le \|J\|_{op}^2 \cdot \|P\|_F.$
\end{proof}

We can now prove Proposition~\ref{prop:4th-moment}.

\begin{proof}[Proof of \Cref{prop:4th-moment}]
    First, assume that $P$ is symmetric. Write $X = \Sigma^{1/2} Y$, where $Y \sim \cN(0, I)$. We can write
\begin{align*}
    \langle P, X X^\top - \Sigma \rangle &= \Tr(P \cdot X X^\top - P \Sigma)\\
    &= \Tr(P \cdot \Sigma^{1/2} Y Y^\top \Sigma^{1/2} - P \Sigma)\\
    &= \Tr((\Sigma^{1/2} P \Sigma^{1/2}) \cdot (Y Y^\top-I))\\
    &= \langle \Sigma^{1/2} P \Sigma^{1/2}, Y Y^\top - I \rangle.
\end{align*}
    If we write $Q := \Sigma^{1/2} P \Sigma^{1/2}$, then $Q$ is symmetric, so $\langle Q, Y Y^\top - I \rangle = \sum_i Q_{ii} (Y_i^2-1) + \sum_{i < j} Q_{ij} (2Y_i Y_j)$. Note that each $Y_i^2-1$ and $2Y_i Y_j$ has mean $0$, variance $2$, and are pairwise uncorrelated (though not necessarily independent). Therefore, $\langle Q, Y Y^\top - I \rangle$ has expectation $0$ and variance at most $2\cdot \sum_{i, j} Q_{i, j}^2 = 2 \cdot \|\Sigma^{1/2} P \Sigma^{1/2}\|_F^2$. Then, by Proposition \ref{prop:frob_op_product}, this is at most $2 \cdot \|\Sigma\|_{op}^2 \cdot \|P\|_F^2$.

    Finally, if $P$ is not symmetric, let $P' = \frac{P+P^\top}{2}$. Note that $\langle P, XX^\top - \Sigma \rangle = \langle P', XX^\top - \Sigma \rangle$ but $\|P'\|_F^2 \le \|P\|_F^2$. So, the claim still holds.
\end{proof}

Next, we prove \Cref{prop:conc_bound}.

\begin{proof}[Proof of \Cref{prop:conc_bound}]
    Note that $X = \Sigma^{1/2} \cdot Z$, where $Z \sim \cN(0, I)$. Note that $\|X\|_2 \le \|\Sigma\|_{op}^{1/2} \cdot \|Z\|_2$. Also, for any integer $C \ge 2$, $\BP(\|Z\|^2 \ge C \cdot d) \le e^{-c \cdot (C-1) d} \le e^{-c/2 \cdot C \cdot d} \le e^{-c/2 \cdot C \cdot d}.$ So, $\BP(\|X\|^2 \ge C \cdot \|\Sigma\|_{op} \cdot d) \le e^{-c/2 \cdot C}$, so by Lemma~\ref{lem:hanson-wright},
\[\BP\left(\frac{\|X\|^2}{C \cdot \|\Sigma\|_{op} \cdot d}\right) \le e^{-c/2 \cdot C}.\]
    Hence, $\frac{\|X\|^2}{C \cdot \|\Sigma\|_{op} \cdot d}$ has an exponential tail bound with an absolute constant, so $\BE\left[\left(\frac{\|X\|^2}{C \cdot \|\Sigma\|_{op} \cdot d}\right)^k\right] \le c_k$ for some constant $c_k$, for any integer $k \ge 1$. By rearranging,the proof is complete.
\end{proof}

Finally, we prove \Cref{prop:dumb_calculation}.

\begin{proof}[Proof of \Cref{prop:dumb_calculation}]
    Assume that $X, Y$ have finite probability density functions $p_X$ and $p_Y$. (This assumption is WLOG since we may add an arbitrarily small Gaussian to $X$ and $Y$ for smoothing.) Then, since $1-\eps \le e^{-\eps}$ and $e^{\eps} \le 1+2\eps$ for $\eps \le 1$, we can write $p_Y(x) = p_X(x) \cdot (1 + a(x)) + b(x)$, where $|a(x)| \le 2 \eps$ for all $x$ and $\int_{-\infty}^{\infty} |b(x)| dx \le \delta$. Now, we can write
\[\BE[X-Y] = \int_{-\infty}^{\infty} x \cdot (p_X(x) \cdot a(x) + b(x)) dx,\]
    which in absolute value is bounded by
\begin{equation} \label{eq:dumb_calculation_1}
    \int_{-\infty}^{\infty} p_X(x) \cdot |x| \cdot 2 \eps dx + \int_{-\infty}^{\infty} |x| \cdot |b(x)| dx = 2 \eps \cdot \BE[|X|] + \int_{-\infty}^{\infty} |x| \cdot |b(x)| dx.
\end{equation}
    Since $b(x) = p_Y(x)-(p_X(x) \cdot (1+a(x))),$ we can use Cauchy-Schwarz to bound 
\begin{align}
    \int_{-\infty}^{\infty} |x| \cdot |b(x)| dx &\le \sqrt{\int_{-\infty}^{\infty} |b(x)| dx \cdot \int_{-\infty}^{\infty} x^2 \cdot |b(x)| dx} \nonumber \\
    &\le \sqrt{\delta \cdot \int_{-\infty}^{\infty} x^2 \cdot (p_Y(x) - p_X(x) \cdot (1+a(x))) dx} \nonumber \\
    &\le \sqrt{\delta \cdot \int_{-\infty}^{\infty} x^2 \cdot p_Y(x) dx + \int_{-\infty}^{\infty} x^2 \cdot p_X(x) \cdot (1+2\eps) dx} \nonumber \\
    &= \sqrt{\delta \cdot (\BE[Y^2] + (1+2 \eps) \cdot \BE[X^2])}. \label{eq:dumb_calculation_2}
\end{align}
    By combining Equations \eqref{eq:dumb_calculation_1} and \eqref{eq:dumb_calculation_2}, and since $\eps \le 1$, this completes the proof.
\end{proof}

\subsection{Reduction of Assumptions} \label{subsec:assumptions}

We describe how to convert an algorithm $M$ satisfying \eqref{eq:assumption} into an algorithm $M$ satisfying \eqref{eq:assumption-stronger}, at the cost of a slight increase in the number of samples.

\begin{lemma}
    Suppose that $M$ is an $(\eps, \delta)$-DP algorithm, that uses $n$ samples and satisfies \eqref{eq:assumption}.
    Moreover, for some constants $c_2, C_4,$ assume that $e^{-c_2 d} \le \gamma \le c_2 \sqrt{d}$, and $d \ge C_4$.
    Then, under the prior $p_0 \sim W_d^{-1}(\frac{I}{2d}, 2d)$, there exists an algorithm $\overline{M}$ that is $(\eps, \delta)$-DP, uses $O(n \log(d/\gamma))$ samples, and satisfies \eqref{eq:assumption-stronger}.
\end{lemma}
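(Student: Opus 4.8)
The plan is a ``repeat, cluster, and truncate'' boosting argument. First I would take $T = \Theta(\log(d/\gamma))$ (with a large enough hidden constant), have $\overline M$ split its $N = Tn$ input samples into $T$ disjoint blocks of size $n$, run $M$ separately on each block to obtain estimates $\hat\Sigma_1,\dots,\hat\Sigma_T$, and then post-process this tuple. Since every input record lies in exactly one block, a neighboring input changes exactly one of the $\hat\Sigma_j$, so by parallel composition the tuple $(\hat\Sigma_1,\dots,\hat\Sigma_T)$ is $(\eps,\delta)$-DP; hence any data-independent post-processing of it --- in particular the final $\overline M$ --- is $(\eps,\delta)$-DP, with no loss in the privacy parameters, and $\overline M$ plainly uses $O(n\log(d/\gamma))$ samples. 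For the post-processing I would use the usual clustering trick: output $\hat\Sigma_{j^\ast}$ where $j^\ast$ is the least index with $|\{k:\|\hat\Sigma_{j^\ast}-\hat\Sigma_k\|_F\le 2\gamma\}|\ge 0.6\,T$, except that if no such index exists, or if $\|\hat\Sigma_{j^\ast}\|_F > R$ for a threshold $R=\Theta(\sqrt d)$, then $\overline M$ instead outputs $0$. The clipping is what prevents the rare bad runs from blowing up the fourth moment.

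Next I would establish accuracy by conditioning on $\Sigma$. When $\|\Sigma\|_{op}\le 10$, assumption~\eqref{eq:assumption} makes each block ``good'' (meaning $\|\hat\Sigma_j-\Sigma\|_F\le\gamma$) independently with probability at least $2/3$, so by a Chernoff bound at least $0.6\,T$ blocks are good except with probability $e^{-\Omega(T)}$, which is at most $(\gamma/d)^{100}$ once the constant in $T$ is large enough. On that event: any good index has $\ge 0.6\,T$ indices within $2\gamma$ of it (all good estimates are within $2\gamma$ of each other by the triangle inequality through $\Sigma$), so a valid $j^\ast$ exists; and since the $\ge 0.6\,T$ indices near $\hat\Sigma_{j^\ast}$ and the $\ge 0.6\,T$ good indices must overlap, there is a good $k$ with $\|\hat\Sigma_{j^\ast}-\hat\Sigma_k\|_F\le 2\gamma$, whence $\|\hat\Sigma_{j^\ast}-\Sigma\|_F\le 3\gamma$ and (using $\gamma\le c_2\sqrt d$) $\hat\Sigma_{j^\ast}$ is not clipped, so $\|\overline M(X)-\Sigma\|_F^4\le (3\gamma)^4$. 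On the complementary event, and whenever $\|\Sigma\|_{op}>10$, the clip gives the deterministic bound $\|\overline M(X)-\Sigma\|_F\le R+\|\Sigma\|_F\le O(\sqrt d)(1+\|\Sigma\|_{op})$, hence $\|\overline M(X)-\Sigma\|_F^4\le O(d^2)(1+\|\Sigma\|_{op}^4)$.

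It then remains to take expectations over $\Sigma\sim p_0$, $X$, and $\overline M$. The $\|\Sigma\|_{op}\le 10$ contribution is at most $(3\gamma)^4 + O(d^2)\cdot(\gamma/d)^{100}=O(\gamma^4)$. For the $\|\Sigma\|_{op}>10$ contribution I would bound $\BE[(1+\|\Sigma\|_{op}^4)\,\BI[\|\Sigma\|_{op}>10]]$ by Cauchy--Schwarz using $\BP_{\Sigma\sim p_0}(\|\Sigma\|_{op}>10)\le (e^2/10)^{d/2}$ from~\eqref{eq:prior-bound-1} and $\BE[\|\Sigma\|_{op}^8]=O(1)$ from \Cref{lem:wishart-operator-norm-bound} (valid once $d\ge C_4$), giving a bound of $O(d^2)\,e^{-\Omega(d)}$. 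The hard part --- and essentially the only place the hypotheses $d\ge C_4$ and $\gamma\ge e^{-c_2 d}$ with $c_2$ small get used --- is to check that this $O(d^2)\,e^{-\Omega(d)}$ is $\le\gamma^4$: since $\gamma^4\ge e^{-4c_2 d}$, one needs the decay rate $e^{-\Omega(d)}$ coming from the inverse-Wishart tail to dominate $e^{-4c_2 d}$, which forces $c_2$ below an absolute constant (and $C_4$ above one, to absorb the $\log$ terms). Combining the two contributions gives $\BE_{\Sigma,X,\overline M}[\|\overline M(X)-\Sigma\|_F^4]=O(\gamma^4)$, i.e.~\eqref{eq:assumption-stronger} up to an absolute constant factor (equivalently, after replacing $\gamma$ by $O(\gamma)$ and correspondingly shrinking $c_2$), which is all that \Cref{thm:main} needs. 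Everything other than this final tail estimate --- the boosting combinatorics and the privacy bookkeeping --- is routine.
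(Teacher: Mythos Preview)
Your proposal is correct and follows essentially the same approach as the paper: split into $\Theta(\log(d/\gamma))$ disjoint blocks, run $M$ on each, robustly aggregate, clip at a $\Theta(\sqrt d)$ Frobenius threshold, and bound the fourth moment by splitting on $\|\Sigma\|_{op}\le 10$ versus $\|\Sigma\|_{op}>10$, using Chernoff for the first case and Cauchy--Schwarz with the inverse-Wishart tail for the second. The only substantive difference is the aggregation step --- the paper takes the coordinate-wise median of the $M(X^{(i)})$ (citing a known guarantee that this lies within $2\gamma$ of $\Sigma$ once a $2/3$ fraction of the estimates do), whereas you use the clustering/voting rule and get $3\gamma$; both are standard robust-aggregation primitives and the difference is cosmetic.
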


\begin{proof}
For some parameter $L = O(\log d/\gamma)$, we can obtain $n \cdot L$ samples, which are split into $L$ batches $X^{(1)}, \dots, X^{(L)}$ of $n$ points each. We can compute $M(X^{(i)})$ for each $i$, and if $\|\Sigma\|_{op} \le 10$, from these points we can algorithmically find a value $\tilde{\Sigma}$ which is within distance $2 \gamma$ in Frobenius distance from $\Sigma$ with probability at least $1-e^{-\Omega(L)}$. 

To explain how to find this $\tilde{\Sigma}$, note that by a Chernoff bound, with probability at least $1-e^{-\Omega(L)}$, at least $\frac{2}{3} \cdot L$ values $M(X^{(i)})$ are within $\gamma$ in Frobenius distance from $\Sigma$. In this case, taking the coordinate-wise median of the data points $\{M(X^{(i)})\}_{i=1}^L$ is known to satisfy the guarantees~\cite{Narayanan18}.

Our final modified algorithm $\overline{M}$ takes in $L \cdot n$ samples, computes $\tilde{\Sigma}$ as the coordinate-wise median of $M(X^{(1)}), \dots, M(X^{(L)})$, and equals $\tilde{\Sigma}$ if $\|\tilde{\Sigma}\|_{F} \le 20 \sqrt{d}$, and the $0$ matrix otherwise. Since we used fresh samples in each repetition and the final output is a deterministic function of $M(X^{(1)}), \dots, M(X^{(L)}),$ the algorithm maintains the same privacy guarantees.

Finally, we consider the guarantees we have on $\overline{M}(X)$. If $\|\Sigma\|_{op} \le 10$, then with probability at least $1-e^{-\Omega(L)}$, $\|\tilde{\Sigma}-\Sigma\|_F \le 2 \gamma$. In this case, $\|\tilde{\Sigma}\|_{F} \le \sqrt{d} \cdot \|\Sigma\|_{op}+2\gamma \le 20 \sqrt{d}$, assuming $\gamma \le \sqrt{d}$. This means that $\overline{M}(X) = \tilde{\Sigma}$, so $\|\overline{M}(X)-\Sigma\|_F \le 2 \gamma$. Moreover, regardless of $\Sigma$ and the randomness of $X$ and $M$, we always have $\|\overline{M}(X)\|_{F} \le 20 \sqrt{d}$, so $\|\overline{M}(X)-\Sigma\|_{F} \le 20 \sqrt{d} + \|\Sigma\|_F \le \sqrt{d} \cdot (20+\|\Sigma\|_{op})$.
Therefore, if $\|\Sigma\|_{op} \le 10,$ we can bound 
\begin{align*}
    \BE_{X, \overline{M}}[\|\overline{M}(X)-\Sigma\|_F^4|\Sigma] 
    &\le (2 \gamma)^4 + e^{-\Omega(L)} \cdot (\sqrt{d}(\|\Sigma\|_{op}+20))^4 \\
    &\le O(\gamma^4 + e^{-\Omega(L)} d^2).
\end{align*}
Since we set $L$ to be a sufficiently large multiple of $\log (\frac{d}{\gamma})$ (note that even if $\gamma > 1$, $\frac{d}{\gamma} \ge \sqrt{d}$ still holds), we have that $e^{-\Omega(L)} \le \frac{\gamma^4}{d^2}$, so $\BE_{X, \overline{M}}[\|\overline{M}(X)-\Sigma\|_F^4|\Sigma] \le O(\gamma^4)$ if $\|\Sigma\|_{op} \le 10$.

Alternatively, if $\|\Sigma\|_{op} \ge 10,$ we still have that $\|\overline{M}(X)-\Sigma\|_{F} \le \sqrt{d}(\|\Sigma\|_{op} + 20) \le 3 \sqrt{d} \cdot \|\Sigma\|_{op}$. Thus, $\|\overline{M}(X)-\Sigma\|_{F}^4 \le O(d^2 \cdot \|\Sigma\|_{op}^4)$.

Hence, $\BE_{X, \overline{M}}(\|\overline{M}(X)-\Sigma\|_F^4) \le O(\gamma^4) + O(d^2) \cdot \BE[\|\Sigma\|_{op}^4 \cdot \BI[\|\Sigma\|_{op} \ge 10]]$. By Cauchy-Schwarz, we can bound 
\[\BE[\|\Sigma\|_{op}^4 \cdot \BI[\|\Sigma\|_{op} \ge 10]] \le \sqrt{\BE[\|\Sigma\|_{op}^8] \cdot \BP(\|\Sigma\|_{op} \ge 10)}.\]
By our assumption on the prior $p_0 \sim W_d^{-1}(\frac{I}{m}, m)$, Lemma~\ref{lem:wishart-operator-norm-bound} implies that $\BE[\|\Sigma\|_{op}^8] \le O(1)$ and $\BP(\|\Sigma\|_{op} \ge 10) \le (e^2/10)^{d/2} \le e^{-\Omega(d)}$. Hence, we can bound $\BE_{X, \overline{M}}(\|\overline{M}(X)-\Sigma\|^4) \le O(\gamma^4 + e^{-\Omega(d)}) \le O(\gamma^4)$, assuming that $\gamma \ge e^{-c_2 d}$ for some small constant $c_2$.
\end{proof}

\end{document}